\documentclass{birkjour}
\usepackage[utf8]{inputenc}
\usepackage{amssymb}
\usepackage{graphicx}
\usepackage[hidelinks]{hyperref}

\renewcommand{\subjclassname}{Mathematics Subject Classification (2020)}

\newtheorem{theorem}{Theorem}
\newtheorem{proposition}[theorem]{Proposition}
\newtheorem{lemma}[theorem]{Lemma}
\newtheorem{corollary}[theorem]{Corollary}
\theoremstyle{definition}

\theoremstyle{remark}
\newtheorem{remark}[theorem]{Remark}

\newcommand{\fl}[1]{\lfloor #1 \rfloor}
\newcommand{\frc}[1]{\{ #1 \}}

\begin{document}

\title[Beatty solutions of almost Golomb equations]
      {Beatty solutions of almost Golomb\br
       functional equations}

\author[B. Cloitre]{Beno\^{\i}t Cloitre}


\thanks{ORCID:
\href{https://orcid.org/0009-0001-6778-153X}{0009-0001-6778-153X}}

\subjclass{Primary 39B12. Secondary 11B83, 11J71, 37B10, 68R15}

\keywords{Beatty sequence, almost Golomb sequence,
iterative functional equation, Sturmian word,
Pell tower, Ostrowski numeration, primitive substitution}

\date{July 2026}

\begin{abstract}
\noindent
We study the almost Golomb equation of order~$r$,
\begin{equation*}
  a\bigl(S(n)\bigr)=n,\qquad
  S(n)=a(n)+a(n{-}1)+\cdots+a(n{-}r{+}1),
\end{equation*}
for nondecreasing sequences of positive integers. Its greedy
solution is $r$-regular in the sense of Allouche and Shallit.
Beyond that one, and for every non-square order~$r$, the
equation has another solution, an inhomogeneous
Beatty sequence $a(n)=\fl{n/\!\sqrt{r}+d}$ where~$d$ is a
parameter. No other positive slope occurs, and the equation
holds for all $n\ge r$ exactly when~$d$ lies in an explicit
closed interval depending on~$r$. That interval is a single
point when $r=2$ and has positive length for every non-square
$r\ge 3$.

Iterating the equation gives, for $k\ge 1$,
\begin{equation*}
  a^{\circ k}\bigl(S(n)\bigr)=a^{\circ(k-1)}(n),
\end{equation*}
with $a^{\circ 0}$ the identity. Its Beatty solutions of
positive slope are again of the form $\fl{n/\!\sqrt{r}+d}$,
and the sets of admissible~$d$ form an increasing chain.
The equation for~$k$ holds exactly when $a^{\circ(k-1)}$
agrees at~$n$ and at $a(S(n))$. We determine these solutions
for $k=2$ when $r=2$ and $r=3$. At the right endpoint of the
$r=2$ interval the first equation fails on a thin set of
indices, which we identify as the return times of an
irrational rotation to an explicit interval. The proofs
combine equidistribution with an exact computation in
$\mathbb{Z}[\sqrt{r}]$ over a finite range of orders.
\end{abstract}

\maketitle

\section{Introduction}\label{sec:intro}

\subsection{The almost Golomb equation}

Golomb's sequence~\cite{Golomb}
(\href{https://oeis.org/A001462}{\texttt{A001462}} in the
OEIS~\cite{OEIS}) is the unique
nondecreasing sequence of positive integers in which~$n$
appears exactly $a(n)$ times. It satisfies the equation
$a(\sum_{k=1}^{n}a(k))=n$ and grows like
$a(n)\sim\phi^{2-\phi}n^{\phi-1}$,
where $\phi=(1+\sqrt{5})/2$ is the golden ratio.

Replacing the cumulative sum by a sliding
window of size~$r$ leads to the
\emph{almost Golomb equation of order~$r$}
\begin{equation}\label{eq:AG}
  a\Bigl(\sum_{j=0}^{r-1}a(n{-}j)\Bigr)=n,
  \qquad n\ge 1,
\end{equation}
with $a(k)=0$ for $k<1$.
The earliest nondecreasing solution, called the
\emph{almost Golomb sequence} of order~$r$
in~\cite{Cloitre_AG}, is obtained inductively:
$a(n)$ is the smallest integer $\ge a(n{-}1)$
consistent with~\eqref{eq:AG}.
This finite-memory truncation changes the nature
of the sequence. The smooth power law
gives way to oscillatory linear growth, and the
greedy solution is $r$-regular in the sense of
Allouche and Shallit~\cite{AS} for every
$r\ge 2$~\cite{Cloitre_AG}.
Writing $S(n)=\sum_{j=0}^{r-1}a(n{-}j)$ for the
sliding-window sum, a notation used throughout,
equation~\eqref{eq:AG} reads $a\circ S=\mathrm{id}$.

Equation~\eqref{eq:AG} is \emph{implicit}. It
constrains~$a$ at position $S(n)$, which depends
on~$a$ itself and typically lies far beyond~$n$.
This self-reference is what leaves room for
solutions other than the greedy one.

\subsection{The greedy solution}

The greedy (earliest) solution of~\eqref{eq:AG}
is studied in detail in the companion
paper~\cite{Cloitre_AG}. It is unique, $r$-regular,
and has explicit $r$-adic denesting
formulas. For $r=2$ (sequence
\href{https://oeis.org/A394217}{\texttt{A394217}}), it begins
\[
  1,\, 2,\, 2,\, 3,\, 4,\, 4,\, 5,\, 6,\, 7,\, 7,\,
  8,\, \mathbf{8},\, 9,\, 10,\, 11,\, 12,\, 13,\,
  13,\, 14,\, 14,\, \ldots
\]
and satisfies the dyadic recurrences
$a(2n)=a(n)+a(n{+}1)-1$ and
$a(2n{+}1)=a(n)+a(n{+}1)$~\cite{Cloitre_AG}.
Its ratio $a(n)/n$ has the two limit points $2/3$
and~$3/4$, and does not converge~\cite{Cloitre_AG}.

\subsection{Motivation and related work}

Beatty sequences are a classical source of identities
involving composition. The lower and upper Wythoff
sequences $A$
(\href{https://oeis.org/A000201}{\texttt{A000201}}) and $B$
(\href{https://oeis.org/A001950}{\texttt{A001950}}),
\[
  A(n)=\fl{n\tfrac{1+\sqrt{5}}{2}},
  \qquad
  B(n)=\fl{n\tfrac{3+\sqrt{5}}{2}},
\]
are complementary and satisfy many equations built from
iterates and mixed compositions, among them
$A(A(n))=B(n)-1$. Every word in $A$ and~$B$, read as a
composition, reduces to a linear combination of $A$, $B$,
and~$1$ whose coefficients on $A$ and~$B$ are consecutive
Fibonacci numbers~\cite{KimberlingWythoff}. Hofstadter's
sequence $G(n)=n-G(G(n{-}1))$
(\href{https://oeis.org/A005206}{\texttt{A005206}}) is
itself a Beatty sequence,
equal to $\fl{(n{+}1)(\sqrt{5}-1)/2}$, and nested
recurrences of this kind are connected to morphic
words~\cite{CelayaRuskey}. Golomb's name is attached to
several other nested recurrences. The recursion
$b(b(n)+kn)=2b(n)+kn$ with $k$ a positive integer has many
increasing solutions for each~$k$~\cite{BarbeauTanny}, and
the solution space of $g(n)=g(n{-}g(n{-}1))+1$ has been
described in~\cite{SunoharaTanny}. The occurrence of a
Beatty sequence as a solution of a nested recurrence is
therefore not what is at issue here.

What equation~\eqref{eq:AG} adds is the shape of its Beatty
solution set. The equation is fixed before any Beatty form
is assumed, and a Beatty solution of positive slope is then
forced to have slope $1/\!\sqrt{r}$, at every order and for
every member of the iterated family, by
Theorem~\ref{thm:slope}. The shift is not forced. For every
non-square $r\ge 3$ an entire closed interval of shifts
solves the same equation in the full-window range $n\ge r$,
and distinct shifts give distinct sequences. In the Wythoff
setting the two slopes are determined by the
complementarity, and families of Beatty pairs are obtained
by varying the game~\cite{Larsson}.
Nested recurrences of Golomb type admitting solutions given
by Beatty functions have likewise been obtained by varying
the recurrence~\cite{IsgurKimMilcakTanny}. Here one
fixed equation carries a continuum of solutions at once,
next to the $r$-regular greedy solution.

The shift is the phase of the underlying irrational rotation.
Changing it leaves the difference word mechanical of the same
slope but moves its intercept, hence the positions of the
repeated values and the indices at which the sequence
evaluates itself. The length of the shift interval is the
margin left by the oscillation of the sawtooth divided by
$\sqrt{r}(\sqrt{r}+1)$, so it measures how far the phase may
move while the equation still holds at every index. Outside
that interval the two sides of the equation differ at some
indices, and the composed equations ask instead that an
iterate of the sequence take the same value at both.
The admissible
shift sets of the composed equations therefore increase with
the number of compositions. At the first step the increase is
proper for $r=2$ and for $r=3$, where a single point and an
interval are replaced by strictly larger intervals. Whether
it is proper at other orders, and whether the sets become
constant, are open questions taken up in
Section~\ref{sec:open}.

The same reading applies to other self-referential
recurrences. One may look first for a continuous solution,
read off the slope it forces, measure the defect that
discretization introduces, and then ask for which phases that
defect vanishes or is absorbed after finitely many
compositions. The equation studied here carries two kinds of
branch under this reading, the $r$-regular greedy solution
and the Sturmian family, and at the upper endpoint of that
family when $r=2$ the Sturmian branch carries in addition a
defect set whose gap sequence generates the minimal subshift
of a primitive substitution.

\subsection{Beatty branches and continuous shift families}

In this paper we show that, once the greedy constraint
is removed, the same implicit equation admits a continuum
of monotone solutions of a different nature in the
full-window range $n\ge r$. For every non-square $r\ge 3$
an entire interval of Beatty shifts solves it there, and
for $r=2$ such an interval appears after one further
composition. We begin with the canonical $r=2$ branch.

\begin{theorem}\label{thm:canonical}
The inhomogeneous Beatty sequence
\begin{equation}\label{eq:canonical}
  a(n)=\Bigl\lfloor\frac{n+1}{\sqrt{2}}\Bigr\rfloor,
  \qquad n\ge 1,
\end{equation}
with $a(k)=0$ for $k<1$,
satisfies the almost Golomb equation of order~$2$:
\begin{equation}\label{eq:strong}
  a\bigl(a(n)+a(n{-}1)\bigr)=n
  \qquad\text{for all } n\ge 1.
\end{equation}
\end{theorem}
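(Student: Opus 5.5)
The plan is a direct computation with fractional parts: write both window terms in terms of a single rotation variable, then check that the resulting argument of the outer floor always lies in $[n,n+1)$. Put $\alpha=1/\sqrt{2}$, so that $a(n)=\fl{(n+1)\alpha}$ for $n\ge 1$.

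First I check that the convention $a(k)=0$ for $k<1$ agrees with the formula at $k=0$, since $\fl{\alpha}=0$. Hence $a(n-1)=\fl{n\alpha}$ for every $n\ge 1$, including $n=1$. I also check that the outer argument $m=a(n)+a(n-1)$ is at least $a(1)=1$, so that $a(m)$ is given by the formula \eqref{eq:canonical}.

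Next I introduce fractional parts $y=\frc{n\alpha}$ and $x=\frc{(n+1)\alpha}$. Then
\[
m=(2n+1)\alpha-x-y .
\]
Using $2\alpha^2=1$, this gives
\[
(m+1)\alpha=n+\tfrac12+\alpha\,(1-x-y).
\]
So \eqref{eq:strong}, namely $\fl{(m+1)\alpha}=n$, is equivalent to
\[
-\alpha\le 1-x-y<\alpha .
\]
It remains to bound $1-x-y$, which I do by splitting according to whether the rotation by $\alpha$ wraps around.

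If $y<1-\alpha$, then $x=y+\alpha$ and $1-x-y=1-\alpha-2y$. As $y$ ranges over $[0,1-\alpha)$, this lies in $(\alpha-1,\,1-\alpha]$. Since $1-\alpha\approx 0.29<\alpha$, this interval sits strictly inside $[-\alpha,\alpha)$.

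If $y\ge 1-\alpha$, then $x=y+\alpha-1$ and $1-x-y=2-\alpha-2y$. As $y$ ranges over $[1-\alpha,1)$, this lies in $(-\alpha,\alpha]$.

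The only delicate point, and the step I expect to need care, is the right endpoint $\alpha$ in the second case. That value occurs exactly when $y=1-\alpha$, i.e.\ $\frc{n\alpha}=1-\alpha$. This would make $(n+1)\alpha$ an integer, which is impossible for $n\ge 0$ because $\alpha$ is irrational.

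Hence the inequality $-\alpha\le 1-x-y<\alpha$ holds strictly inside the required range for every $n\ge 1$, and \eqref{eq:strong} follows. No equidistribution is needed here. The argument also explains why the shift $d=1/\sqrt{2}$ is tight: the second case uses the full interval $(-\alpha,\alpha]$, with its endpoint avoided only by irrationality. This is consistent with the admissible interval for $r=2$ reducing to a single point.
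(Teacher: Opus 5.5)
Your proof is correct and is essentially the paper's argument specialized to $r=2$. Your condition $-\alpha\le 1-x-y<\alpha$ (with your $\alpha=1/\sqrt2$) is Proposition~\ref{prop:reduction} with $\Phi(n)=x+y$, and your two cases recover the paper's treatment of the extremal case $r=2$: $\sup\Phi=(2+\sqrt2)/2$ is approached but not attained, while $\inf\Phi=(2-\sqrt2)/2$ is attained only at the jump point $y=1-\alpha$, which irrationality excludes. You also note that $\fl{\alpha}=0$ agrees with the padding convention, so you cover $n=1$ in the same argument, whereas the paper's general theorem is stated for $n\ge r$ and leaves $n=1$ to a direct computation.
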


This Beatty solution begins
\[
  1,\, 2,\, 2,\, 3,\, 4,\, 4,\, 5,\, 6,\, 7,\, 7,\,
  8,\, \mathbf{9},\, 9,\, 10,\, 11,\, 12,\, 12,\,
  13,\, 14,\, 14,\, \ldots
\]
Writing $u(n)=\fl{n/\!\sqrt{2}}$ for
\href{https://oeis.org/A049472}{\texttt{A049472}}, this
solution is $a(n)=u(n{+}1)$.
The two solutions agree on $[1,11]$ and diverge at
$n=12$, where the greedy repeats the value~$8$ (run of
length~$2$), while the Beatty moves to~$9$ (run of
length~$1$). This is possible because the constraint
at $n=12$ applies at position
$S(12)=a(12)+a(11)$, which is $16$ for the greedy
and~$17$ for the Beatty. Both sequences thus satisfy
the equation at $n=12$, but with the constraint
applied at different positions, allowing divergence
of the two solutions from that point on.

The two solutions also differ asymptotically, since the
Beatty ratio $a(n)/n$ converges to~$1/\!\sqrt{2}$ while the
greedy ratio does not converge.

Theorem~\ref{thm:canonical} is the case $r=2$
of the following general result.

\begin{theorem}\label{thm:universal}
Let $r\ge 2$ and define the order-$r$ Beatty
sequence
\begin{equation}\label{eq:general_beatty}
  a(n)=\Bigl\lfloor
  \frac{n}{\sqrt{r}}+\frac{\sqrt{r}}{2}
  \Bigr\rfloor,
\end{equation}
and set $S(n)=\sum_{j=0}^{r-1}a(n{-}j)$. Then, for all
$n\ge r$,
\[
  a\bigl(S(n)\bigr)=
  \begin{cases}
    n+1, & \text{if $r$ is an even perfect square},\\
    n,   & \text{otherwise}.
  \end{cases}
\]
\end{theorem}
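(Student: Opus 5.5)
The plan is to reduce the statement to a bound on a sum of fractional parts. Put $\alpha=1/\!\sqrt{r}$ and $\theta_j=(n-j)\alpha+\sqrt{r}/2$ for $0\le j\le r-1$, so that $a(n-j)=\theta_j-\frc{\theta_j}$; all these indices are $\ge 1$ because $n\ge r$. Summing the arguments gives
\[
  \sum_{j=0}^{r-1}\theta_j
  = rn\alpha-\alpha\tfrac{r(r-1)}{2}+\tfrac{r\sqrt{r}}{2}
  = n\sqrt{r}+\tfrac{\sqrt{r}}{2}.
\]
Hence $S(n)=n\sqrt{r}+\sqrt{r}/2-E(n)$ with $E(n)=\sum_j\frc{\theta_j}$. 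Substituting,
\[
  a\bigl(S(n)\bigr)=\Bigl\lfloor n+\tfrac12+\tfrac{\sqrt r}{2}-\tfrac{E(n)}{\sqrt r}\Bigr\rfloor ,
\]
so $a(S(n))=n$ exactly when
\[
  \tfrac{r-\sqrt r}{2}<E(n)\le\tfrac{r+\sqrt r}{2}.
\]
Equivalently, the centred sawtooth sum $D(n)=\sum_j\bigl(\frc{\theta_j}-\tfrac12\bigr)$ must satisfy $-\sqrt r/2<D(n)\le\sqrt r/2$. The whole theorem is therefore an estimate on the discrepancy of $r$ consecutive points of the rotation by $\alpha$.

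The perfect-square case is settled by an exact computation. If $r=s^2$, then $\alpha=1/s$ and the $r$ points $\theta_j$ form $s$ complete residue systems $\{x+k/s\}_{k=0}^{s-1}$ modulo~$1$, with $x=\theta_0$. The standard identity $\sum_{k=0}^{s-1}\frc{x+k/s}=\frc{sx}+(s-1)/2$ then gives $E=s\frc{sx}+(r-s)/2$. Here $sx=n+s^2/2$.
\begin{itemize}
\item If $s$ is odd, then $\frc{sx}=1/2$, so $E=r/2$, which lies strictly inside the window, and $a(S(n))=n$.
\item If $s$ is even, then $\frc{sx}=0$, so $E=(r-s)/2$. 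This is exactly the excluded left endpoint, and the floor equals $n+1$.
\end{itemize}
This accounts for the case distinction in the statement.

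For non-square $r$ we must show $|D(n)|<\sqrt r/2$; irrationality of $\sqrt r$ rules out equality at both endpoints. As a function of the phase, $D$ is piecewise linear with slope $r$ and unit downward jumps where some $\theta_j$ crosses an integer. The approach is to exploit the reflection symmetry $\theta_j+\theta_{r-1-j}=\text{const}$ and the oddness of the sawtooth $t\mapsto\frc{t}-\tfrac12$. We write $r=q^2+m$ with $q=\lfloor\sqrt r\rfloor$ and $1\le m\le 2q$. The $r$ points split into about $q$ blocks of $\approx\sqrt r$ consecutive points, each block nearly equidistributed in $[0,1)$ with step $\alpha$. For a block of $N$ points with step $1/N'$ where $N'\approx N$, the sawtooth sum is explicitly computable: it equals $N\frc{\cdot}$-type terms plus an error of size at most $\tfrac12|N-N'\,|$-ish. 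Summing over blocks, with the symmetry cancelling the first-order terms, should give $\sup|D|$ as an explicit function of $q$, $m$ and $\frc{\sqrt r}$.

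I expect the main obstacle to be proving that this explicit supremum is $<\sqrt r/2$ uniformly in~$r$, and also attained only away from the endpoints. The first thing to try is to show that the estimate closes for all $r$ beyond some explicit threshold $R$, by equidistribution and the block bound above. For the finitely many orders $r<R$, the supremum of $|D|$ over the phase is determined by the finitely many breakpoints of a piecewise linear function of the phase. These breakpoints are the values $-j\alpha+\sqrt r/2$ modulo~$1$, so the sup can be computed exactly in $\mathbb{Z}[\sqrt r]$, and comparing the resulting numbers with $\sqrt r/2$ is a finite exact check. This mirrors the strategy of combining equidistribution with exact arithmetic over a finite range of orders, as announced in the abstract.
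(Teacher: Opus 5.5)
Your reduction to $-\sqrt r/2<D(n)\le\sqrt r/2$ and your perfect-square computation are correct and coincide with the paper's (Proposition~\ref{prop:reduction} and the Hermite-identity argument of Theorem~\ref{thm:square}). Your remark that the endpoints are never hit is also correct, and more direct than the paper's jump-avoidance argument. Since $E(n)=(n+\tfrac12)\sqrt r-S(n)$, an equality $E(n)=(r\pm\sqrt r)/2$ would force $(n+\tfrac12\mp\tfrac12)\sqrt r\in\mathbb{Q}$. This means the bound you actually need is the \emph{weak} one, $\sup_\theta|D|\le\sqrt r/2$. The strict uniform bound you set as your goal is false at $r=2$: there $\frc{\theta}+\frc{\theta-1/\!\sqrt2}$ has supremum exactly $1+1/\!\sqrt2=(r+\sqrt r)/2$.

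The genuine gap is the non-square case for large $r$, which is the real content of the theorem. You say the estimate \emph{should} close beyond some $R$, but your block sketch does not yield it. Write $s=\fl{\sqrt r}$ (your $q$), $t=r-s^{2}$ (your $m$) and $\beta=\sqrt r-s$. A full block of $s$ or $s+1$ consecutive points, one turn of the $1/\!\sqrt r$-rotation, does not have a small centred sum: depending on its starting phase it can reach about $\tfrac12(1-\beta)$ or $\tfrac12\beta$ in absolute value. There are about $\sqrt r$ blocks, so the triangle inequality gives at best about $\tfrac12\sqrt r\max(\beta,1-\beta)$. The two partial blocks at the ends can each add up to about $\sqrt r/8$. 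Together this does not beat the threshold $\sqrt r/2$, and is far above it when $\beta$ is near $0$ or $1$. The reflection symmetry only gives $\sup D=-\inf D$; it cancels nothing inside the sum.

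The missing idea is that the block phases are themselves driven by a second rotation, by $\beta$. The accumulated block deviations therefore form a sawtooth window sum of about $s$ consecutive terms of the $\beta$-rotation; this is the walk $H$ and window sum $E$ of Lemmas~\ref{lem:secondscale} and~\ref{lem:window}. This second-level sum is the main term, not an error, and equidistribution cannot make it small. When $t$ is near $0$ or $2s$, $\beta$ or $1-\beta$ is below $1/s$, so the $s$ second-level phases do not complete even one turn and the sum is a linear drift. For $t=1$, for instance, $\operatorname{osc}E=s(1-s\beta)\approx s/2$.

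The paper evaluates the four boundary families $t\in\{1,2,2s-1,2s\}$ exactly. For $3\le t\le 2s-2$ it proves a second-scale block-deviation estimate with an explicit constant ($\Theta(\gamma)<8/9$ in Appendix~\ref{app:analytic}). That estimate is what yields the explicit threshold $s\ge 74$ and makes the exact check over $3\le r\le 5473$ sufficient. Without an effective second-scale bound of this kind your $R$ is never determined. Nor is there a closed form for $\sup|D|$ in general; the paper obtains one only for the boundary families.
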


Equivalently, the equation $a(S(n))=n$ holds for all $n\ge r$
exactly when $r$ is not an even perfect square. Odd squares
behave as non-squares. The perfect-square cases are proved in
Section~\ref{ssec:squares} (Theorem~\ref{thm:square}).

Theorem~\ref{thm:universal} concerns the range $n\ge r$.
For $1\le n<r$ the window in~\eqref{eq:AG} contains the
padding zeros, and the equation can fail. A direct computation
gives the complete classification of the full-range solutions:
$a(S(n))=n$ holds for all $n\ge 1$ exactly when
$r\in\{2,3,5,6\}$. For $7\le r\le 11$ it fails at $n=4$, where
$S(4)=7$ but $a(7)=3\ne 4$. For $r\ge 12$ it already fails at
$n=1$, since $a(1)=\fl{1/\!\sqrt{r}+\sqrt{r}/2}\ge 2$.
Consequently, for every $r\ge 7$ that is not an even perfect
square, the Beatty sequence~\eqref{eq:general_beatty} is an
eventual solution of the equation but not a full-range one,
while for $r\in\{2,3,5,6\}$ it is a solution in the full range.

The proof has three ingredients.
The continuous functional equation
identifies the unique nondecreasing affine candidate
(Section~\ref{sec:continuous}).
Floor discretization reduces the equation
to a bound on the oscillation of a walk driven by fractional
parts of multiples of $1/\!\sqrt{r}$
(Proposition~\ref{prop:oscillation}).
A block estimate for the associated window sums, refined by
reducing the walk to a shorter one of the same kind, controls
that oscillation
(Section~\ref{sec:universal}).

For every non-square order~$r$, the canonical shift
$d=\sqrt{r}/2$ is the center of the interval of admissible
shifts. Combined with the constraint on the slope
(Theorem~\ref{thm:slope}), this gives the complete
classification of the Beatty solutions of positive slope in
the full-window range $n\ge r$.

\begin{theorem}\label{thm:shift_interval}
Let $r\ge 2$ be non-square. A
Beatty sequence $a(n)=\fl{cn+d}$ with $c>0$ satisfies
\[
  a\Bigl(\textstyle\sum_{j=0}^{r-1}a(n{-}j)\Bigr)=n
  \qquad(n\ge r)
\]
if and only if
\[
  c=\frac{1}{\sqrt{r}}
  \qquad\text{and}\qquad
  d\in J_{r}
  =\Bigl[\tfrac{\sqrt{r}}{2}-\rho_{r},\,
         \tfrac{\sqrt{r}}{2}+\rho_{r}\Bigr],
  \quad
  \rho_{r}=\frac{\sqrt{r}-\Omega_{r}}{2\sqrt{r}\,(\sqrt{r}+1)},
\]
where $\Omega_{r}=\sup_{0\le\theta<1}\Phi_{r}(\theta)
-\inf_{0\le\theta<1}\Phi_{r}(\theta)$ and
$\Phi_{r}(\theta)=\sum_{j=0}^{r-1}\{\theta-j/\!\sqrt{r}\}$.
Moreover $\Omega_{r}\le\sqrt{r}$, with equality if and only if
$r=2$. Thus $J_{2}$ is a single point, while $J_{r}$ is a
nondegenerate interval for every non-square $r\ge 3$.
\end{theorem}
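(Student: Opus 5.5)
The plan is a direct reduction to the range of the sawtooth sum $\Phi_r$. The slope is handled by Theorem~\ref{thm:slope}. For $c\neq 1/\!\sqrt{r}$ that theorem already rules out a solution, so I fix $c=1/\!\sqrt{r}$ and study the shift $d$.

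\textbf{Computing $S(n)$.} Write $\theta_n=\{n/\!\sqrt{r}+d\}$. Expanding each floor as value minus fractional part gives
\[
S(n)=\sqrt{r}\,n+rd-\tfrac{(r-1)\sqrt{r}}{2}-\Phi_r(\theta_n),
\]
because $\{(n-j)/\!\sqrt{r}+d\}=\{\theta_n-j/\!\sqrt{r}\}$. Substituting,
\[
a(S(n))=\Bigl\lfloor n+\tfrac{1}{\sqrt{r}}\bigl(d(r+\sqrt{r})-\tfrac{(r-1)\sqrt{r}}{2}-\Phi_r(\theta_n)\bigr)\Bigr\rfloor .
\]
Hence $a(S(n))=n$ holds if and only if
\[
E-\sqrt{r}<\Phi_r(\theta_n)\le E,\qquad E=d\sqrt{r}(\sqrt{r}+1)-\tfrac{(r-1)\sqrt{r}}{2}.
\]
This is the content of Proposition~\ref{prop:oscillation}.

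\textbf{From the orbit to all $\theta$.} Since $1/\!\sqrt{r}$ is irrational, the orbit $(\theta_n)_{n\ge r}$ is dense in $[0,1)$; any tail suffices. The function $\Phi_r$ is piecewise linear with slope $r$ and has finitely many downward unit jumps at the points $j/\!\sqrt{r}$ mod $1$. Density therefore makes the infimum of $\Phi_r$ over the orbit equal to $\inf\Phi_r$, and likewise for the supremum. The two-sided bound thus holds for every $n$ if and only if $\sup\Phi_r\le E$ and $\inf\Phi_r>E-\sqrt{r}$.

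The main obstacle is deciding, at each endpoint, whether the extremum is attained on the orbit.
\begin{itemize}
\item The supremum is a left limit at a jump point. It is attained only if some $\theta_n$ lands on a jump from the left, which it cannot, since $\Phi_r$ is right-continuous there. So $\sup\Phi_r\le E$ is the right condition and gives a closed endpoint.
\item The infimum is attained exactly at a jump point, that is, when $\theta_n=j/\!\sqrt{r}$ mod $1$. That forces $d\in\mathbb{Z}+\mathbb{Z}/\!\sqrt{r}$. I would check that at the endpoint value of $d$ this coincidence does not occur for $n\ge r$: it would tie $d$ simultaneously to $\inf\Phi_r$ and to such a lattice point, and this should be impossible by an irrationality argument in $\mathbb{Z}[\sqrt{r}]$. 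That check is what makes $J_r$ closed on the left.
\end{itemize}
Solving the two inequalities for $d$ gives an interval of length $(\sqrt{r}-\Omega_r)/(\sqrt{r}(\sqrt{r}+1))=2\rho_r$.

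\textbf{Symmetry and centring.} The reflection $\theta\mapsto c_0-\theta$, with $c_0=(r-1)/\!\sqrt{r}$ mod $1$, permutes the terms $\{\theta-j/\!\sqrt{r}\}$ up to $x\mapsto 1-x$ away from the jumps. Hence $\Phi_r(c_0-\theta)=r-\Phi_r(\theta)$ generically, and so $\sup\Phi_r+\inf\Phi_r=r$. Substituting this shows the interval is centred at $\sqrt{r}/2$, which gives exactly $J_r$.

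\textbf{Bound on $\Omega_r$.} For $r=2$, with $\alpha=1/\!\sqrt{2}$, a direct computation gives
\[
\Phi_2=\begin{cases}2\theta+1-\alpha, & \theta\in[0,\alpha),\\ 2\theta-\alpha, & \theta\in[\alpha,1),\end{cases}
\]
so $\Omega_2=(1+\alpha)-(1-\alpha)=\sqrt{2}$ and $J_2$ is a single point. For non-square $r\ge 3$, the inequality $\Omega_r<\sqrt{r}$ is exactly the oscillation bound established in Section~\ref{sec:universal}. Theorem~\ref{thm:universal}, with $d=\sqrt{r}/2$, yields only the non-strict bound $\Omega_r\le\sqrt{r}$. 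I would therefore rerun the block estimate for the walk of fractional parts, together with its reduction to a shorter walk, keeping track of the strict margin. The difficulty is that the estimate is asymptotic in $r$, so a finite range of small orders may need a separate exact computation of the breakpoints in $\mathbb{Z}[\sqrt{r}]$, as the abstract indicates.
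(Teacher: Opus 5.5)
Your overall route matches the paper's proof. You reduce via the window identity to $E-\sqrt{r}<\Phi_r(\theta_n)\le E$ with $E=\sqrt{r}\,D$. (This is Proposition~\ref{prop:general_shift}; Proposition~\ref{prop:oscillation} is only the case $d=\sqrt{r}/2$.) You then use density for strict violations and $\sup\Phi_r+\inf\Phi_r=r$ to centre the interval.

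\textbf{The gap.} It lies in the attainment check for the infimum, which is what makes $J_r$ closed. This check concerns the \emph{right} endpoint, not the left. Since $E$ increases with $d$, the condition $\sup\Phi_r\le E$ bounds $d$ from below and $E-\sqrt{r}\le\inf\Phi_r$ bounds it from above.

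Your justification, that a coincidence $\theta_n=\frc{jc}$ at $d=d_+$ ``should be impossible by an irrationality argument,'' is false as stated. Take $j$ with $\Phi_r(\frc{jc})=\inf\Phi_r$ and set $K_j=\sum_{i=j-r+1}^{j}\fl{ic}$. Then $d_+=(j+1-K_jc)/(\sqrt{r}+1)$. Multiply $cn+d_+-jc\in\mathbb{Z}$ by $\sqrt{r}(\sqrt{r}+1)$ and separate rational and $\sqrt{r}$ parts. Exactly one candidate remains, $n=-(r+j+K_j)/(r-1)$, and it is often an integer:
\begin{itemize}
\item for $r=2$, $d_+=1/\!\sqrt{2}$ and $\theta_{-1}=0$;
\item for $r=3$, $d_+=5c-2$ and $\theta_{-3}=\frc{2c}$, which is the minimizing jump point.
\end{itemize}
So $d_+$ does lie in $\mathbb{Z}+\mathbb{Z}/\!\sqrt{r}$, and the bi-infinite orbit does hit the infimum, just at a negative index. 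Irrationality only gives uniqueness of the candidate.

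\textbf{How to close it.} You need a size bound. For $|i|\le r-1$ one has $|ic|\le(r-1)/\sqrt{r}<s+1$, so $\fl{ic}\ge-(s+1)$. Hence $K_j\ge-r(s+1)$ and $n\le rs/(r-1)<r$ for $r\ge 3$. No $n\ge r$ reaches the infimum, so $d_+\in J_r$.

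For $r=2$ this bound gives only $n\le 2$. There you need the direct value $n=-1$, or Theorem~\ref{thm:canonical}. Your $r=2$ paragraph computes $\Omega_2=\sqrt{2}$ but never shows that the single point $\sqrt{2}/2$ is admissible. Without that, $J_2$ could a priori be empty.

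\textbf{Two smaller points.} First, your reflection $\theta\mapsto c_0-\theta$ is a valid and more direct proof of Lemma~\ref{lem:palindrome} than the paper's palindromic walk. Passing from the generic identity $\Phi_r(c_0-\theta)=r-\Phi_r(\theta)$ to the extrema is harmless, since the supremum is a left limit and the infimum a right limit of values at non-jump points.

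Second, nothing needs to be rerun for the strict bound. You are right that the statement of Theorem~\ref{thm:universal} only yields $\Omega_r\le\sqrt{r}$. However, Lemmas~\ref{lem:boundary} and~\ref{lem:middle} and Proposition~\ref{prop:finite} are already stated as $\max P-\min P<\sqrt{r}-1$. Via $\Omega_r=1+\operatorname{osc}P$ this is exactly $\Omega_r<\sqrt{r}$ for non-square $r\ge 3$, and the paper simply cites these results.
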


For odd perfect squares the slope is likewise forced, but the
rational value $c=1/\!\sqrt{r}$ replaces the dense orbit by a
periodic one and we do not classify the shifts here.

\subsection{The triple-nested family}

Applying~$a$ to both sides of~\eqref{eq:strong}
gives $a(a(S(n)))=a(n)$, the triple-nested equation
\begin{equation}\label{eq:main}
  a\bigl(a(a(n)+a(n{-}1))\bigr)=a(n).
\end{equation}
More generally, for $k\ge 1$ consider the equations
\begin{equation}\label{eq:Ek}
  (E_{k})\qquad a^{\circ k}\bigl(S(n)\bigr)=a^{\circ(k-1)}(n)
  \ \ (n\ge r),\qquad a^{\circ 0}=\mathrm{id},
\end{equation}
the first two of which, $(E_{1})$ and $(E_{2})$, are the almost
Golomb equation and the triple-nested equation~\eqref{eq:main}.
Applying~$a$ to $(E_{k})$ yields $(E_{k+1})$, so the set of
solutions grows with~$k$. We classify the Beatty solutions
of $(E_{1})$ with $c>0$ for every non-square~$r$
(Theorem~\ref{thm:shift_interval}), and those of $(E_{2})$
for $r=2$ and $r=3$ (Theorems~\ref{thm:interval}
and~\ref{thm:interval_r3}).

Write the inhomogeneous Beatty sequence of slope
$c=1/\!\sqrt{2}$ and shift~$d$ as
\[
  a(n)=\fl{cn+d}.
\]
The canonical
solution~\eqref{eq:canonical}
corresponds to $d=\sqrt{2}/2$, the unique shift for which
the strong equation~\eqref{eq:strong} holds. This case is
special, since among non-square orders $r=2$ is the only one
for which the shift is unique
(Theorem~\ref{thm:shift_interval}).
The weaker triple-nested
equation~\eqref{eq:main}, by contrast, admits an interval of
shifts. The paper determines this interval for $r=2$, and the
analogous interval for $r=3$.

\begin{theorem}\label{thm:interval}
A Beatty sequence $a(n)=\fl{cn+d}$ with $c>0$ satisfies
the triple-nested equation~\eqref{eq:main} for all $n\ge 2$
if and only if
\[
  c=\tfrac{1}{\sqrt{2}}\qquad\text{and}\qquad
  d\in I=\bigl[\sqrt{2}/2,\;2(\sqrt{2}-1)\bigr].
\]
\end{theorem}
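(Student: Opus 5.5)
By Theorem~\ref{thm:slope}, a Beatty solution of positive slope of the triple-nested equation~\eqref{eq:main}, i.e.\ of $(E_2)$, must have $c=1/\!\sqrt2$. So I take $c=1/\!\sqrt2$ throughout and reduce~\eqref{eq:main} to a condition on the phase $\theta_n=\{cn+d\}$.

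\emph{Step 1: an explicit defect.} Write $x_n=cn+d$. Then $a(n)=x_n-\{x_n\}$ and $a(n-1)=x_n-c-\{x_n-c\}$, so
\[
S(n)=2x_n-c-F(\theta_n),\qquad F(\theta)=\{\theta\}+\{\theta-c\}.
\]
Using $2c\cdot c=1$, the identity $cS(n)+d=n+(1+\sqrt2)d-\tfrac12-cF(\theta_n)$ gives
\[
a(S(n))=n+e(\theta_n),\qquad e(\theta)=\Bigl\lfloor(1+\sqrt2)d-\tfrac12-\tfrac{F(\theta)}{\sqrt2}\Bigr\rfloor .
\]
Explicitly, $F(\theta)=2\theta-c$ on $[c,1)$ and $F(\theta)=2\theta-c+1$ on $[0,c)$. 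Hence $F$ takes values in $[1-c,\,1+c)$ on $[0,c)$ and in $[c,\,2-c)$ on $[c,1)$.

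\emph{Step 2: translate $(E_2)$.} Equation~\eqref{eq:main} reads $a(n+e(\theta_n))=a(n)$. This holds exactly in the following cases:
\begin{itemize}
\item $e(\theta_n)=0$;
\item $e(\theta_n)=1$ and $n,n+1$ lie in the same run of $a$, i.e.\ $\theta_n<1-c$;
\item $e(\theta_n)=-1$ and $\theta_n\ge c$.
\end{itemize}
Since $c<1$, $a$ is strictly increasing over any two steps, so $|e|\ge2$ always fails. Each condition $e(\theta)\in\{-1,0,1\}$ together with the side condition is a finite system of linear inequalities in $(\theta,d)$ on the two linear pieces of $F$. I will solve this system to find the set $D$ of $d$ for which the condition holds for every $\theta\in[0,1)$. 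I expect $D$ to come out as $[\sqrt2/2,\,2(\sqrt2-1)]$, possibly up to endpoint conventions.
\begin{itemize}
\item The left endpoint should come from $e\ge-1$ (and from $e=-1$ being forced when $\theta<c$).
\item The right endpoint should come from forbidding $e=1$ when $\theta\ge1-c$, that is, from the value of $F$ near $\theta=1-c$.
\end{itemize}
I will also check that $D$ contains the point $J_2=\{\sqrt2/2\}$ of Theorem~\ref{thm:shift_interval}, as it must.

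\emph{Step 3: from all $\theta$ to the orbit.} Since $c$ is irrational, $(\theta_n)_{n\ge2}$ is dense in $[0,1)$.
\begin{itemize}
\item \emph{Sufficiency.} If $d\in I$, the condition holds for all $\theta$, hence for every $\theta_n$.
\item \emph{Necessity.} If $d\notin I$, the bad set of $\theta$ is a finite union of intervals. I must check that it has nonempty interior, so that density produces some $n\ge2$ with $\theta_n$ in it.
\end{itemize}

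\emph{Main obstacle: the endpoints.} The delicate part is where the inequalities are only half-open. Floors and fractional parts make the bad set for some $d$ just outside $I$ a single boundary point or a half-open interval of zero length. There are two risks:
\begin{itemize}
\item For $d$ slightly beyond an endpoint, the bad set could shrink to a point the orbit never visits, and then $d$ would wrongly be admissible.
\item At an endpoint itself, a boundary value might be attained exactly.
\end{itemize}
I would handle this by tracking strict versus nonstrict inequalities on each linear piece. The decisive fact is that $\theta_n=\theta^*$ for a specific $\theta^*$ can happen only when $d\in\mathbb{Z}+\mathbb{Z}c$, i.e.\ $d\in\mathbb{Q}(\sqrt2)$. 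The right endpoint $2(\sqrt2-1)$ is such a value. There I expect $e=1$ to occur on the thin set mentioned in the abstract, yet always with $\theta_n<1-c$, so~\eqref{eq:main} still holds there while $(E_1)$ fails. I would finally verify the small indices ($n=2$ and the padding convention $a(k)=0$ for $k<1$) by direct computation at both endpoints.
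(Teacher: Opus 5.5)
Your reduction is the paper's own. Your $e(\theta_n)$ is its $m(n)$, since $e=\fl{D-\tfrac12-cF(\theta)}$ with $D=(1+\sqrt2)d$. Your three admissible cases are (N$_{+}$) and (N$_{-}$) together with $|m|\le1$.

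The gap is that the decisive step, solving the piecewise-linear system, is only announced, and both hints you give for it point to the wrong place.

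\emph{Left endpoint.} It does not come from $e\ge-1$. It comes from the range $[1-c,c)$, where $n$ and $n-1$ are both non-repeats, so $e=0$ is forced. There $e=\fl{D-c-2c\theta}$, and the argument tends to $D-1-c$ as $\theta\to c^{-}$. Hence $e\ge0$ throughout iff $D\ge1+c$, i.e.\ $d\ge c$. For $d<c$ the failure set contains an open interval ending at $c$.

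\emph{Right endpoint.} It does not come from $F$ near $\theta=1-c$. $F$ is continuous there, and $e(1-c)\le0$ only gives $D<3c$, i.e.\ $d<3-3c\approx0.879$. Following your hint would therefore produce the wrong endpoint. The right endpoint comes from the jump of $F$ at $\theta=c$, where $F$ reaches its minimum $c$ over the non-repeat range $[1-c,1)$ and $e(c)=\fl{D-1}$. Forbidding $e=1$ there gives $D\le2$, i.e.\ $d\le4c-2=2(\sqrt2-1)$. For $D>2$ the failure set contains an interval $[c,c+\epsilon)$ of positive length.

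The remaining constraints are slack on $I$: $0\le e\le1$ on $[0,1-c)$, $e\le0$ on $[1-c,c)$, and $e\ge-1$ on $[c,1)$.

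Consequently your sufficiency step, ``if $d\in I$ the condition holds for all $\theta$'', is false at $d=2(\sqrt2-1)$. There $D=2$, and $e=1$ exactly on $[0,c-\tfrac12]\cup\{c\}$. Since $\theta=c$ is a non-repeat phase, the pointwise condition fails at it. Admissibility of the right endpoint rests on the orbit never reaching this phase. Indeed $\theta_n=c$ means $c(n+3)-2\in\mathbb{Z}$, which forces $n=-3$.

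Your remark that hitting a prescribed phase requires $d\in\mathbb{Z}+\mathbb{Z}c$ does not settle this, because $2(\sqrt2-1)$ lies in that set. You must identify the critical phase $\theta=c$ and compute the index. Your expectation that $e=1$ occurs only with $\theta_n<1-c$ at this endpoint holds along the orbit precisely because of this avoidance.

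A minor slip: excluding $|e|\ge2$ uses $2c\ge1$ (no two consecutive repeats), not $c<1$.

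Once these computations are supplied, your argument coincides with the paper's. There both endpoints are governed by the single critical phase $\theta=c$: approached from the left at $d=c$, and attained but missed by the orbit at $d=2(\sqrt2-1)$.
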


The slope condition is Theorem~\ref{thm:slope} (with $k=2$ and
$r=2$). We determine the shift below.

The analogous theorem for $r=3$, stated and
proved in Section~\ref{sec:interval_r3}, gives, for a
Beatty sequence with $c>0$,
\[
  \begin{aligned}
    &a(a(a(n)+a(n{-}1)+a(n{-}2)))=a(n)
    \ \text{for all } n\ge 3\\
    &\qquad\iff
    c=\tfrac{1}{\sqrt{3}},\ \
    d\in I_{3}=\Bigl[\tfrac{\sqrt{3}+3}{6},\;
    \tfrac{5\sqrt{3}-3}{6}\Bigr].
  \end{aligned}
\]
Both intervals are determined exactly by the same method, a
case analysis over the finitely many configurations of the
Sturmian rotation combined with equidistribution of the
irrational orbit. Convergents of $c$ play no role in the
proof.

The right endpoint $d=2(\sqrt{2}{-}1)$ for $r=2$
admits a simple algebraic form. At this boundary
the strong equation fails on a sparse defect
set. Theorem~\ref{thm:absorption} identifies
this defect set as the return-time set of the
Sturmian rotation $\theta\mapsto\frc{\theta+c}$
to the interval $[1{-}c,1/2]$, which explains
every failure by absorption into a repeated value
of the endpoint Beatty sequence. A direct first-return
analysis, together with the return-time form of the
three-distance theorem, yields the three gap values
$\{3,4,7\}$, and the
gap sequence generates the minimal subshift of an
explicit primitive substitution with Perron
eigenvalue $1+\sqrt{2}$
(Theorem~\ref{thm:defect_subst}). The proof is an
exact self-induction of the first-return map, with
all computations in $\mathbb{Q}(\sqrt{2})$.

The equation, the interval $I$, and the defect
structure are naturally expressed in the
Pell--Ostrowski setting of
Fokkink~\cite{Fokkink}, which generalizes the
Wythoff array of Conway and Ryba to the recurrence
$X_{n+1}=2X_{n}+X_{n-1}$. Section~\ref{sec:pell_ostrowski}
makes this connection explicit. For the family $r=s^{2}+1$
with $s\ge 1$, the convergent denominators of $1/\!\sqrt{r}$
obey the recurrence of Fokkink's tower of parameter~$2s$.

\subsection{Outline}

Sections~\ref{sec:continuous}--\ref{sec:universal}
prove the existence and classification results.
Section~\ref{sec:continuous} establishes the exact
continuous solution and shows that the slope of a Beatty
solution is forced (Theorem~\ref{thm:slope}).
Section~\ref{sec:universal} reduces the discrete
equation to an oscillation bound for a sawtooth
walk, controls the walk by block estimates at a
second scale together with a finite exact computation,
and classifies the admissible shifts
(Theorem~\ref{thm:shift_interval}).
Section~\ref{sec:interval} classifies the Beatty solutions of
the triple-nested equation~\eqref{eq:main} at $r=2$
(Theorem~\ref{thm:interval}) and at $r=3$
(Theorem~\ref{thm:interval_r3}), and describes their
Pell--Ostrowski interpretation. It closes with a defect
criterion for all the iterated equations~$(E_{k})$ and shows
that their admissible shift sets form an increasing chain
(Proposition~\ref{prop:composed}, Corollary~\ref{cor:chain}).
The remaining sections treat structural
features specific to $r=2$.
Section~\ref{sec:structure} describes the Sturmian
structure and the bifurcation behavior.
Section~\ref{sec:defect} proves that the defect
set at the upper endpoint of the Beatty interval
is the return-time set of a Sturmian rotation to
an explicit sub-interval (Theorem~\ref{thm:absorption}),
computes its density and the three gap values, and
proves that the gap sequence generates the minimal
subshift of a primitive substitution on $\{3,4,7\}$
whose Perron eigenvalue is the silver ratio
$1+\sqrt{2}$ (Theorem~\ref{thm:defect_subst}).

\section{The continuous model and the slope}\label{sec:continuous}

In the discrete equation~\eqref{eq:AG}, the
sliding-window sum $S(n)$ makes the equation read
$a\circ S=\mathrm{id}$.
The continuous analogue replaces~$a$ by a continuous
nondecreasing function~$\varphi$:
\begin{equation}\label{eq:continuous}
  \varphi\Bigl(\sum_{j=0}^{r-1}\varphi(x{-}j)\Bigr)=x.
\end{equation}
Equation~\eqref{eq:continuous} is an iterative functional
equation in a single variable, in the sense of Kuczma,
Choczewski, and Ger~\cite{KCG}.
A natural candidate is the affine function
\begin{equation}\label{eq:fsol}
  \varphi(x)=\frac{x}{\sqrt{r}}+\frac{\sqrt{r}-1}{2}.
\end{equation}

\begin{theorem}\label{thm:continuous}
The function~\eqref{eq:fsol} is the unique nondecreasing
affine solution of~\eqref{eq:continuous}. The only other
affine solution is the decreasing function
$\varphi_{-}(x)=-x/\!\sqrt{r}-(\sqrt{r}+1)/2$.
Moreover, every continuous nondecreasing solution
is a homeomorphism of~$\mathbb{R}$ satisfying
the inverse equation
\begin{equation}\label{eq:inverse}
  \varphi^{-1}(x)=\sum_{j=0}^{r-1}\varphi(x{-}j).
\end{equation}
\end{theorem}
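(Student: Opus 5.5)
First I handle the affine part by direct substitution. Write $\varphi(x)=\alpha x+\beta$. Then
\[
  \sum_{j=0}^{r-1}\varphi(x-j)=r\alpha x-\alpha\tfrac{r(r-1)}{2}+r\beta,
\]
and applying $\varphi$ gives
\[
  r\alpha^{2}x+\alpha\Bigl(r\beta-\alpha\tfrac{r(r-1)}{2}\Bigr)+\beta .
\]
Comparing coefficients with $x$ yields two conditions. The linear coefficient gives $r\alpha^{2}=1$, so $\alpha=\pm1/\sqrt{r}$. The constant term gives $\beta(1+r\alpha)=\alpha^{2}r(r-1)/2=(r-1)/2$.

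For $\alpha=1/\sqrt{r}$ this reads $\beta(1+\sqrt{r})=(r-1)/2$. Hence $\beta=(\sqrt{r}-1)/2$, which is \eqref{eq:fsol}. For $\alpha=-1/\sqrt{r}$ we get $\beta(1-\sqrt{r})=(r-1)/2$. Since $r\ge2$ gives $\sqrt r\ne1$, this yields $\beta=-(\sqrt{r}+1)/2$, which is $\varphi_{-}$. A constant $\varphi$ ($\alpha=0$) is excluded because the left side of \eqref{eq:continuous} would then be constant. So these are the only affine solutions, and exactly one of them is nondecreasing.

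Next, let $\varphi$ be continuous, nondecreasing, and a solution. Set $T(x)=\sum_{j}\varphi(x-j)$, so that \eqref{eq:continuous} reads $\varphi\circ T=\mathrm{id}$.
\begin{itemize}
\item \emph{$\varphi$ is surjective.} The identity $\varphi(T(x))=x$ shows that every real $x$ is attained.
\item \emph{$\varphi$ is strictly increasing.} Since $\varphi$ is nondecreasing and surjective, it is unbounded above and below. Each $\varphi(x-j)$ tends to $\pm\infty$ as $x\to\pm\infty$, so $T$ is continuous, nondecreasing and surjective onto $\mathbb{R}$. Suppose $\varphi$ were constant equal to $v$ on some interval $[p,q]$ with $p<q$. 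Pick $x$ with $T(x)\in(p,q)$, which is possible because $T$ is onto. Then $x=\varphi(T(x))=v$, so $T^{-1}((p,q))\subseteq\{v\}$. But $T^{-1}((p,q))$ is nonempty, and it is open in the connected set $\mathbb{R}$ because $T$ is continuous, so it contains an interval. This is a contradiction, so $\varphi$ is strictly increasing.
\end{itemize}
A continuous, strictly increasing surjection of $\mathbb{R}$ is a homeomorphism. Then $\varphi\circ T=\mathrm{id}$ gives $T=\varphi^{-1}$, which is \eqref{eq:inverse}.

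The main point requiring care is the strict monotonicity. Surjectivity of $\varphi$ is immediate, but ruling out flat pieces needs the surjectivity of $T$, as in the argument above.
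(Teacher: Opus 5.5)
Your proof is correct and follows the paper's route: the same coefficient comparison for the affine solutions, then surjectivity of $\varphi$ from $\varphi\circ T=\mathrm{id}$, then exclusion of flat pieces of $\varphi$ using continuity of the window sum. The only difference is in that last step: you first show $T$ is onto and use that $T^{-1}((p,q))$ is open, whereas the paper does not need $T$ to be onto and instead uses the monotonicity of $\varphi$ to show that $g$ would have to jump from below $u$ to above $v$ at $x=\lambda$, which contradicts continuity.
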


\begin{proof}
Suppose first that $\varphi(x)=cx+d$ is affine. Then
\[
  \sum_{j=0}^{r-1}\varphi(x{-}j)
  =r(cx+d)-\frac{cr(r{-}1)}{2}
\]
and then
\[
  \varphi\Bigl(\sum\varphi(x{-}j)\Bigr)
  =rc^{2}x+\Bigl(rcd-\frac{rc^{2}(r{-}1)}{2}+d\Bigr).
\]
This equals~$x$ if and only if $rc^{2}=1$
and $d(rc+1)=(r{-}1)/2$. The first equation gives
$c=\pm 1/\!\sqrt{r}$. For $c=1/\!\sqrt{r}$ one obtains
$d=(\sqrt{r}-1)/2$, the nondecreasing
solution~\eqref{eq:fsol}. For $c=-1/\!\sqrt{r}$ one obtains
$d=-(\sqrt{r}+1)/2$, the decreasing solution~$\varphi_{-}$.

Next, every continuous nondecreasing solution is a
homeomorphism satisfying the inverse equation.
Let $\varphi$ be any such solution and set
$g(x)=\sum_{j=0}^{r-1}\varphi(x{-}j)$.
Then $g$ is continuous and nondecreasing, and
$\varphi(g(x))=x$ for all~$x$,
so $\varphi$ is surjective and $g$ is injective.
Since $g$ is continuous, nondecreasing, and injective,
it is strictly increasing.
If some level set $\varphi^{-1}(\{\lambda\})=[u,v]$ were
nondegenerate ($u<v$), then $\varphi(g(x))=x$ forces
$g(x)\notin[u,v]$ for $x\ne\lambda$, so $g(x)<u$ for $x<\lambda$
and $g(x)>v$ for $x>\lambda$. Continuity of~$g$ at~$\lambda$
would then give $u\ge g(\lambda)\ge v$, contradicting $u<v$.
So $\varphi$ is strictly increasing, hence a
homeomorphism, and $g=\varphi^{-1}$.
\end{proof}

\begin{remark}
The discrete Beatty sequence
$a(n)=\fl{n/\!\sqrt{r}+\sqrt{r}/2}$ is the floor
discretization of~\eqref{eq:fsol}. Its shift $\sqrt{r}/2$
equals $(\sqrt{r}-1)/2+1/2$, and the added $1/2$ compensates
the mean of the fractional part.
\end{remark}

Whether the inverse equation~\eqref{eq:inverse}
already forces $\varphi$ to be affine remains open
(see Section~\ref{sec:open}).

\subsection{The slope of a Beatty solution}

The continuous model forces the slope $1/\!\sqrt{r}$ among
nondecreasing affine solutions (Theorem~\ref{thm:continuous}).
The following growth argument shows that the same slope is
forced for every Beatty solution of positive slope, for every
$k\ge 1$ in~\eqref{eq:Ek}.

\begin{theorem}\label{thm:slope}
Let $r\ge 2$, $k\ge 1$, $c>0$, $d\in\mathbb{R}$, and
\[
  a(n)=\fl{cn+d},\qquad
  S_{a}(n)=\sum_{j=0}^{r-1}a(n{-}j),
\]
and set $a^{\circ 0}=\mathrm{id}$. If
\[
  a^{\circ k}\bigl(S_{a}(n)\bigr)=a^{\circ(k-1)}(n)
\]
for infinitely many positive integers~$n$, then
$c=1/\!\sqrt{r}$.
\end{theorem}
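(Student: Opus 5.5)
The plan is a growth comparison. Since $a(n)=cn+d+O(1)$ with a uniform error $|a(n)-cn-d|<1$, every composition of $a$ with itself is affine up to a bounded error. Comparing leading coefficients in $n$ on both sides then forces $rc^{2}=1$.

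First, record the approximations. For any real $x$ we have $a(x)=cx+O(1)$ (here $a$ is only evaluated at integers, but the bound is uniform). By induction on $m$,
\[
a^{\circ m}(x)=c^{m}x+O_{m}(1),
\]
where the implied constant depends only on $m,c,d$. The induction step is
\[
a(a^{\circ m}(x))=c\,a^{\circ m}(x)+d+\theta,\qquad |\theta|<1,
\]
so the error multiplies by $c$ and grows by at most $|d|+1$ at each step. Next,
\[
S_a(n)=\sum_{j=0}^{r-1}\bigl(c(n-j)+d+O(1)\bigr)=rcn+O(1),
\]
uniformly in $n\ge r$. Combining the two estimates gives
\[
a^{\circ k}(S_a(n))=c^{k}\,rc\,n+O_{k}(1)=rc^{k+1}n+O(1),
\qquad
a^{\circ(k-1)}(n)=c^{k-1}n+O(1).
\]

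There is one subtlety: the iterates may be evaluated at nonpositive arguments if the convention $a(m)=0$ for $m<1$ is in force. Since $c>0$, we have $a(m)\to\infty$ as $m\to\infty$, so for all sufficiently large $n$ every argument of every iterate is large and positive. The formula $\lfloor cm+d\rfloor$ therefore applies throughout, and the padding convention never intervenes.

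Finally, take the infinitely many $n$ for which the equation holds and let $n\to\infty$ along them. Subtracting the two expansions gives
\[
(rc^{k+1}-c^{k-1})\,n=O(1)
\]
along an unbounded sequence, so $rc^{k+1}=c^{k-1}$. Dividing by $c^{k-1}>0$ gives $rc^{2}=1$, and since $c>0$ we conclude $c=1/\sqrt{r}$.

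The only step needing care is the uniformity of the error bound in the iterated composition, which is routine. I expect no genuine obstacle.
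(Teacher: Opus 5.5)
Your proposal is correct and follows the paper's proof essentially step for step. Both use the same argument: the estimate $a(m)=cm+O(1)$ is applied to each iterate once $n$ is large enough that all arguments are large and positive. This gives $rc^{k+1}n+O(1)=c^{k-1}n+O(1)$ along an unbounded set of $n$, which forces $rc^{2}=1$.
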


\begin{proof}
From $a(m)=cm+d-\frc{cm+d}$ we have $a(m)=cm+O(1)$, with
implied constant at most $|d|+1$. Since $c>0$, every argument
appearing below tends to $+\infty$ with~$n$, so this estimate
applies to each of the finitely many iterates in turn. Summing
over the window, $S_{a}(n)=rcn+O(1)$, and applying~$a$
repeatedly,
\[
  a^{\circ k}\bigl(S_{a}(n)\bigr)=rc^{\,k+1}n+O(1),
  \qquad
  a^{\circ(k-1)}(n)=c^{\,k-1}n+O(1).
\]
If these agree for infinitely many~$n$, then
$c^{\,k-1}(rc^{2}-1)\,n=O(1)$ along an unbounded set of
integers, forcing $c^{\,k-1}(rc^{2}-1)=0$. Since $c>0$, this
gives $rc^{2}=1$, that is $c=1/\!\sqrt{r}$.
\end{proof}

The hypothesis $c>0$ cannot be dropped. For $k\ge 2$ every
positive constant sequence satisfies
$a^{\circ k}(S_{a}(n))=a^{\circ(k-1)}(n)$ trivially. Taking
$k=1$ shows that a Beatty solution of the almost Golomb
equation with $c>0$ has slope $1/\!\sqrt{r}$. The admissible
shifts are determined in Section~\ref{sec:universal}
(Theorem~\ref{thm:shift_interval}).

\section{The general equation}\label{sec:universal}

Throughout this section we fix $r\ge 2$ and write
$a$ for the order-$r$ Beatty sequence
\begin{equation*}  a(n)=\Bigl\lfloor
  \frac{n}{\alpha}+\frac{\alpha}{2}
  \Bigr\rfloor,
\end{equation*}
where $\alpha=\sqrt{r}$ and $c=1/\alpha$.
Set
\[
  S(n)=\sum_{j=0}^{r-1}a(n{-}j).
\]
We now prove Theorem~\ref{thm:universal}. The non-square case
is the equation $a(S(n))=n$ for all $n\ge r$. The
perfect-square cases are deferred to
Section~\ref{ssec:squares}.

The proof occupies Sections~\ref{ssec:discretization}
through~\ref{ssec:squares}.
Section~\ref{ssec:discretization} reduces the
equation $a(S(n))=n$ to a uniform bound on the
sawtooth function $\Phi$
(Proposition~\ref{prop:reduction}).
Section~\ref{ssec:walk} rewrites that bound as the
oscillation of a palindromic walk~$P$, recording the equation
$\Omega_{r}=1+\operatorname{osc}P$ for
$\Omega_{r}=\sup\Phi-\inf\Phi$.
Section~\ref{ssec:blocks} reduces the walk to a second,
shorter walk of the same type, and bounds the oscillation for
the boundary families $t\in\{1,2,2s{-}1,2s\}$ and, for
$s\ge 74$, for the middle range $3\le t\le 2s-2$. A finite
exact computation covers the remaining values of~$r$.
Section~\ref{ssec:completion} assembles these bounds into
$\Omega_{r}\le\sqrt{r}$.
Section~\ref{ssec:squares} treats the perfect-square
cases separately via Hermite's identity.

\subsection{Reduction to a bound on \texorpdfstring{$\Phi$}{Phi}}
\label{ssec:discretization}

By Theorem~\ref{thm:continuous}, the continuous
equation $\varphi(\sum\varphi(x{-}j))=x$ is exact.
The only obstruction to the discrete equation
$a(S(n))=n$ is the rounding error introduced
by the floor.
Define the sawtooth function
\[
  \Phi(n)
  :=\sum_{j=0}^{r-1}
  \Bigl\{\frac{n{-}j}{\alpha}+\frac{\alpha}{2}
  \Bigr\}.
\]

\begin{proposition}\label{prop:reduction}
For all $n\ge r$,
\[
  a(S(n))=n
  \quad\iff\quad
  \frac{r-\alpha}{2}
  <\Phi(n)
  \le\frac{r+\alpha}{2}.
\]
\end{proposition}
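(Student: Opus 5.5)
Since $a$ is a floor of an affine function with slope $c=1/\alpha>0$, it is characterized by an interval condition. For an integer $m$, $a(m)=n$ holds iff $n\le m/\alpha+\alpha/2<n+1$, that is,
\[
  \alpha n-\tfrac{r}{2}\le m<\alpha n+\alpha-\tfrac{r}{2},
\]
using $\alpha^2=r$. The plan is therefore to compute $S(n)$ exactly in terms of $\Phi(n)$ and substitute $m=S(n)$ into this two-sided inequality.

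First, write each summand as $a(n-j)=\frac{n-j}{\alpha}+\frac{\alpha}{2}-\bigl\{\frac{n-j}{\alpha}+\frac{\alpha}{2}\bigr\}$. This is valid because $n\ge r$ ensures all indices $n-j\ge 1$, so no padding zeros enter. Summing over $0\le j\le r-1$ gives
\[
  S(n)=\frac{rn}{\alpha}-\frac{r(r-1)}{2\alpha}+\frac{r\alpha}{2}-\Phi(n)
  =\alpha n-\frac{\alpha(r-1)}{2}+\frac{r\alpha}{2}-\Phi(n)
  =\alpha n+\frac{\alpha}{2}-\Phi(n),
\]
using $r/\alpha=\alpha$. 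This is the discrete shadow of the exact continuous identity in Theorem~\ref{thm:continuous}.

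Next, substitute into the characterization:
\[
  \alpha n-\tfrac r2\le \alpha n+\tfrac{\alpha}{2}-\Phi(n)<\alpha n+\alpha-\tfrac r2 .
\]
This simplifies to
\[
  \tfrac{r-\alpha}{2}<\Phi(n)\le \tfrac{r+\alpha}{2}.
\]
The left inequality of the display gives the right endpoint, which is non-strict. The strict right inequality gives the strict left endpoint. This matches the stated form exactly, and since every step is an equivalence, both directions of the proposition follow at once.

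There is no real obstacle here; the argument is bookkeeping. The only points needing care are the following.
\begin{itemize}
\item The identity $\alpha^2=r$ must be used consistently.
\item Strictness has to be tracked correctly through the floor characterization.
\item The hypothesis $n\ge r$ is needed so that the fractional-part decomposition applies to every window term.
\item $S(n)$ is automatically an integer, being a sum of integers, so the argument does not require $r$ to be a non-square; that hypothesis only becomes relevant later, when bounding $\Phi$.
\end{itemize}
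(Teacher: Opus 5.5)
Your proof is correct and takes essentially the same approach as the paper. Both compute $S(n)=\alpha n+\alpha/2-\Phi(n)$ exactly, using $\alpha^{2}=r$, and then read off the floor condition. You phrase the last step through the preimage interval $\{m: a(m)=n\}$, whereas the paper writes $a(S(n))=\lfloor n+(1+\alpha)/2-\Phi(n)/\alpha\rfloor$ and asks the offset to lie in $[0,1)$; the difference is only cosmetic.
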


\begin{proof}
Write $x_{j}=(n{-}j)/\alpha+\alpha/2$, so that
$a(n{-}j)=x_{j}-\{x_{j}\}$.
Summing over $j=0,\ldots,r{-}1$:
\[
  S(n)=\sum x_{j}-\Phi(n).
\]
Since $\alpha^{2}=r$ we obtain
$\sum x_{j}=(n+1/2)\alpha$, hence
$S(n)=(n+1/2)\alpha-\Phi(n)$.
Applying~$a$:
\[
  a(S(n))
  =\Bigl\lfloor
  n+\frac{1+\alpha}{2}
  -\frac{\Phi(n)}{\alpha}
  \Bigr\rfloor.
\]
This equals~$n$ iff
$0\le(1{+}\alpha)/2-\Phi(n)/\alpha<1$,
which rearranges to $(r{-}\alpha)/2<\Phi(n)
\le(r{+}\alpha)/2$.
\end{proof}

\subsection{The palindromic walk and the oscillation criterion}
\label{ssec:walk}

Write $s=\fl{\alpha}$ and $\beta=\alpha-s\in(0,1)$, and set
$\Sigma=\sum_{j=1}^{r-1}\frc{jc}$. The extrema of the sawtooth
are governed by a palindromic walk.

\begin{lemma}\label{lem:walk}
Let $\Delta_{k}=\frc{kc}+\frc{(r{-}k)c}-1$ for
$1\le k\le r-1$ and $P(j)=\sum_{k=1}^{j}\Delta_{k}$, with
$P(0)=0$. Then:
\begin{enumerate}
\item[\textup{(a)}] $\Delta_{k}=\beta-1$ if $\frc{kc}<\beta$,
  and $\Delta_{k}=\beta$ otherwise. Hence
  $P(j)=j\beta-C(j)$ with
  $C(j)=\#\{1\le k\le j:\frc{kc}<\beta\}$.
\item[\textup{(b)}] $\Delta_{k}=\Delta_{r-k}$, so
  $P(j)+P(r{-}1{-}j)=P(r{-}1)$ and
  $\max_{0\le j\le r-1}P+\min_{0\le j\le r-1}P=P(r{-}1)$.
\item[\textup{(c)}] $P(r{-}1)=2\Sigma-(r{-}1)$.
\end{enumerate}
\end{lemma}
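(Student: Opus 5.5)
Everything rests on the identity $rc=\alpha$, that is $(r{-}k)c=\alpha-kc=s+\beta-kc$. Part (a) will follow from it, (b) from the symmetry of the definition, and (c) from summing.

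For (a), write $(r{-}k)c=s+\beta-kc$. Since $s$ is an integer,
\[
  \frc{(r{-}k)c}=\frc{\beta-kc}=\frc{\beta-\frc{kc}}.
\]
Next note that $kc=k/\sqrt{r}$ is irrational for $1\le k\le r-1$ when $r$ is non-square. When $r$ is a square, we still have $0<kc<\alpha$, so the fractional part is defined, and I will allow $\frc{kc}=0$. In all cases $\frc{kc}\in[0,1)$. There are two cases.
\begin{itemize}
\item If $\frc{kc}<\beta$, then $\beta-\frc{kc}\in(0,1)$. Hence $\frc{(r{-}k)c}=\beta-\frc{kc}$, and $\Delta_k=\beta-1$.
\item If $\frc{kc}\ge\beta$, then $\beta-\frc{kc}\in(-1,0]$. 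Hence $\frc{(r{-}k)c}=1+\beta-\frc{kc}$, and $\Delta_k=\beta$.
\end{itemize}
The boundary case $\frc{kc}=\beta$ cannot occur when $r$ is non-square: it would give $kc\equiv\alpha \pmod 1$, so $(r{-}k)c$ would be an integer, which is impossible since $(r{-}k)c$ is irrational. When $\beta=0$ the case split degenerates, but the stated convention still gives the correct value. Summing $\Delta_k=\beta-\mathbf 1[\frc{kc}<\beta]$ over $k\le j$ gives $P(j)=j\beta-C(j)$.

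For (b), the definition of $\Delta_k$ is symmetric under $k\mapsto r-k$, so $\Delta_k=\Delta_{r-k}$. Then
\[
  P(r{-}1)-P(j)=\sum_{k=j+1}^{r-1}\Delta_k=\sum_{m=1}^{r-1-j}\Delta_{r-m}=P(r{-}1{-}j),
\]
using the substitution $m=r-k$. So the map $j\mapsto r-1-j$ is a bijection of $\{0,\dots,r-1\}$ that sends $P$ to $P(r{-}1)-P$. Therefore $\max P=P(r{-}1)-\min P$.

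For (c), sum the definition directly:
\[
  P(r{-}1)=\sum_{k=1}^{r-1}\frc{kc}+\sum_{k=1}^{r-1}\frc{(r{-}k)c}-(r{-}1)=2\Sigma-(r{-}1),
\]
since the second sum is $\Sigma$ reindexed.

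The only delicate step is the boundary case $\frc{kc}=\beta$ in (a), together with the perfect-square degeneracy $\beta=0$. I will handle both by the irrationality remark above, and by noting that in the square case $\beta=0$ the condition $\frc{kc}<0$ never holds, so the formula $\Delta_k=\beta$ stays consistent.
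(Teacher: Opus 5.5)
Your proof is correct in the lemma's actual setting, non-square $r$ (the paper fixes $\beta\in(0,1)$ just before the lemma), and it follows the paper's argument step for step; your irrationality argument ruling out $\frc{kc}=\beta$ makes explicit a point the paper's proof passes over. Drop the perfect-square remark, though, because it is false: for $r=s^{2}$ and $k=s$ one has $\frc{kc}=\frc{(r-k)c}=0=\beta$, so $\Delta_{s}=-1=\beta-1$ rather than $\beta$, which is exactly the point $\beta-\frc{kc}=0$ where your formula $\frc{(r-k)c}=1+\beta-\frc{kc}$ breaks down.
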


\begin{proof}
Since $kc+(r{-}k)c=\alpha=s+\beta$, the fractional part
$\frc{(r{-}k)c}$ equals $\beta-\frc{kc}$ when $\frc{kc}<\beta$
and $1+\beta-\frc{kc}$ when $\frc{kc}>\beta$. Hence
$\frc{kc}+\frc{(r{-}k)c}\in\{\beta,\,1+\beta\}$ and
$\Delta_{k}\in\{\beta-1,\,\beta\}$ accordingly, which gives
(a) on summing. The quantity $\frc{kc}+\frc{(r{-}k)c}$ is
symmetric under $k\mapsto r-k$, so $\Delta_{k}=\Delta_{r-k}$
and $P(j)+P(r{-}1{-}j)=P(r{-}1)$. Thus the multiset
$\{P(j):0\le j\le r-1\}$ is symmetric about $P(r{-}1)/2$ and
$\max P+\min P=P(r{-}1)$, which is (b). Finally, summing
$\Delta_{k}$ over $1\le k\le r-1$ and reindexing the second
term by $k\mapsto r-k$,
\[
  P(r{-}1)=\sum_{k=1}^{r-1}\frc{kc}
  +\sum_{k=1}^{r-1}\frc{(r{-}k)c}-(r{-}1)=2\Sigma-(r{-}1),
\]
which is (c).
\end{proof}

The local maximum of the sawtooth just before
the jump at $\theta=\frc{j_{0}c}$, where
$j_{0}\in\{0,1,\ldots,r{-}1\}$ indexes the jump points,
is
\begin{equation}\label{eq:local_max}
  M(j_{0})=r-j_{0}
  +\sum_{k=1}^{j_{0}}\frc{kc}
  -\sum_{m=1}^{r-1-j_{0}}\frc{mc},
\end{equation}
and the local minimum just after the same
jump is $M(j_{0})-1$
(using $\frc{-mc}=1-\frc{mc}$ for $m\ge 1$).
Figure~\ref{fig:sawtooth} shows the sawtooth over one period
for $r\in\{2,3,5\}$.

\begin{figure}[!htbp]
\centering
\includegraphics[width=\textwidth]{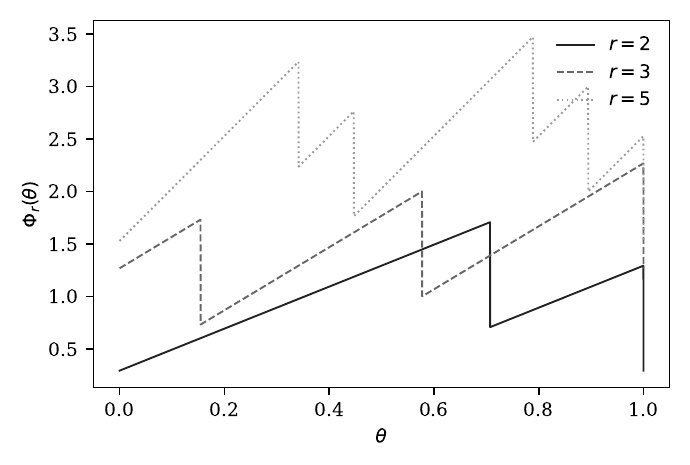}
\caption{The sawtooth $\Phi_{r}(\theta)
=\sum_{j=0}^{r-1}\{\theta-jc\}$ for $r\in\{2,3,5\}$ with
$c=1/\!\sqrt{r}$, over one period $\theta\in[0,1)$.}
\label{fig:sawtooth}
\end{figure}

The supremum and infimum of the sawtooth
$\Phi$ enjoy a palindromic symmetry.

\begin{lemma}\label{lem:palindrome}
$\sup\Phi+\inf\Phi=r$.
\end{lemma}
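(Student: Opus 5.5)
The plan is a reflection argument on the continuous sawtooth. Everything happens on the circle $\theta\in[0,1)$, with
\[
  \Phi_{r}(\theta)=\sum_{j=0}^{r-1}\frc{\theta-jc}
\]
as in Theorem~\ref{thm:shift_interval}. Here $\sup$ and $\inf$ are taken over $\theta$: by equidistribution of $nc$ these agree with the $\sup$ and $\inf$ of $\Phi(n)$ over integers, since the added shift $\alpha/2$ only translates $\theta$. The key identity is $\frc{-x}=1-\frc{x}$ for $x\notin\mathbb{Z}$. I will find a reflection $\theta\mapsto\tau-\theta$ that permutes the set of jump points $\{jc\}_{j=0}^{r-1}$ modulo~$1$.

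First I choose the reflection. Take $\tau=(r-1)c$. Then
\[
  \tau-\theta-jc=-(\theta-(r-1-j)c),
\]
so for $\theta$ off the jump set
\[
  \Phi_{r}(\tau-\theta)=\sum_{j}\frc{-(\theta-(r-1-j)c)}
  =\sum_{j'}\bigl(1-\frc{\theta-j'c}\bigr)=r-\Phi_{r}(\theta),
\]
after reindexing $j'=r-1-j$. The jump set is finite, and $\Phi_{r}$ is right-continuous and piecewise linear of slope~$r$. Taking $\sup$ over the complement of that finite set therefore changes nothing, since the values near a jump are approached from either side. Hence $\sup\Phi_{r}=r-\inf\Phi_{r}$, which is the claim.

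The only delicate point is this sup/inf bookkeeping. At a jump point, $\Phi_{r}$ attains its local minimum value $M(j_{0})-1$, while the local maximum $M(j_{0})$ is only a limit from the left. Under the reflection a left limit becomes a right limit, so the supremum (not attained) corresponds to the infimum (attained). I would make this precise by restricting to $\theta$ in the dense open complement of the jump set, where the identity holds pointwise.

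As a cross-check, I would also verify the identity with the explicit formula~\eqref{eq:local_max}. The claim becomes $\max_{j_{0}}M(j_{0})+\min_{j_{0}}M(j_{0})-1=r$. Writing $M(j_{0})=r-j_{0}+P$-type sums and using Lemma~\ref{lem:walk}(b),(c), the map $j_{0}\mapsto r-1-j_{0}$ turns $M$ into $r+1-M$. This is the same palindromic symmetry, so both routes agree.

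If the statement is meant over integer $n$, the final step is to pass from $\theta$ to $\Phi(n)$, using density of $\{nc+\alpha/2\}$ for irrational~$c$. For square $r$ the orbit is periodic, so one would instead interpret the statement in the continuous variable.
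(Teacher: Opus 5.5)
Your proof is correct, and it takes a different route from the paper's. The paper never reflects the continuous variable. It writes each local maximum as $M(j_{0})=r-\Sigma+P(j_{0})$ and reads off $\sup\Phi=r-\Sigma+\max P$ and $\inf\Phi=r-1-\Sigma+\min P$. It then concludes from the walk identity $\max P+\min P=P(r-1)=2\Sigma-(r-1)$ of Lemma~\ref{lem:walk}. That is essentially your cross-check, since $M(r-1-j_{0})=r+1-M(j_{0})$ is equivalent to $P(j)+P(r-1-j)=P(r-1)$.

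Your main argument instead proves the pointwise symmetry $\Phi_{r}((r-1)c-\theta)=r-\Phi_{r}(\theta)$ off the jump set, and then passes to $\sup$ and $\inf$. Your phrase ``approached from either side'' is loose; the step that actually works is right-continuity. Every value of $\Phi_{r}$ at a jump is a right limit of values off the jump set, so $\sup$ and $\inf$ over the dense complement equal those over the whole circle. The reflection also permutes the jump set, so it maps the complement bijectively onto itself.

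Your route is shorter and makes the palindromic structure conceptual: the walk symmetry $\Delta_{k}=\Delta_{r-k}$ is its discrete shadow. It also needs neither distinct jump points nor $\beta\in(0,1)$, so it covers perfect squares as well, where Lemma~\ref{lem:walk}(a) and the unit-drop formula for $M(j_{0})$ do not apply. The paper's route buys the individual formulas for $\sup\Phi$ and $\inf\Phi$ in terms of $\max P$ and $\min P$. These give $\Omega_{r}=1+\operatorname{osc}P$ and are reused in Proposition~\ref{prop:oscillation} and the block estimates. Your reflection yields only the sum $\sup\Phi+\inf\Phi$, which is all the lemma asks.
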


\begin{proof}
Expanding $P(j_{0})=\sum_{k=1}^{j_{0}}\Delta_{k}$ and
cancelling the common terms in~\eqref{eq:local_max} gives
$M(j_{0})=r-\Sigma+P(j_{0})$, so that
$\sup\Phi=r-\Sigma+\max P$ and
$\inf\Phi=r-1-\Sigma+\min P$ (the infimum being the post-jump
value $M(j_{0})-1$). Adding these and using
$\max P+\min P=P(r{-}1)=2\Sigma-(r{-}1)$
(Lemma~\ref{lem:walk}),
\[
  \sup\Phi+\inf\Phi=2r-1-2\Sigma+P(r{-}1)=r.\qedhere
\]
\end{proof}

\begin{remark}
The proof gives
$\sup\Phi=r-\Sigma+\max P$ and
$\inf\Phi=r-1-\Sigma+\min P$. Writing
$\operatorname{osc}P=\max P-\min P$, subtraction gives the
identity
\[
  \Omega_{r}:=\sup\Phi-\inf\Phi=1+\operatorname{osc}P.
\]
By Proposition~\ref{prop:reduction}, the universal equation
$a(S(n))=n$ is therefore equivalent to the oscillation bound
$\Omega_{r}\le\sqrt{r}$, which is strict for non-square
$r\ge 3$ and an equality for $r=2$, where the equation still
holds because the extrema of~$\Phi$ are not attained along
the orbit. The remainder of this section establishes this
bound.
\end{remark}

The exact criterion, in both directions, is the following.

\begin{proposition}\label{prop:oscillation}
Let $r\ge 2$ be a non-square, and let $\Sigma$, $\Delta_{k}$
and $P(j)$ be as in Lemma~\ref{lem:walk}. Then $a(S(n))=n$
for all $n\ge r$ if and only if
\begin{equation}\label{eq:osc_bound}
  \max_{0\le j\le r-1}P(j)-\min_{0\le j\le r-1}P(j)\le\alpha-1.
\end{equation}
If the left-hand side of~\eqref{eq:osc_bound} exceeds
$\alpha-1$, the equation fails at infinitely many~$n$.
\end{proposition}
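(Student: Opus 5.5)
By Proposition~\ref{prop:reduction}, the equation $a(S(n))=n$ holds for all $n\ge r$ if and only if the orbit values $\Phi(n)$, $n\ge r$, all lie in the half-open window $\bigl((r-\alpha)/2,(r+\alpha)/2\bigr]$. The plan is to compare this window with the range of the sawtooth, viewed as a function on the circle. Write
\[
  \Phi(n)=\Phi_{r}(\theta_{n}),\qquad
  \theta_{n}=\frc{nc+\alpha/2},
\]
where $\Phi_{r}(\theta)=\sum_{j=0}^{r-1}\frc{\theta-jc}$. Since $c=1/\alpha$ is irrational for non-square $r$, the sequence $(\theta_{n})_{n\ge r}$ is dense and equidistributed in $[0,1)$ by Weyl's theorem. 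By Lemma~\ref{lem:palindrome} and the subsequent remark, $\sup\Phi+\inf\Phi=r$ and $\sup\Phi-\inf\Phi=\Omega_{r}=1+\operatorname{osc}P$. Hence
\[
  \sup\Phi=\frac{r+\Omega_{r}}{2},\qquad
  \inf\Phi=\frac{r-\Omega_{r}}{2},
\]
so the range of $\Phi_{r}$ is centred at $r/2$, exactly as the window is. Condition~\eqref{eq:osc_bound} says precisely $\Omega_{r}\le\alpha$, i.e.\ that the closure of the range of $\Phi_{r}$ fits inside the closed window.

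\emph{Sufficiency.} Assume $\Omega_{r}\le\alpha$. Then every value $\Phi_{r}(\theta)$ satisfies
\[
  \frac{r-\alpha}{2}\le\frac{r-\Omega_{r}}{2}\le\Phi_{r}(\theta)\le\frac{r+\Omega_{r}}{2}\le\frac{r+\alpha}{2}.
\]
The upper endpoint is harmless because the window is closed on that side. The only issue is the lower endpoint, which is strict. It can be reached only if $\Omega_{r}=\alpha$ and $\theta_{n}$ attains the infimum. By the description of the extrema, the infimum is a post-jump value $M(j_{0})-1$, taken at $\theta=\frc{j_{0}c}$, one of the jump points. So I will show that $\theta_{n}=\frc{j_{0}c}$ is impossible: it would require $nc+\alpha/2-j_{0}c\in\mathbb{Z}$, i.e.\ $(n-j_{0})/\alpha+\alpha/2=m$ for some integer $m$. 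Multiplying by $2\alpha$ gives $2(n-j_{0})+r=2m\alpha$. The left side is a rational integer, so irrationality of $\alpha$ forces $m=0$ and $2(n-j_{0})=-r$. This is impossible for $n\ge r>j_{0}$. Equivalently, the floors in $\Phi(n)$ never sit exactly at a jump. Hence the strict inequality holds and the equation is valid for every $n\ge r$.

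\emph{Necessity and infinitely many failures.} Assume $\Omega_{r}>\alpha$. Then $\sup\Phi_{r}>(r+\alpha)/2$. The supremum is the left limit $M(j_{0})$ at some jump point. Because $\Phi_{r}$ is piecewise linear with slope $r$ and left-continuous limits, there is a nondegenerate interval $(\frc{j_{0}c}-\varepsilon,\frc{j_{0}c})$ on which $\Phi_{r}>(r+\alpha)/2$. By density of the orbit (indeed equidistribution, giving a positive density of indices), $\theta_{n}$ falls in this interval for infinitely many $n\ge r$. For each such $n$, Proposition~\ref{prop:reduction} gives $a(S(n))\ne n$. The same conclusion follows symmetrically from the infimum side. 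Since the equation then fails at some $n\ge r$, this also yields the converse direction of the equivalence.

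The main obstacle is the boundary case $\Omega_{r}=\alpha$, which the paper says occurs exactly for $r=2$. The key steps there are to identify correctly where $\Phi_{r}$ attains its extrema (at jumps, as left limits or post-jump values) and to check that the orbit never hits those points. The algebraic computation above settles this. Note that the roles of the closed and open ends of the window match: the supremum is only a left limit and is not attained, while the attained infimum lies at the open end but is avoided because the orbit misses the jump points.
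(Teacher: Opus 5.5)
Your proof is correct and follows essentially the same route as the paper: the reduction via Proposition~\ref{prop:reduction}, the palindromic identity $\sup\Phi+\inf\Phi=r$, irrationality of $\alpha$ to show the orbit avoids the jump points in the boundary case, and orbit density on an open interval for infinitely many failures. The only difference is cosmetic. You use $\Omega_{r}=1+\operatorname{osc}P$ from the remark directly, whereas the paper re-derives $\sup\Phi\le(r+\alpha)/2\iff\max P-\min P\le\alpha-1$ from $\sup\Phi=r-\Sigma+\max P$ and Lemma~\ref{lem:walk}(b),(c).
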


\begin{proof}
By Proposition~\ref{prop:reduction}, the equation
$a(S(n))=n$ for all $n\ge r$ is equivalent to
$(r{-}\alpha)/2<\Phi(n)\le(r{+}\alpha)/2$ for all
such~$n$. The value $\Phi(n)$ depends on $n$ only through
$\theta=\{n/\alpha+\alpha/2\}$, and $\theta$ is
equidistributed in $[0,1)$ by Weyl's theorem, using that
$c=1/\alpha$ is irrational. The orbit avoids every jump point
$\{j/\alpha\}$, $0\le j\le r-1$. Indeed, if
$\{n/\alpha+\alpha/2\}=\{j/\alpha\}$, then $(2n-2j+r)/\alpha$
would be an even integer, impossible for $n\ge r$ since
$\alpha=\sqrt{r}$ is irrational and $2n-2j+r>0$. Since $\Phi$
has slope $r$ between consecutive jumps, its infimum is
attained only at jump points, whereas its supremum is
approached from the left of a jump but never attained. Hence
the two bounds hold along the orbit if and only if
\[
  \inf\Phi\ge\frac{r-\alpha}{2}
  \qquad\text{and}\qquad
  \sup\Phi\le\frac{r+\alpha}{2}\,.
\]
Indeed, a strict violation of either inequality persists on a
nonempty open interval, whereas equality at the lower endpoint
can occur only at an avoided jump point. By
Lemma~\ref{lem:palindrome}, $\sup\Phi+\inf\Phi=r$, so these two
weak inequalities are equivalent, and the equation holds if
and only if $\sup\Phi\le(r{+}\alpha)/2$.

By~\eqref{eq:local_max}, $\sup\Phi=r-\Sigma+\max P$. By
Lemma~\ref{lem:walk}(c), $P(r{-}1)=2\Sigma-(r{-}1)$, so
\[
  \Sigma-\frac{r-\alpha}{2}=\frac{\alpha-1+P(r{-}1)}{2},
\]
and Lemma~\ref{lem:walk}(b) gives
$\max P+\min P=P(r{-}1)$. Hence $\sup\Phi\le(r{+}\alpha)/2$ is
equivalent to $2\max P-P(r{-}1)\le\alpha-1$, that is
$\max P-\min P\le\alpha-1$, which is~\eqref{eq:osc_bound}. If
the left-hand side of~\eqref{eq:osc_bound} exceeds $\alpha-1$,
then $\sup\Phi>(r{+}\alpha)/2$, the upper bound on
$\Phi$ fails on a nonempty open set of $\theta$, and
equidistribution produces infinitely many failures.
\end{proof}

The oscillation $\operatorname{osc}P=\max P-\min P$ and the
resulting extrema $\sup\Phi,\inf\Phi$ for the small cases
$r\in\{2,3,5,6,7,8\}$ are collected below. The margin
$\alpha-1-\operatorname{osc}P=\alpha-\Omega_{r}$ is positive
for every non-square $r\ge 3$ and vanishes at the extremal
case $r=2$.

\begin{table}[htbp]
\centering
\small
\begin{tabular}{r|ccc|cc}
\hline
$r$ & $\operatorname{osc}P$ & $\alpha-1$ & margin
  & $\inf\Phi$ & $\sup\Phi$ \\
\hline
$2$ & $0.4142$ & $0.4142$ & $0.0000$ & $0.2929$ & $1.7071$ \\
$3$ & $0.5359$ & $0.7321$ & $0.1962$ & $0.7321$ & $2.2679$ \\
$5$ & $0.9443$ & $1.2361$ & $0.2918$ & $1.5279$ & $3.4721$ \\
$6$ & $0.7526$ & $1.4495$ & $0.6969$ & $2.1237$ & $3.8763$ \\
$7$ & $0.7085$ & $1.6458$ & $0.9373$ & $2.6458$ & $4.3542$ \\
$8$ & $1.2010$ & $1.8284$ & $0.6274$ & $2.8995$ & $5.1005$ \\
\hline
\end{tabular}
\caption{Direct verification for small non-square~$r$. The
strict bound $\operatorname{osc}P<\alpha-1$
(equivalently $\Omega_{r}<\alpha$) holds with the margin
$\alpha-1-\operatorname{osc}P$ shown, positive for the listed
non-square orders $3\le r\le 8$. At $r=2$ the margin is
exactly $0$ ($\Omega_{2}=\sqrt2$), the extremal case. There
$\inf\Phi=(r-\alpha)/2$, attained only at a jump point avoided
by the orbit, and $\sup\Phi=(r+\alpha)/2$, not attained, so the
equation still holds. For the listed non-square orders
$3\le r\le 8$ both extrema lie strictly inside
$((r-\alpha)/2,\,(r+\alpha)/2)$, which proves the non-square
part of Theorem~\ref{thm:universal} for these orders. In all
cases $\sup\Phi+\inf\Phi=r$ (Lemma~\ref{lem:palindrome}). Displayed
values are rounded evaluations of exact quantities in
$\mathbb{Q}(\sqrt{r})$. The tightest case is $r=3$, margin
$\approx 0.196$.}
\label{tab:verification}
\end{table}

\begin{remark}\label{rem:t_one}
The smallest boundary family is $t=1$, that is $r=s^{2}+1$
for some $s\ge 1$ (sequence
\href{https://oeis.org/A002522}{\texttt{A002522}}), the
values for which $\sqrt{r}$ has continued-fraction expansion
$[s;\overline{2s}]$ of minimal period one. For this family
the walk is simplest. Here $\inf\Phi$ is attained at the jump
index $j_{0}=0$, and the oscillation criterion holds
throughout. The values are $r=2,\,5,\,10,\,17,\,26,\,37,
\ldots$
\end{remark}

\subsection{Block structure and reduction}
\label{ssec:blocks}

The bound~\eqref{eq:osc_bound} concerns the oscillation of
the walk~$P$. The steps of~$P$ are governed by the positions
of the points $\frc{kc}$ relative to the interval
$[0,\beta)$, and these positions organize themselves into
blocks indexed by $m=\fl{kc}$. Throughout this subsection $r$
is a non-square, $t=r-s^{2}\in\{1,\ldots,2s\}$, and
$u=\fl{t/2}$. From $\alpha=s+\beta$ and $\alpha^{2}=r$ one has
$\beta(2s+\beta)=t$, hence $\beta=t/(s+\alpha)$. The parity
of~$t$ fixes the floor $\fl{s\beta}$.

\begin{lemma}\label{lem:floor_sbeta}
$\fl{s\beta}=\fl{(t{-}1)/2}$.
\end{lemma}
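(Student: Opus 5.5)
We need $\fl{s\beta}=\fl{(t-1)/2}$, where $\beta=t/(s+\alpha)$ and $1\le t\le 2s$. The plan is to compute $s\beta$ directly and trap it between two explicit bounds, using only $\alpha=s+\beta$ with $0<\beta<1$.

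Write
\[
  s\beta=\frac{st}{s+\alpha}=\frac{st}{2s+\beta}.
\]
Since $0<\beta<1$, the denominator satisfies $2s<2s+\beta<2s+1$. This gives
\[
  \frac{st}{2s+1}<s\beta<\frac{t}{2}.
\]

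For the lower bound, rewrite
\[
  \frac{st}{2s+1}=\frac{t}{2}-\frac{t}{2(2s+1)}.
\]
Because $t\le 2s$, we have $t/(2(2s+1))<1/2$. Hence $s\beta$ lies strictly inside the open interval $\bigl((t-1)/2,\;t/2\bigr)$.

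Now split by the parity of $t$.

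\emph{Odd $t$.} Then $(t-1)/2$ is an integer and $t/2$ is that integer plus $1/2$. So $\fl{s\beta}=(t-1)/2=\fl{(t-1)/2}$.

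\emph{Even $t$.} Then $t/2$ is an integer, and $s\beta$ lies strictly below it but above $(t-1)/2=t/2-1/2$. So $\fl{s\beta}=t/2-1=\fl{(t-1)/2}$.

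The only delicate point is strictness at the two ends, and both ends are secured:
\begin{itemize}
\item[] the upper end $s\beta<t/2$ holds because $\beta>0$, which follows from $r$ being a non-square;
\item[] the lower end holds because $\beta<1$ and $t\le 2s$; at $t=2s$ it gives $s\beta>s-\tfrac12+\tfrac{1}{2(2s+1)}$, still strictly above $s-\tfrac12$.
\end{itemize}
No further obstacle is expected; the whole argument is this elementary estimate.
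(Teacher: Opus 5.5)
Your proof is correct and follows the paper's argument: the paper also bounds $s\beta$ via $t/(2s+1)<\beta<t/(2s)$ and checks $st/(2s+1)>(t-1)/2$ using $t\le 2s$. It then concludes from $(t-1)/2<s\beta<t/2$, and your parity split only spells out that final floor step, which the paper leaves implicit.
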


\begin{proof}
Since $\beta=t/(s+\alpha)$ and
$s+\alpha\in(2s,2s{+}1)$, we obtain
\[
  \frac{t}{2s{+}1}<\beta<\frac{t}{2s}.
\]
Multiplying by~$s$:
\[
  \frac{st}{2s{+}1}<s\beta<\frac{t}{2}.
\]
Since
\[
  \frac{st}{2s{+}1}-\frac{t{-}1}{2}
  =\frac{2s{-}t{+}1}{2(2s{+}1)}>0
\]
(as $t\le 2s$), we obtain
$(t{-}1)/2<s\beta<t/2$, whence
$\fl{s\beta}=\fl{(t{-}1)/2}$.
\end{proof}

The block decomposition of the walk is made precise by the
following.

\begin{lemma}\label{lem:blocks}
Let $1\le k\le r-1$ and $m=\fl{kc}$, so that $0\le m\le s$.
\begin{enumerate}
\item[\textup{(a)}] $\frc{kc}<\beta$ if and only if
$k<m\alpha+\alpha\beta$. Within each block
$\{k:\fl{kc}=m\}$, the indices with $\frc{kc}<\beta$ form an
initial run.
\item[\textup{(b)}] The run in block $m$ has length
$C_{m}=u+c_{m}$, where $c_{m}=1$ if
$\frc{m\beta}\ge\frc{s\beta}$ and $c_{m}=0$ otherwise.
\item[\textup{(c)}] Set $H(M):=P(\fl{M\alpha})$ for the
integers $M\ge 0$ with $\fl{M\alpha}\le r-1$, and
$\widetilde{C}_{M}:=\min\bigl(C_{M},\,r-1-\fl{M\alpha}\bigr)$.
Then
\[
  \max_{0\le j\le r-1}P(j)=\max_{M}H(M),
  \qquad
  \min_{0\le j\le r-1}P(j)
  =\min_{M}\bigl(H(M)-(1-\beta)\widetilde{C}_{M}\bigr).
\]
\end{enumerate}
\end{lemma}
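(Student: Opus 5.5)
For part (a) I would write $kc=m+\frc{kc}$. Then $\frc{kc}<\beta$ is equivalent to $kc<m+\beta$, that is $k<\alpha(m+\beta)=m\alpha+\alpha\beta$. The block $\{k:\fl{kc}=m\}$ consists of the integers $k$ in $[m\alpha,(m+1)\alpha)$, with $k\ge 1$ when $m=0$. Since $\alpha\beta<\alpha$, the condition cuts this block at the point $m\alpha+\alpha\beta$, so the qualifying indices form an initial run.

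For part (b) I would count the integers in $[m\alpha,m\alpha+\alpha\beta)$. The key identity is $\alpha\beta=s\beta+\beta^{2}=t-s\beta$, which follows from $\beta(2s+\beta)=t$. Write $m\alpha=ms+m\beta$ and use the count $\lceil x+L\rceil-\lceil x\rceil$ for integers in $[x,x+L)$. With $L=t-\fl{s\beta}-\frc{s\beta}$, the count becomes
\[
\bigl\lceil t-\fl{s\beta}+(\frc{m\beta}-\frc{s\beta})\bigr\rceil-\lceil\frc{m\beta}\rceil .
\]
There are three cases.
\begin{enumerate}
\item[(i)] For $1\le m<s$, $\frc{m\beta}\in(0,1)$ and $\frc{m\beta}\ne\frc{s\beta}$ by irrationality. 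The count is $t-\fl{s\beta}-1+[\frc{m\beta}>\frc{s\beta}]$.
\item[(ii)] For $m=s$ the difference vanishes, and the count is $t-\fl{s\beta}-1+1$, which matches $c_{s}=1$ because the definition uses $\ge$. The right endpoint $s\alpha+\alpha\beta=r$ is an integer and is correctly excluded.
\item[(iii)] For $m=0$ we count integers in $(0,\alpha\beta)$. This gives $t-\fl{s\beta}-1$, matching $c_{0}=0$.
\end{enumerate}
Finally, Lemma~\ref{lem:floor_sbeta} gives $t-\fl{s\beta}-1=t-\fl{(t-1)/2}-1=\fl{t/2}=u$ in both parities. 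Hence $C_{m}=u+c_{m}$.

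For part (c), by (a) and Lemma~\ref{lem:walk}(a) the walk within each block first takes $C_{m}$ steps $\beta-1<0$ and then only steps $\beta>0$. The last index before block $M$ is $\lceil M\alpha\rceil-1=\fl{M\alpha}$. So on each block range $[\fl{M\alpha},\fl{(M+1)\alpha}]$, $P$ is V-shaped. Its maximum is at an endpoint $H(M)$ or $H(M+1)$, and its minimum is at $\fl{M\alpha}+C_{M}$, with value $H(M)-(1-\beta)C_{M}$.

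The step needing care, and the main obstacle, is the final partial block, from $\fl{s\alpha}$ to $r-1$; truncation at $r-1$ explains $\widetilde C_{M}$. I would use the palindromic symmetry $\Delta_{k}=\Delta_{r-k}$ from Lemma~\ref{lem:walk}(b). For $k\in(\fl{s\alpha},r-1]$, the mirrored index is $r-k\le r-\fl{s\alpha}-1<r-s\alpha=\alpha\beta$. Hence $r-k$ lies in the initial run of block $0$, and every step of the final segment equals $\beta-1$. So on that segment $P$ only decreases. Its maximum is $H(s)$, and its minimum is at $r-1=\fl{s\alpha}+\widetilde C_{s}$, which is consistent with the truncated formula. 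Taking maxima and minima over all blocks yields both identities in (c).
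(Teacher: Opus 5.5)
You have the paper's route for parts (a) and (c) and for cases (i) and (iii) of (b): the identities $\alpha\beta=t-s\beta$ and $\{m\alpha\}=\{m\beta\}$, Lemma~\ref{lem:floor_sbeta}, and the V-shape of $P$ on each block. Your ceiling count over $[m\alpha,\,m\alpha+\alpha\beta)$ agrees with the paper's floor count $\lfloor m\alpha+\alpha\beta\rfloor-\lfloor m\alpha\rfloor$ whenever $m\alpha+\alpha\beta\notin\mathbb{Z}$, that is, for $m\neq s$.

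Case (ii) is wrong, however. For $m=s$ your own formula gives
\[
  \bigl\lceil t-\lfloor s\beta\rfloor\bigr\rceil-\bigl\lceil\{s\beta\}\bigr\rceil
  =t-\lfloor s\beta\rfloor-1=u,
\]
not $u+1$. The extra $+1$ has no source. Your correct remark that the integer endpoint $r$ is excluded is exactly why the count is $u$.

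The run in block $s$ is $\lceil s\alpha\rceil,\dots,r-1$, which has $r-1-\lfloor s\alpha\rfloor=u$ elements. The index $k=r$ would not join it even without the restriction $k\le r-1$, since $\{rc\}=\beta$. So at $m=s$ the value $u+c_{s}=u+1$ is not the run length. It is the paper's floor count over $(s\alpha,\,r]$, which includes the phantom index $r$. Defining $c_{m}$ by the strict inequality $\{m\beta\}>\{s\beta\}$ would make (b) exact for all $0\le m\le s$; this agrees with $\ge$ for $m\neq s$ by irrationality. You should record this discrepancy rather than force agreement with $c_{s}=1$.

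Nothing downstream is affected. Part (c) uses only $\widetilde{C}_{s}=\min(C_{s},\,r-1-\lfloor s\alpha\rfloor)$, which equals $u$ whether $C_{s}$ is $u$ or $u+1$. Your symmetry argument via $\Delta_{k}=\Delta_{r-k}$ already shows that the final segment consists of exactly $r-1-\lfloor s\alpha\rfloor$ steps $\beta-1$. This also follows directly from (a), since every in-range $k$ of block $s$ satisfies $k<r=s\alpha+\alpha\beta$. Lemma~\ref{lem:secondscale} uses $C_{m}$ only for $m\le s-1$, and the later estimate \eqref{eq:boundary_target} uses only $\widetilde{C}_{M}\le u+1$.
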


\begin{proof}
(a) The condition $\frc{kc}<\beta$ reads $kc-m<\beta$, that
is, $k<(m+\beta)\alpha=m\alpha+\alpha\beta$.

(b) The run length is
$C_{m}=\fl{m\alpha+\alpha\beta}-\fl{m\alpha}
=\fl{\alpha\beta}+\chi$, where $\chi=1$ exactly
when $\frc{m\alpha}\ge 1-\frc{\alpha\beta}$. Since
$\alpha\beta=t-s\beta$, we have
$\frc{\alpha\beta}=\frc{-s\beta}=1-\frc{s\beta}$, so the
condition reads $\frc{m\alpha}\ge\frc{s\beta}$, and
$\frc{m\alpha}=\frc{m\beta}$ because $\alpha=s+\beta$.
Finally
$\fl{\alpha\beta}=t-\lceil s\beta\rceil=t-\fl{s\beta}-1
=\fl{t/2}$ by Lemma~\ref{lem:floor_sbeta}.

(c) Inside block $m$, the walk takes $C_{m}$ consecutive
steps $\beta-1$ followed by steps $\beta$, by (a). Its local
maxima therefore occur at the block boundaries
$\fl{M\alpha}$, with values $H(M)$, and its local minima at
the ends of the initial runs, with values
$H(M)-(1-\beta)C_{M}$. In the last block the run may be
truncated at $r-1$, whence $\widetilde{C}_{M}$.
\end{proof}

Summing the block contributions relates the values of~$H$ at
distant indices.

\begin{lemma}\label{lem:secondscale}
For all integers $0\le M_{1}<M_{2}$ with
$\fl{M_{2}\alpha}\le r-1$, write $W=M_{2}-M_{1}$. Then
\begin{equation}\label{eq:secondscale}
  H(M_{2})-H(M_{1})
  =W\bigl(s\beta-t+\fl{s\beta}\bigr)
  +N_{2}(M_{1},M_{2})
  +\beta\bigl(\fl{M_{2}\beta}-\fl{M_{1}\beta}\bigr),
\end{equation}
where
$N_{2}(M_{1},M_{2})
=\#\{M_{1}\le j<M_{2}:\frc{j\beta}<\frc{s\beta}\}$.
The mean value of the summand is zero, and the walk
$M\mapsto H(M)$ is driven by the rotation by~$\beta$ and the
interval $[0,\frc{s\beta})$, exactly as the walk $P$ is
driven by the rotation by~$c$ and the interval
$[0,\beta)=[0,\frc{rc})$.
\end{lemma}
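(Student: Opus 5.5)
The plan is to obtain \eqref{eq:secondscale} by telescoping $H$ over consecutive blocks and computing each one-block increment exactly, using Lemma~\ref{lem:walk}(a) and Lemma~\ref{lem:blocks}(b).

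\textbf{Step 1: the block index ranges.} Since $\alpha$ is irrational, $\fl{kc}=m$ holds if and only if $m\alpha<k<(m+1)\alpha$ for $m\ge1$, and if and only if $1\le k<\alpha$ for $m=0$ (the index $k=0$ is excluded from the walk). In either case block~$m$ is the integer range $\fl{m\alpha}<k\le\fl{(m+1)\alpha}$. So for every $M$ with $\fl{(M{+}1)\alpha}\le r-1$,
\[
  H(M{+}1)-H(M)=\sum_{\fl{M\alpha}<k\le\fl{(M+1)\alpha}}\Delta_{k}.
\]

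\textbf{Step 2: one-block increment.} Let $L_{M}=\fl{(M{+}1)\alpha}-\fl{M\alpha}$ be the block length. By Lemma~\ref{lem:walk}(a) the increment equals $\beta L_{M}-C_{M}$, where $C_{M}$ is the number of indices in the block with $\frc{kc}<\beta$. Since $\alpha=s+\beta$, we have $L_{M}=s+\fl{(M{+}1)\beta}-\fl{M\beta}$. By Lemma~\ref{lem:blocks}(b), $C_{M}=u+c_{M}$. The last block may be truncated, but this does not matter here: $M_{2}$ is only required to satisfy $\fl{M_{2}\alpha}\le r-1$, so every block strictly between $M_{1}$ and $M_{2}$ is complete.

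\textbf{Step 3: summation.} Summing over $M_{1}\le M<M_{2}$, the $\fl{M\beta}$ terms telescope to $\beta(\fl{M_{2}\beta}-\fl{M_{1}\beta})$. The count $\#\{M:c_{M}=1\}$ equals $W-N_{2}(M_{1},M_{2})$, since $c_{M}=1$ exactly when $\frc{M\beta}\ge\frc{s\beta}$. This gives
\[
  H(M_{2})-H(M_{1})=W(s\beta-u-1)+N_{2}+\beta\bigl(\fl{M_{2}\beta}-\fl{M_{1}\beta}\bigr).
\]
It remains to check $u+1=t-\fl{s\beta}$. By Lemma~\ref{lem:floor_sbeta}, $\fl{s\beta}=\fl{(t-1)/2}$. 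For $t=2u$ this gives $t-\fl{s\beta}=2u-(u-1)=u+1$, and for $t=2u+1$ it gives $2u+1-u=u+1$. This is \eqref{eq:secondscale}.

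\textbf{Step 4: mean zero and the analogy.} By Weyl equidistribution of $j\beta$, $N_{2}$ has density $\frc{s\beta}$ per step, and the last term has density $\beta^{2}$ per step. The mean per step is therefore
\[
  s\beta-t+\fl{s\beta}+\frc{s\beta}+\beta^{2}=2s\beta+\beta^{2}-t=0,
\]
by $\beta(2s+\beta)=t$. The structural analogy is read off directly: the summand is an affine function of the indicator of the event that the rotation by $\beta$ lands in $[0,\frc{s\beta})$, plus the carry of the $\beta$-rotation, in the same way as $\Delta_{k}$ for $P$.

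\textbf{Main obstacle.} The only delicate step is the boundary bookkeeping in Step~1: checking that $P(\fl{M\alpha})$ covers exactly blocks $0,\dots,M-1$, including the special block $m=0$ that starts at $k=1$.
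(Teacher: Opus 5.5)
Your proof is correct and follows essentially the same route as the paper. You write each block increment as $\beta L_M - C_M$ and telescope the block lengths to $Ws+\fl{M_2\beta}-\fl{M_1\beta}$. You use $C_M=u+c_M$ from Lemma~\ref{lem:blocks}(b) to get $\sum C_M=Wu+W-N_2$, rewrite $u+1=t-\fl{s\beta}$ via Lemma~\ref{lem:floor_sbeta}, and compute the mean from the densities $\frc{s\beta}$ and $\beta^2$.

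Your Step~1 only makes explicit what the paper leaves implicit. The blocks are $M_1,\dots,M_2-1$, including $M_1$ itself rather than only those ``strictly between''. Each of them ends at or before $\fl{M_2\alpha}\le r-1$, so none is truncated.
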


\begin{proof}
Summing the steps of $P$ over the blocks
$M_{1},\ldots,M_{2}-1$ gives
\[
  H(M_{2})-H(M_{1})
  =\sum_{m=M_{1}}^{M_{2}-1}\bigl(B_{m}\beta-C_{m}\bigr),
  \qquad
  B_{m}:=\fl{(m{+}1)\alpha}-\fl{m\alpha}.
\]
The block sizes telescope, and $\alpha=s+\beta$ gives
$\sum_{m=M_{1}}^{M_{2}-1}B_{m}
=Ws+\fl{M_{2}\beta}-\fl{M_{1}\beta}$.
By Lemma~\ref{lem:blocks}(b),
$\sum_{m=M_{1}}^{M_{2}-1}C_{m}
=Wu+W-N_{2}(M_{1},M_{2})$.
Combining the two sums and using
$s\beta-u-1=s\beta-t+\fl{s\beta}$, which follows from
$\fl{s\beta}=\fl{(t-1)/2}$ and $u=\fl{t/2}$,
yields~\eqref{eq:secondscale}. The mean of the right-hand side
per step of~$M$ is
$(s\beta-t+\fl{s\beta})+\frc{s\beta}+\beta^{2}
=2s\beta+\beta^{2}-t=0$, by $\beta(2s+\beta)=t$.
\end{proof}

The second-scale walk of Lemma~\ref{lem:secondscale} admits a
closed companion, the sliding window sum of $s$ consecutive
fractional parts of the $\beta$-rotation.

\begin{lemma}\label{lem:window}
For $0\le A\le s$, set
$E(A)=\sum_{j=A-s}^{A-1}\frc{j\beta}$.
\begin{enumerate}
\item[\textup{(a)}] The quantity
$H(A)+E(A)+\beta\frc{A\beta}$ does not depend on~$A$. In
particular,
\[
  \max_{M}H-\min_{M}H
  \;\le\;
  \max_{0\le A\le s}E-\min_{0\le A\le s}E+\beta.
\]
\item[\textup{(b)}] $E(A)+E(s{+}1{-}A)=s-1$ for
$1\le A\le s$.
\end{enumerate}
\end{lemma}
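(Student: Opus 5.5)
We prove both parts by direct computation, using Lemma~\ref{lem:secondscale} for the increments of~$H$ in (a) and the reflection $\frc{-x}=1-\frc{x}$ in (b). The one point needing care in (a) is that each index used is admissible; the rest is bookkeeping with fractional parts.

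For (a) we show the one-step increment of $F(A):=H(A)+E(A)+\beta\frc{A\beta}$ vanishes for $0\le A\le s-1$. We first check that $A+1\le s$ is admissible, i.e.\ $\fl{s\alpha}\le r-1$. Since $s\alpha=s^{2}+s\beta$ and $s\beta<t/2$ by Lemma~\ref{lem:floor_sbeta}, for $t\ge 2$ we get $s\beta<t-1$, and for $t=1$ we get $\fl{s\alpha}=s^{2}=r-1$. Lemma~\ref{lem:secondscale} with $M_{1}=A$, $M_{2}=A+1$ gives
\[
H(A{+}1)-H(A)=\bigl(s\beta-t+\fl{s\beta}\bigr)+\mathbf{1}\bigl[\frc{A\beta}<\frc{s\beta}\bigr]+\beta\bigl(\fl{(A{+}1)\beta}-\fl{A\beta}\bigr).
\]
Since $\frc{(A{+}1)\beta}-\frc{A\beta}=\beta-\bigl(\fl{(A{+}1)\beta}-\fl{A\beta}\bigr)$, adding the increment of $\beta\frc{A\beta}$ cancels the floor terms and leaves $\beta^{2}$. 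Next, $E(A{+}1)-E(A)=\frc{A\beta}-\frc{(A{-}s)\beta}$. We also have $\frc{(A{-}s)\beta}=\frc{A\beta}-\frc{s\beta}+\mathbf{1}[\frc{A\beta}<\frc{s\beta}]$. Hence this increment equals $\frc{s\beta}-\mathbf{1}[\frc{A\beta}<\frc{s\beta}]$, and the indicator cancels against the one in the $H$-increment. The total increment is
\[
s\beta-t+\fl{s\beta}+\frc{s\beta}+\beta^{2}=2s\beta+\beta^{2}-t=0,
\]
using $\beta(2s+\beta)=t$, which is the mean-zero computation of Lemma~\ref{lem:secondscale}. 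So $F$ is constant.

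For the oscillation bound we need that the admissible $M$ are exactly $0\le M\le s$. This holds because $(s{+}1)\alpha>s^{2}+s+s\beta\ge r-1$ when $t\le 2s$; we check this edge inequality from $\beta=t/(s+\alpha)$. Then $H(M)=F-E(M)-\beta\frc{M\beta}$. Since $\beta\frc{M\beta}\in[0,\beta)$, we get $\max H-\min H\le\max E-\min E+\beta$.

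For (b), substitute $j=-i$ in $E(s{+}1{-}A)=\sum_{j=1-A}^{s-A}\frc{j\beta}$. This gives $\sum_{i=A-s}^{A-1}\frc{-i\beta}$, which runs over the same index range as $E(A)$. Hence $E(A)+E(s{+}1{-}A)=\sum_{i=A-s}^{A-1}\bigl(\frc{i\beta}+\frc{-i\beta}\bigr)$. Each summand equals $1$ unless $i\beta\in\mathbb{Z}$, which by irrationality of~$\beta$ happens only for $i=0$. For $1\le A\le s$ the range $[A-s,A-1]$ contains $0$ and has $s$ terms, so the sum is $s-1$.
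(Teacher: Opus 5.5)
Your proof is correct and follows the paper's argument essentially step for step: the same one-step increment computation via Lemma~\ref{lem:secondscale}, with the indicator and floor terms cancelling, and the same reflection $j\mapsto -j$ for (b). Your admissibility checks are a welcome addition that the paper leaves implicit, with one slip: to exclude $M=s+1$ you need $\lfloor (s+1)\alpha\rfloor\ge r$, but your chain only gives $(s+1)\alpha>r-1$; the needed bound follows at once from $(s+1)\alpha>\alpha^{2}=r$, since $s+1>\alpha$.
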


\begin{proof}
(a) Fix $0\le A\le s-1$, and write $\chi_{A}=1$ if
$\frc{A\beta}<\frc{s\beta}$ and $\chi_{A}=0$ otherwise.
From $(A-s)\beta=A\beta-s\beta$ we obtain
$\frc{(A-s)\beta}=\frc{A\beta}-\frc{s\beta}+\chi_{A}$, so
\[
  E(A{+}1)-E(A)
  =\frc{A\beta}-\frc{(A{-}s)\beta}
  =\frc{s\beta}-\chi_{A}.
\]
Also
$\beta\frc{(A{+}1)\beta}-\beta\frc{A\beta}
=\beta^{2}-\beta\bigl(\fl{(A{+}1)\beta}-\fl{A\beta}\bigr)$,
and~\eqref{eq:secondscale} with $(M_{1},M_{2})=(A,A{+}1)$ gives
\[
  H(A{+}1)-H(A)
  =\bigl(s\beta-t+\fl{s\beta}\bigr)+\chi_{A}
  +\beta\bigl(\fl{(A{+}1)\beta}-\fl{A\beta}\bigr).
\]
Summing the three increments, the indicator and the floor
terms cancel, and the total is
$(s\beta-t+\fl{s\beta})+\frc{s\beta}+\beta^{2}
=2s\beta+\beta^{2}-t=0$. The oscillation bound follows from
$0\le\frc{A\beta}<1$.

(b) The change of index $j\mapsto-j$ maps the window of
$E(s{+}1{-}A)$ onto the window of $E(A)$. For $j\neq 0$ we
have $\frc{-j\beta}=1-\frc{j\beta}$, while the term $j=0$
contributes $0$ to both sums and lies in both windows
exactly when $1\le A\le s$. Hence
$E(s{+}1{-}A)=(s-1)-E(A)$.
\end{proof}

For the four boundary values of $t$, the window sum $E$ is
piecewise linear in $A$, and its oscillation admits a closed
form.

\begin{lemma}\label{lem:boundary}
Let $r=s^{2}+t$ be a non-square with $s\ge 3$ and
$t\in\{1,\,2,\,2s{-}1,\,2s\}$, and suppose $r\neq 10$. Then
\[
  \max_{0\le j\le r-1}P(j)-\min_{0\le j\le r-1}P(j)
  <\alpha-1,
\]
and $a(S(n))=n$ for all $n\ge r$.
\end{lemma}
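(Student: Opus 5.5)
Since $E$ is piecewise linear in $A$ for these four values of $t$, I will compute the oscillation of $E$ (or of $H$ directly) in closed form, then bound $\operatorname{osc}P$ by Lemma~\ref{lem:blocks}(c). The inequality~\eqref{eq:osc_bound} will then give the equation through Proposition~\ref{prop:oscillation}. The key feature is that $s\beta$ is close to $t/2$ by Lemma~\ref{lem:floor_sbeta}. For $t=1$ we have $\beta\approx 1/(2s)$, so $\{j\beta\}=j\beta$ for $|j|\le s$ and no wrap-around occurs. For $t=2$ the same holds with $\beta\approx 1/s$. For $t\in\{2s-1,2s\}$, $\beta$ is close to $1$, and I will work with $1-\beta$, writing $\{j\beta\}=1-j(1-\beta)$ for $1\le j\le s$, which is again linear. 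In each case I first list the exact values of $\{s\beta\}$ and $\lfloor s\beta\rfloor$, and the indices $j\in[A-s,A-1]$ where $\{j\beta\}$ wraps. Then $E(A)$ becomes an explicit quadratic/linear expression in $A$ and $\beta$.

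Next I will compute $\max E-\min E$ over $0\le A\le s$, using the symmetry $E(A)+E(s+1-A)=s-1$ of Lemma~\ref{lem:window}(b). By that symmetry it suffices to locate the maximum, which should be at an endpoint or near $A\approx s/2$. Lemma~\ref{lem:window}(a) then bounds $\operatorname{osc}H$ by $\operatorname{osc}E+\beta$. For the minimum of $P$, Lemma~\ref{lem:blocks}(c) says the local minima lie below the $H$-values by at most $(1-\beta)\widetilde C_M\le(1-\beta)(u+1)$. Adding this gives
\[
\operatorname{osc}P\le \operatorname{osc}E+\beta+(1-\beta)(u+1).
\]
For $t$ near $2s$, $u\approx s$ while $1-\beta$ is small, of order $1/s$. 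For $t\in\{1,2\}$, $u\le1$. In both regimes this correction is $O(1)$.

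The target is $\alpha-1=s+\beta-1$. For $t\in\{1,2\}$, I expect $E(A)$ to be roughly $(s-1)/2$ plus a term of size $O(1)$. Its oscillation should then be far below $s-1$, leaving a margin of order $s$, so $s\ge3$ suffices after checking constants. The delicate cases are $t=2s-1$ and $t=2s$. There most points $\{j\beta\}$ with $j<0$ are small and those with $j>0$ are near $1$, so $E(A)$ grows linearly in $A$ with slope near $1$, and $\operatorname{osc}E$ can be close to $s-1$. The crude bound $\operatorname{osc}E+\beta+(1-\beta)(u+1)$ would then exceed $\alpha-1$. So for these $t$ I will not pass through the estimate of Lemma~\ref{lem:window}(a). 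Instead I will compute $H(M)$ exactly from~\eqref{eq:secondscale}: with $\beta$ near $1$, $\lfloor M\beta\rfloor=M-1$ and $N_2$ is explicit. I will then track the exact local minima $H(M)-(1-\beta)\widetilde C_M$, keeping track of the truncated last block. Finally I will compare the exact oscillation, an element of $\mathbb{Q}(\sqrt r)$, with $s+\beta-1$, using $\beta=t/(s+\alpha)$.

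I expect the main obstacle to be this exact bookkeeping for $t\in\{2s-1,2s\}$. The margin there is likely only of order $\beta$ or $1-\beta$, not of order $s$, and the parity of $t$ (through $u$ and $c_m$) matters. Small $s$ may fail the asymptotic inequality; that is presumably why $r=10$ is excluded and $s\ge3$ is assumed. I would check any residual small cases numerically against Table~\ref{tab:verification}.
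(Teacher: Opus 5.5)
Your reduction is the paper's. Lemma~\ref{lem:blocks}(c) and Lemma~\ref{lem:window}(a) give $\operatorname{osc}P\le\operatorname{osc}E+\beta+(1-\beta)(u+1)$, so everything hinges on the target~\eqref{eq:boundary_target}, $\operatorname{osc}E+(1-\beta)(u+1)<s-1$. The gap is that you never evaluate $E$, and the guesses you use instead are wrong in a way that sends the argument off course. For $t\in\{2s-1,2s\}$ put $\gamma=1-\beta$. Then $s\gamma<1$, since $\gamma=1/((s+1)+\alpha)<1/(2s+1)$ for $t=2s$ and $\gamma=2/((s+1)+\alpha)<1/s$ for $t=2s-1$. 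Hence $\frc{j\beta}=|j|\gamma$ for $-s\le j\le-1$ and $\frc{j\beta}=1-j\gamma$ for $1\le j\le s$. Counting the $s-A$ negative and $A-1$ positive indices in the window gives
\[
  E(A)=(A-1)-\gamma\,\frac{s(2A-s-1)}{2}\quad(1\le A\le s),
  \qquad E(0)=\gamma\,\frac{s(s+1)}{2}.
\]
The slope is $1-s\gamma$. It is $<1/(s+1)$ for $t=2s-1$ and lies in $(1/2,5/8)$ for $t=2s$, $s\ge 3$; it is not near $1$. So $\operatorname{osc}E=\max\bigl(\gamma s,(s-1)(1-s\gamma)\bigr)$; note that $E(0)$, which your symmetry $E(A)+E(s+1-A)=s-1$ does not cover, supplies the $\gamma s$ branch. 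With $u+1=s$, resp.\ $s+1$, the target reduces to two inequalities in each case:
\begin{itemize}
\item $t=2s-1$: $2\gamma s<s-1$ and $\gamma s(s-2)>0$;
\item $t=2s$: $\gamma(2s+1)<s-1$ and $s+1<s(s-1)$.
\end{itemize}
All four hold for $s\ge 3$, and this is where $s\ge 3$ is used. So the crude bound does \emph{not} fail for these families, and the margin grows linearly in $s$, not like $\beta$ or $1-\beta$. The exact bookkeeping of $H(M)$ and its truncated minima, which you call the main obstacle, is unnecessary. Since you only announce it, your proposal as written contains no argument for $t\in\{2s-1,2s\}$.

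For $t\le 2$ your outline matches the paper, but two claims are wrong. First, $\frc{j\beta}=j\beta$ holds only for $0\le j\le s$. For $-s\le j\le-1$ one has $\frc{j\beta}=1+j\beta$, and every window of $E$ contains such indices. With this correction, $E(A)=(s-A)+\beta s(2A-s-1)/2$ is linear with slope $s\beta-1$, so $\operatorname{osc}E=s(1-s\beta)$. Second, for $t=1$ this equals $s\alpha/(s+\alpha)\approx s/2$, not $O(1)$. The target then becomes $\beta(s^{2}+1)>2$, that is $s^{2}+1>2(s+\alpha)$, which is false at $s=3$. Thus $r=10$ belongs to the $t=1$ family, and that is exactly why it is excluded. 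Your claims that $s\ge 3$ suffices for $t\in\{1,2\}$ and that the exclusion comes from the large-$t$ families are both mistaken. For $t=2$ the target $2(s^{2}+2)>3(s+\alpha)$ does hold for $s\ge 3$. Finally, Table~\ref{tab:verification} only covers $r\le 8$, i.e.\ $s\le 2$, so it cannot serve as a numerical fallback for any case of this lemma.
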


\begin{proof}
By Lemma~\ref{lem:blocks} and Lemma~\ref{lem:window}(a), the
bound $\max P-\min P<\alpha-1$ follows from
\begin{equation}\label{eq:boundary_target}
  \operatorname{osc}E+(1-\beta)(u+1)<s-1,
  \qquad
  \operatorname{osc}E
  :=\max_{0\le A\le s}E-\min_{0\le A\le s}E.
\end{equation}
Appendix~\ref{app:analytic} evaluates $\operatorname{osc}E$
exactly in each of the four families and verifies
\eqref{eq:boundary_target}.
Proposition~\ref{prop:oscillation} then establishes the equation.
\end{proof}

The middle range of $t$ requires a bound on the oscillation
of $E$ for a general rotation, obtained in
Appendix~\ref{app:analytic} through the block structure of the
orbit (Lemma~\ref{lem:blockdev}).

\begin{lemma}\label{lem:middle}
Let $r=s^{2}+t$ be a non-square with $3\le t\le 2s-2$ and
$s\ge 74$. Then
\[
  \max_{0\le j\le r-1}P(j)-\min_{0\le j\le r-1}P(j)
  <\alpha-1,
\]
and $a(S(n))=n$ for all $n\ge r$.
\end{lemma}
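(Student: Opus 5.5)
The plan follows the proof of Lemma~\ref{lem:boundary}. By Lemma~\ref{lem:blocks}(c), $\max P=\max_M H(M)$ and $\min P\ge\min_M H(M)-(1-\beta)\max_M\widetilde C_M$. Since $C_m=u+c_m\le u+1$ by Lemma~\ref{lem:blocks}(b), this gives
\[
  \operatorname{osc}P\le\operatorname{osc}H+(1-\beta)(u+1).
\]
Lemma~\ref{lem:window}(a) gives $\operatorname{osc}H\le\operatorname{osc}E+\beta$. The equation therefore follows from the target inequality~\eqref{eq:boundary_target}, $\operatorname{osc}E+(1-\beta)(u+1)<s-1$, and then from Proposition~\ref{prop:oscillation}. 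Everything reduces to an upper bound on $\operatorname{osc}E$ for the window sum of $s$ consecutive fractional parts of the $\beta$-rotation, with $\beta=t/(s+\alpha)\in(t/(2s+1),t/(2s))$ and $3\le t\le 2s-2$.

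To bound $\operatorname{osc}E$ I would write $E(A)=\sum_{j=A-s}^{A-1}\frc{j\beta}$. Its increments are $E(A{+}1)-E(A)=\frc{s\beta}-\chi_A$, where $\chi_A$ indicates $\frc{A\beta}<\frc{s\beta}$ (computed in Lemma~\ref{lem:window}). So $E$ is itself a discrepancy walk of the rotation by $\beta$ against the interval $[0,\frc{s\beta})$, observed over only $s+1$ steps. Over the range $0\le A\le s$ the orbit $\{A\beta\}$ makes about $s\beta\approx t/2$ full turns. Cut $[0,s]$ into the blocks between successive wraps of the orbit, each of length $\lfloor 1/\beta\rfloor$ or $\lceil 1/\beta\rceil$. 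In each block the orbit increases linearly by $\beta$, so $\chi_A$ is an initial run there. This is the same initial-run structure as in Lemma~\ref{lem:blocks}(a), one scale down. The intended block-deviation estimate (Lemma~\ref{lem:blockdev}) says two things. First, within one block, $E$ moves by at most about $\frc{s\beta}/\beta+1$. Second, the net drift over each complete block is bounded by a constant: the block sums of $\frc{s\beta}-\chi_A$ are of the form $\ell\frc{s\beta}-\lceil\cdot\rceil$ with $\ell\beta\approx1$. Using the symmetry $E(A)+E(s{+}1{-}A)=s-1$ from Lemma~\ref{lem:window}(b), it is enough to control $\max E-(s-1)/2$ over half the range. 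This should yield a bound of the shape $\operatorname{osc}E\le C_1\min(1/\beta,\,s\beta)+C_2$, or more crudely $\operatorname{osc}E\le \tfrac{s}{2}+O(1)$ uniformly.

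Next compare with the available room. Since $u=\lfloor t/2\rfloor$ and $\beta\approx t/(2s)$, the loss term is
\[
  (1-\beta)(u+1)\approx\Bigl(1-\tfrac{t}{2s}\Bigr)\Bigl(\tfrac t2+1\Bigr),
\]
which peaks near $t=s$ at about $s/4+O(1)$. The block estimate has to show that $\operatorname{osc}E$ is small exactly where this term is large, and vice versa. For $t$ near $s$, $\beta\approx1/2$, the blocks have length about $2$, and the walk $E$ has bounded oscillation. For small $t$ the blocks are long (about $2s/t$), but only about $t/2$ of them fit in the range, and $\operatorname{osc}E\lesssim\frc{s\beta}/\beta+O(1)\le s/t+O(1)$, while $(u+1)$ is small. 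Making this explicit gives an inequality of the form
\[
  \operatorname{osc}E+(1-\beta)(u+1)\le\lambda s+K\qquad(3\le t\le 2s-2),
\]
with explicit constants $\lambda<1$ and $K$. This is $<s-1$ once $s\ge (K+1)/(1-\lambda)$. I expect the paper's constants to give the threshold $s\ge74$, with smaller $s$ left to the finite exact computation in $\mathbb{Z}[\sqrt r]$.

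The main obstacle is the uniform block-deviation bound for $E$ when $\beta$ is neither small nor close to $1/2$. There $1/\beta$ is a moderate non-integer, the blocks have two lengths, and their pattern is itself Sturmian. The accumulated drift over up to $t/2$ blocks could then grow, like a discrepancy. First I would bound the drift by the three-distance/Ostrowski structure of $\beta$ truncated at $s$ terms, using that $s\beta<t/2$ limits the number of blocks. If that is too weak, I would exploit $\beta(2s+\beta)=t$ to locate $\frc{s\beta}\in\bigl((t-1)/2-\lfloor (t-1)/2\rfloor,\ \cdot\,\bigr)$ via Lemma~\ref{lem:floor_sbeta}, so that the per-block drift has a definite sign pattern that can be summed in closed form.
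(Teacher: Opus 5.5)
Your reduction is exactly the paper's. Lemma~\ref{lem:blocks}(c), the bound $C_m\le u+1$ and Lemma~\ref{lem:window}(a) bring everything down to $\operatorname{osc}E+(1-\beta)(u+1)<s-1$. That inequality is the whole content of the lemma, and the proposal does not prove it. Worse, the heuristics you give for it are wrong. Take $t=s$, which lies in the middle range for $s\ge 74$. Then $\beta=\tfrac12-\tfrac{1}{8s}+O(s^{-2})$, so for $|j|\le s$ the values $\{j\beta\}$ lie within about $\tfrac18$ of $\tfrac12$ for odd $j$, of $0$ for even $j<0$, and of $1$ for even $j>0$. Summing over the two windows gives $E(s)-E(0)=\tfrac{3s}{8}+O(1)$. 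So $E$ does not have bounded oscillation near $\beta=\tfrac12$, and the shape $C_1\min(1/\beta,s\beta)+C_2$ is false. There is also no complementarity: at $t\approx s$ both $\operatorname{osc}E$ and $(1-\beta)(u+1)\approx s/4$ grow linearly in $s$. What has to be proved is that their combined slope stays uniformly below $1$, which needs explicit constants. Your fallback $\operatorname{osc}E\le s/2+O(1)$ is only asserted.

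The missing idea is that no cancellation between blocks is needed, so Ostrowski or sign-pattern arguments are beside the point. Crude summation of block deviations suffices, provided it is done at the right scale with sharp constants.

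First pass to $\gamma=\min(\beta,1-\beta)$ via $\{j\beta\}=1-\{j\gamma\}$ for $j\neq0$, at a cost of at most $1$ in $\operatorname{osc}E$. Without this, for $\beta>\tfrac12$ your blocks have length $1$ or $2$ and there are about $s\beta$ of them, so bounding them one at a time is too lossy.

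Then split each window $[A-s,A-1]$ into the blocks $\{j:\lfloor j\gamma\rfloor=m\}$ (Lemma~\ref{lem:blockdev}):
\begin{itemize}
\item there are at most $s/\lfloor 1/\gamma\rfloor$ full blocks, each with $\bigl|\sum(\{j\gamma\}-\tfrac12)\bigr|\le K(\gamma)$, where $K(\gamma)=1-\gamma$ for $\gamma\le\tfrac13$ and $K(\gamma)=(1+\gamma)^2/(8\gamma)$ for $\tfrac13\le\gamma\le\tfrac12$;
\item there are two partial blocks, each bounded by $(1+\gamma)^2/(8\gamma)$.
\end{itemize}

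Combine this with $(1-\beta)(u+1)\le s\gamma(1-\gamma)+\tfrac98$. The target then follows from $s\,\Theta(\gamma)+(1+\gamma)^2/(2\gamma)+\tfrac{25}{8}<s$, where $\Theta(\gamma)=2K(\gamma)/\lfloor 1/\gamma\rfloor+\gamma(1-\gamma)<\tfrac89$. The $1/\gamma$ term is controlled by $\gamma>3/(2s+2)$, and this is where $s\ge 74$ comes from.

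If you keep your increment walk in $A$ instead, you need the same two ingredients. These are the reflection to $\gamma$, and explicit per-block and end-block constants whose slope, added to $\gamma(1-\gamma)$, stays below $1$.
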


\begin{proof}
As for Lemma~\ref{lem:boundary}, the bound follows
from~\eqref{eq:boundary_target}. The block estimate of
Lemma~\ref{lem:blockdev} bounds $\operatorname{osc}E$ through
the rotation of angle $\gamma=\min(\beta,1-\beta)$, and the
resulting inequality holds for $s\ge 74$. The computation,
with the explicit constants, is given in
Appendix~\ref{app:analytic}.
Proposition~\ref{prop:oscillation} establishes the equation.
\end{proof}

\subsection{Completion of the proof}
\label{ssec:completion}

We now assemble the oscillation bound. The block estimates of
Section~\ref{ssec:blocks} settle all but finitely many
orders, and the following exact computation covers the rest.

\begin{proposition}\label{prop:finite}
For every non-square $r$ with $3\le r\le 5473$,
\[
  \max_{0\le j\le r-1}P(j)-\min_{0\le j\le r-1}P(j)<\sqrt{r}-1.
\]
\end{proposition}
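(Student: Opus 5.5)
This is a finite verification, and I would carry it out as an exact computation in $\mathbb{Z}[\sqrt{r}]$ for each non-square $r$ with $3\le r\le 5473$. By Lemma~\ref{lem:walk}(a), $P(j)=j\beta-C(j)$, where $C(j)$ counts the indices $1\le k\le j$ with $\frc{kc}<\beta$. The walk is therefore determined by a finite binary word of length $r-1$, and each value $P(j)$ has the exact form $j\alpha-js-C(j)$, an element of $\mathbb{Z}[\sqrt{r}]$.

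The plan runs in three steps.

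\textbf{Step 1: compute each step indicator with integer arithmetic.} The step indicator is given exactly by Lemma~\ref{lem:blocks}(a): $\frc{kc}<\beta$ iff $k<m\alpha+\alpha\beta$ with $m=\fl{kc}$. Since $\alpha\beta=t-s\beta=\alpha s - r + \ldots$, it is cleaner to note that $\frc{kc}<\beta$ is equivalent to $k-m\alpha<r-s\alpha$, that is, $k-r<(m-s)\alpha$. This is decided by comparing signs and squares of integers. Likewise $m=\fl{k/\sqrt r}$ is the largest $m$ with $m^2r\le k^2$, which is integer arithmetic. So the word, hence $C(j)$, is computed without any floating point.

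\textbf{Step 2: reduce the maximum and minimum to a few candidates.} Rather than scanning all $j$, I use Lemma~\ref{lem:blocks}(c). The maximum of $P$ is the maximum of $H(M)$ over block boundaries, and the minimum is taken over the ends of the initial runs. Each candidate is $x+y\alpha$ with integers $x,y$. To find the extremes exactly, I compare two candidates $x_1+y_1\alpha$ and $x_2+y_2\alpha$ by the sign of $(x_1-x_2)+(y_1-y_2)\alpha$. That sign is determined by comparing $(x_1-x_2)^2$ with $(y_1-y_2)^2r$ together with the signs of the two differences.

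\textbf{Step 3: check the target inequality exactly.} Once the extremal indices $j_+$ and $j_-$ are found, the difference $P(j_+)-P(j_-)$ is some $X+Y\alpha$. The required inequality $X+Y\alpha<\alpha-1$ becomes the exact sign test $(X+1)+(Y-1)\alpha<0$, again decided by squaring. The cost per $r$ is $O(r)$ with integers of size $O(r^2)$, so the whole range is a trivial computation.

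\textbf{Main obstacle: certifying the computation.} The hard part is making the check rigorous rather than numerical. A floating-point evaluation would be risky near $r=3$, although the margin there is $0.196$, and the margins for larger $r$ are at least not obviously bounded away from zero. My first safeguard is to perform every comparison in exact integer arithmetic, as in Steps 1–3, and to report the minimal margin attained.

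As an independent cross-check, I would use the second-scale identity of Lemma~\ref{lem:window}(a) together with the boundary families of Lemma~\ref{lem:boundary}. Agreement there confirms that the code computes the correct walk. I would also confirm the sufficient condition~\eqref{eq:boundary_target} in overlapping ranges.

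Finally, the cutoff $5473$ should be checked against the bounds it is meant to complement. It should cover all $s\le 73$ in the middle range, since $74^2-1=5475$ and $5473$ is the last non-square below $74^2$. It should also cover the exceptional case $r=10$ and the orders with $s\le 2$ that are excluded from Lemma~\ref{lem:boundary}. Together with Lemmas~\ref{lem:boundary} and~\ref{lem:middle}, this gives complete coverage.
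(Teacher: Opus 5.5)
Your plan is correct and is essentially the paper's proof (Appendix~\ref{app:finite}). Both use the same integer test $(r-k)^{2}>r(s-m_{k})^{2}$ for the step indicator, exact sign comparisons in $\mathbb{Z}[\sqrt{r}]$, and one final comparison against $\sqrt{r}-1$. The paper differs only in scanning all $j$ rather than the candidates of Lemma~\ref{lem:blocks}(c), and in reporting the outcome: a minimal margin of $3\sqrt{3}-5$, attained only at $r=3$.

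One small slip in your coverage remark: $5474$ and $5475$ are also non-squares below $74^{2}$. So $5473=73^{2}+2\cdot 73-2$ is the last middle-range order with $s=73$, and the other two orders fall into the boundary families $t\in\{2s-1,2s\}$ handled by Lemma~\ref{lem:boundary}.
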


\begin{proof}
Each $P(j)$ is an element of $\mathbb{Z}[\sqrt{r}]$, so the
inequality is decided by a finite sequence of exact integer
comparisons. The computation, its minimal margin, and the
program are described in Appendix~\ref{app:finite}.
\end{proof}

\begin{proof}[Proof of Theorem~\ref{thm:universal},
non-square case]
For $r=2$, one has $\operatorname{osc}P=\alpha-1$, with
$\sup\Phi=(r{+}\alpha)/2$ and $\inf\Phi=(r{-}\alpha)/2$. By the
jump-avoidance argument in the proof of
Proposition~\ref{prop:oscillation}, these extremal values are
missed by the canonical orbit, so the equation holds. For $3\le r\le 8$,
Table~\ref{tab:verification} exhibits $\sup\Phi$ and $\inf\Phi$
explicitly. In general, the finite exact computation of
Proposition~\ref{prop:finite} covers every non-square $r$ with
$3\le r\le 5473$, Lemma~\ref{lem:boundary} settles the boundary
families $t\in\{1,2,2s{-}1,2s\}$ for every $s\ge 3$, and
Lemma~\ref{lem:middle} the middle range $3\le t\le 2s-2$ for
every $s\ge 74$. These cover every non-square $r>5473$. Either
$\fl{\sqrt{r}}=73$, so $r\in\{5474,5475\}$ with
$t\in\{2s{-}1,2s\}$ and Lemma~\ref{lem:boundary} applies, or
$\fl{\sqrt{r}}\ge 74$, where Lemmas~\ref{lem:boundary}
and~\ref{lem:middle} cover the boundary and middle ranges.
Together with Proposition~\ref{prop:finite} for
$3\le r\le 5473$, this accounts for all non-square orders. In
each case $\max P-\min P<\alpha-1$, and the equation follows
from Proposition~\ref{prop:oscillation}.
\end{proof}

\subsection{Perfect squares}
\label{ssec:squares}

When $r=s^{2}$, the parameter $\beta$ vanishes
and the counting argument of the previous
subsection degenerates.
The window sum can instead be evaluated exactly by grouping
terms modulo~$s$ and applying Hermite's identity.

\begin{theorem}\label{thm:square}
Let $r=s^{2}$ and $a(n)=\fl{n/s+s/2}$.
For all $n\ge s^{2}$:
\begin{enumerate}
\item if $s$ is odd, then
  $S(n)=sn-s(s{-}1)/2$
  and $a(S(n))=n$,
\item if $s$ is even, then
  $S(n)=sn-s(s{-}2)/2$
  and $a(S(n))=n+1$.
\end{enumerate}
\end{theorem}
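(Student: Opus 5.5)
The plan is a direct exact evaluation of $S(n)$; no equidistribution is needed because $c=1/s$ is rational. Write the window as the $s^{2}$ consecutive integers $m=n-s^{2}+1,\ldots,n$. The hypothesis $n\ge s^{2}$ guarantees $m\ge 1$ throughout, so every term is $a(m)=\fl{m/s+s/2}$ and no padding zero enters. Split the window into $s$ consecutive blocks of length $s$, with starting points $m_{b}=n-s^{2}+1+bs$ for $0\le b\le s-1$.

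On block $b$ apply Hermite's identity $\sum_{i=0}^{s-1}\fl{x+i/s}=\fl{sx}$ with $x=m_{b}/s+s/2$. This gives
\[
  \sum_{i=0}^{s-1}a(m_{b}+i)=\Bigl\lfloor m_{b}+\frac{s^{2}}{2}\Bigr\rfloor .
\]
The parity of $s$ enters only here. If $s$ is odd, $s^{2}/2$ is a half-integer and the block sum is $m_{b}+(s^{2}-1)/2$. If $s$ is even, the block sum is exactly $m_{b}+s^{2}/2$.

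Next sum over $b$. Using $\sum_{b}m_{b}=s(n-s^{2}+1)+s^{2}(s-1)/2$, the odd case gives
\[
  S(n)=sn-s^{3}+s+\tfrac{s^{2}(s-1)}{2}+\tfrac{s(s^{2}-1)}{2}=sn-\tfrac{s(s-1)}{2}.
\]
The even case gives
\[
  S(n)=sn-s^{3}+s+\tfrac{s^{2}(s-1)}{2}+\tfrac{s^{3}}{2}=sn-\tfrac{s(s-2)}{2}.
\]
These are the two formulas claimed in the statement.

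Finally substitute into $a$. For $s$ odd,
\[
  a(S(n))=\Bigl\lfloor n-\tfrac{s-1}{2}+\tfrac{s}{2}\Bigr\rfloor=\fl{n+\tfrac12}=n.
\]
For $s$ even,
\[
  a(S(n))=\Bigl\lfloor n-\tfrac{s-2}{2}+\tfrac{s}{2}\Bigr\rfloor=n+1.
\]
There is no real obstacle in this argument. The only points needing care are the bookkeeping of the block offsets $\sum_{b}bs=s^{2}(s-1)/2$ and the check that $n\ge s^{2}$ keeps the whole window inside the positive indices, so that Hermite's identity applies to full blocks.
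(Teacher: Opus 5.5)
Your proof is correct and is essentially the paper's argument: both split the $s^{2}$-term window into an $s\times s$ grid and evaluate it with Hermite's identity. The difference is only bookkeeping. The paper writes $j=ks+u$, pulls the integer shift $k$ out of the floor, and applies Hermite once over the residues $u$, after first rewriting $a(n)=q+\fl{(n+q)/s}$ for odd $s$. You instead apply Hermite to each block of $s$ consecutive indices with $x=m_{b}/s+s/2$, so the parity of $s$ enters only through $\fl{m_{b}+s^{2}/2}$.
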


\begin{proof}
Write $q=\fl{s/2}$.

Suppose first that $s=2q{+}1$ is odd.
Then $d=s/2=q+1/2\notin\mathbb{Z}$, and
$a(n)=q+\fl{(n{+}q)/s}$.
For $n\ge s^{2}$ the full window is active, so
\[
  S(n)
  =s^{2}q+\sum_{j=0}^{s^{2}-1}
  \Bigl\lfloor\frac{n-j+q}{s}\Bigr\rfloor.
\]
Write $j=ks+u$ with $0\le k,u\le s{-}1$.
Then $\fl{(n{-}ks{-}u{+}q)/s}
=\fl{(n{-}u{+}q)/s}-k$, giving
\[
  S(n)
  =s^{2}q
  +s\sum_{u=0}^{s-1}\Bigl\lfloor\frac{n{-}u{+}q}{s}
  \Bigr\rfloor
  -s\cdot\frac{s(s{-}1)}{2}.
\]
As $u$ ranges over $\{0,\ldots,s{-}1\}$,
the values $n{+}q{-}u$ hit each residue class
modulo~$s$ exactly once, so by Hermite's identity
($\sum_{v=0}^{n-1}\fl{x+v/n}=\fl{nx}$ for all
$x\in\mathbb{R}$)
\[
  \sum_{u=0}^{s-1}\Bigl\lfloor\frac{n{+}q{-}u}{s}
  \Bigr\rfloor=n+q-s+1.
\]
Substituting and using $q=(s{-}1)/2$ yields
$S(n)=sn-s(s{-}1)/2$.
Finally,
$a(S(n))
=q+\fl{n{-}q+q/s}=q+(n{-}q)=n$,
since $0\le q/s=(s{-}1)/(2s)<1$.

Suppose now that $s=2m$ is even.
Then $d=m\in\mathbb{Z}$ and
$a(n)=m+\fl{n/s}$.
Applying Hermite's identity in the same way gives
$S(n)=sn-s(s{-}2)/2$.
Then $a(S(n))
=m+\fl{n{-}(s{-}2)/2}=n+1$,
since $(s{-}2)/2=m{-}1\in\mathbb{Z}$.
\end{proof}

\begin{corollary}
For the canonical Beatty solution, the equation $a(S(n))=n$
holds for all odd perfect squares $r=s^{2}$
($s=3,5,7,\ldots$) and fails systematically
for all even perfect squares $r=(2m)^{2}$.
\end{corollary}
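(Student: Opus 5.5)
This corollary is an immediate specialization of Theorem~\ref{thm:square} to the canonical shift $d=\sqrt{r}/2=s/2$, so the plan is simply to read off its two cases. First observe that for $r=s^{2}$ the canonical sequence~\eqref{eq:general_beatty} is exactly $a(n)=\fl{n/s+s/2}$, the sequence treated in Theorem~\ref{thm:square}, and that the range $n\ge r$ of Theorem~\ref{thm:universal} coincides with the range $n\ge s^{2}$ there.

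For odd $s$, part~(1) of Theorem~\ref{thm:square} gives $a(S(n))=n$ for every $n\ge s^{2}$, which is the asserted validity for $r=9,25,49,\dots$. For even $s=2m$, part~(2) gives $a(S(n))=n+1\neq n$ for \emph{every} $n\ge s^{2}$. So the failure is systematic in the strong sense: the equation is violated at every index of the full-window range, not merely infinitely often. I would state this explicitly, since it contrasts with Proposition~\ref{prop:oscillation}, where failures occur only on the return set of an open interval.

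There is essentially no obstacle here. The only point needing care is the exact form of $a$ in each parity. For odd $s$, the shift $s/2$ is a half-integer and $a(n)=q+\fl{(n+q)/s}$. For even $s$, the shift is the integer $m$. In both cases this was already handled in the proof of Theorem~\ref{thm:square}, so the corollary follows without further computation. If one also wants to mention $1\le n<r$, no claim is made there, matching the convention of Theorem~\ref{thm:universal}.
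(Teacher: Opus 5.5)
Your proposal is correct and matches the paper: the corollary is stated immediately after Theorem~\ref{thm:square} as a direct reading of its two parity cases at the canonical shift $d=s/2$, with no additional argument. Your remark that for even $s$ the equation fails at every $n\ge s^{2}$, because $a(S(n))=n+1$ there, is exactly what ``fails systematically'' means in the statement.
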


\subsection{The interval of shifts}

By Theorem~\ref{thm:slope} (with $k=1$), a Beatty solution of
the almost Golomb equation with $c>0$ has slope
$c=1/\!\sqrt{r}$. This section classifies the admissible
shifts, proving Theorem~\ref{thm:shift_interval}. The reduction
of Section~\ref{ssec:discretization} extends to an arbitrary
shift.

\begin{proposition}\label{prop:general_shift}
Let $\alpha=\sqrt{r}$, $c=1/\alpha$, and $a(n)=\fl{cn+d}$.
For $n\ge r$,
\[
  a(S(n))=n
  \iff
  \Phi(\theta_{n})\in\bigl(\alpha(D-1),\,\alpha D\bigr],
  \qquad D=d(\alpha+1)-\tfrac{r-1}{2},
\]
where $\theta_{n}=\frc{cn+d}$ and
$\Phi(\theta)=\sum_{j=0}^{r-1}\frc{\theta-jc}$.
\end{proposition}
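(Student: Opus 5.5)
We follow the proof of Proposition~\ref{prop:reduction}, now keeping the shift $d$ as a free parameter. Write $x_{j}=c(n{-}j)+d$ for $0\le j\le r-1$, so that $a(n{-}j)=x_{j}-\frc{x_{j}}$. Since $n\ge r$, every index $n-j$ is at least $1$, the full window is active and no padding zeros enter. Next we check that $\frc{x_{j}}=\frc{\theta_{n}-jc}$. This holds because $x_{j}=(cn+d)-jc$, and the fractional part of $y-jc$ depends only on $\frc{y}$. Summing over the window gives
\[
  S(n)=\sum_{j=0}^{r-1}x_{j}-\Phi(\theta_{n})
  =r(cn+d)-\frac{cr(r{-}1)}{2}-\Phi(\theta_{n}).
\]
Using $rc=\alpha$ and $cr(r-1)/2=\alpha(r-1)/2$, this becomes $S(n)=\alpha n+r d-\alpha(r-1)/2-\Phi(\theta_{n})$.

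We then apply $a$. Multiplying by $c=1/\alpha$ gives
\[
  cS(n)+d=n+\frac{rd}{\alpha}+d-\frac{r-1}{2}-\frac{\Phi(\theta_{n})}{\alpha}
  =n+D-\frac{\Phi(\theta_{n})}{\alpha},
\]
because $rd/\alpha=\alpha d$. This yields exactly $d(\alpha+1)-(r-1)/2=D$, and it agrees with the affine computation in Theorem~\ref{thm:continuous}. Since $n$ is an integer, $a(S(n))=\fl{n+D-\Phi(\theta_{n})/\alpha}=n$ holds if and only if $0\le D-\Phi(\theta_{n})/\alpha<1$. Rearranging, this is $\alpha(D-1)<\Phi(\theta_{n})\le\alpha D$, which is the stated criterion.

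As a consistency check, take $d=\alpha/2$. Then $D=(r+\alpha)/2-(r-1)/2=(\alpha+1)/2$, so $\alpha D=(r+\alpha)/2$ and $\alpha(D-1)=(r-\alpha)/2$. This recovers Proposition~\ref{prop:reduction}, since in that case $\theta_{n}-jc$ is the argument used there.

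There is no real obstacle. The only points needing care are the identification $\frc{x_{j}}=\frc{\theta_{n}-jc}$ (integer parts drop out) and the sign conventions in the final rearrangement. Note also that the constraint that $S(n)$ is a positive index is automatic for $n\ge r$. The argument does not use the irrationality of $c$, which only becomes relevant afterwards, when equidistribution of $\theta_{n}$ is invoked.
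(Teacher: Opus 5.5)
Your proof is correct and is essentially the paper's own argument: expand $a(n-j)=c(n-j)+d-\frc{\theta_n-jc}$, sum over the window using $rc=\alpha$, and obtain $a(S(n))=n+\fl{D-\Phi(\theta_n)/\alpha}$, which rearranges to the stated interval. One quibble concerns your aside that $S(n)\ge 1$ is automatic for $n\ge r$: it needs the paper's standing positivity convention $d\ge 1-1/\!\sqrt{r}$ and fails for very negative $d$, but this is harmless because the statement evaluates $a$ at $S(n)$ by the floor formula anyway.
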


\begin{proof}
Writing $a(n)=cn+d-\theta_{n}$ and summing over the window,
$cr=\alpha$ gives
$S(n)=\alpha n-\alpha(r{-}1)/2+rd-\Phi(\theta_{n})$.
Applying~$a$ and using $c\alpha=1$ and $crd=\alpha d$,
\[
  a(S(n))=n+\bigl\lfloor D-c\,\Phi(\theta_{n})\bigr\rfloor,
\]
which equals~$n$ iff $0\le D-c\Phi(\theta_{n})<1$, that is,
$\Phi(\theta_{n})\in(\alpha(D-1),\alpha D]$.
\end{proof}

The sawtooth $\Phi$ does not depend on~$d$, and
$(\theta_{n})_{n\ge r}$ is equidistributed in $[0,1)$ by
Weyl's theorem. Since $\inf\Phi\le\Phi(\theta_{n})\le\sup\Phi$
for every~$n$, the equation holds for all $n\ge r$ once
$\sup\Phi\le\alpha D$ and $\inf\Phi>\alpha(D-1)$. The next
lemma shows that, for non-square $r\ge 3$, the admissible
values of~$D$ in fact form a \emph{closed} interval, both
boundary values being admissible.

\begin{lemma}\label{lem:endpoint}
Let $r\ge 3$ be non-square, $\alpha=\sqrt{r}$, $c=1/\alpha$.
The sequence $a(n)=\fl{cn+d}$ satisfies $a(S(n))=n$ for all
$n\ge r$ if and only if $D=d(\alpha+1)-(r{-}1)/2$ lies in the
closed interval
$\bigl[\sup\Phi/\alpha,\ \inf\Phi/\alpha+1\bigr]$. In
particular both endpoints are admissible.
\end{lemma}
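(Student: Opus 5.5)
By Proposition~\ref{prop:general_shift}, the equation $a(S(n))=n$ holds at a given $n\ge r$ if and only if $\alpha(D-1)<\Phi(\theta_{n})\le\alpha D$, where $\theta_{n}=\frc{cn+d}$. The plan is to compare the orbit values $\Phi(\theta_{n})$ with $\sup\Phi$ and $\inf\Phi$. The sufficiency direction is the delicate one, because it needs information about how the orbit meets the jump points of $\Phi$.

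\emph{Necessity.} Since $c$ is irrational, $(\theta_{n})_{n\ge r}$ is dense in $[0,1)$ by Weyl's theorem. Suppose $\alpha D<\sup\Phi$. The supremum is approached from the left of some jump point, so $\Phi>\alpha D$ on a nonempty open interval, and the orbit enters that interval. Now suppose $\alpha(D-1)>\inf\Phi$. The infimum is the post-jump value $M(j_{0})-1$ at some jump point, and since $\Phi$ is right-continuous with slope $r$, we get $\Phi<\alpha(D-1)$ on a right neighbourhood of that jump. The orbit again enters it. In either case the equation fails, so admissibility forces $\sup\Phi/\alpha\le D\le\inf\Phi/\alpha+1$.

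\emph{Sufficiency.} Let $D$ lie in the closed interval. The upper condition $\Phi(\theta_{n})\le\alpha D$ holds for every $n$, because $\Phi\le\sup\Phi\le\alpha D$ everywhere; in fact $\sup\Phi$ is not attained, since it is only a left limit at a jump. The lower condition needs $\Phi(\theta_{n})>\alpha(D-1)$, and we know $\alpha(D-1)\le\inf\Phi\le\Phi(\theta_{n})$. It can fail only if $D$ is the right endpoint and $\Phi(\theta_{n})=\inf\Phi$. That equality can only happen when $\theta_{n}$ is exactly a jump point $\frc{-jc}$, that is, a point $\frc{jc}$ in the paper's indexing, with $0\le j\le r-1$, at which the infimum is realised. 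So the crux is to exclude $\frc{cn+d}=\frc{-jc}$ for $n\ge r$.

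\emph{The main obstacle.} For a general shift $d$, unlike the canonical case $d=\alpha/2$, the orbit may in principle hit a jump point. The equation $\frc{cn+d}=\frc{-jc}$ means $d\equiv -(n+j)c\pmod 1$. I would use the explicit value of $d$ at the right endpoint, namely $d=\bigl(\inf\Phi/\alpha+1+(r-1)/2\bigr)/(\alpha+1)$, and express $\inf\Phi=r-1-\Sigma+\min P$ as an element of $\mathbb{Z}[\alpha]$ via Lemma~\ref{lem:walk}. Rationalising with $1/(\alpha+1)=(\alpha-1)/(r-1)$ then puts $d$ in $\mathbb{Q}(\alpha)$ in explicit form, and I would check whether some integer $m=n+j\ge r$ satisfies $d+mc\in\mathbb{Z}$. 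This is a linear condition on the rational and irrational parts of $d$, so it holds for at most one $m$, and comparing coefficients of $\alpha$ should show that this $m$ is not positive, or that it does not correspond to a minimizing jump $j_{0}$.

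If that route becomes messy, the fallback is a perturbation argument. Near the minimizing jump, the value of $\Phi$ just after the jump equals $\inf\Phi$ exactly, so a hit can only occur at one isolated index. I would then verify directly, through the structure of $\inf\Phi$ (the minimum of $P$ is attained inside a block by Lemma~\ref{lem:blocks}(c)), that the relevant index $m$ would have to be negative or smaller than $r$. Either way, this jump-avoidance at the right endpoint is the step I expect to cost the most effort. Once it is settled, both endpoints are admissible, which proves the closed-interval claim.
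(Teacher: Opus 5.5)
Your necessity argument is correct, and so is your reduction of sufficiency. For $D$ in the closed interval the upper condition is automatic, and the lower condition $\Phi(\theta_n)>\alpha(D-1)$ can fail only when $D=D_{+}=\inf\Phi/\alpha+1$ and $\theta_n$ is the jump point where $\Phi$ attains its minimum. This handles the left endpoint without the strict bound $\Omega_r<\alpha$ that the paper uses there. You still need $\Omega_r\le\alpha$ (from the proof of Theorem~\ref{thm:universal}) for the interval to be nonempty, so that the endpoints exist at all.

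\textbf{The gap.} The step you call the crux, excluding a hit at $D_{+}$, is announced but not carried out, and the outcome you expect is not the one that holds.
\begin{itemize}
\item \emph{Wrong jump points.} The jumps of $\Phi(\theta)=\sum_{j=0}^{r-1}\frc{\theta-jc}$ sit at $\frc{jc}$, not at $\frc{-jc}$. So a hit means $d+(n-j)c\in\mathbb{Z}$. Let $m$ be the unique solution of $d+mc\in\mathbb{Z}$; the index you must control is $n=m+j$. Bounding your $m=n+j$ would not suffice.
\item \emph{Wrong predicted conclusion.} The candidate is in general positive. For $r=s^{2}+1$ the minimum is at $j=0$, and the candidate is $\bigl(\sum_{i=1}^{r-1}\fl{ic}-1\bigr)/(r-1)$. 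This equals $1/4$ for $r=5$ and $23/16$ for $r=17$. So ``this $m$ is not positive'' is not what comparing coefficients yields.
\end{itemize}

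\textbf{What closes it.} You need the explicit computation plus one crude estimate.
\begin{itemize}
\item Let $j$ be the minimizing jump index. Then
$\inf\Phi=\Phi(\frc{jc})=\sum_{i=j-r+1}^{j}\frc{ic}=\tfrac{\alpha}{2}(2j-r+1)-K_j$, where $K_j=\sum_{i=j-r+1}^{j}\fl{ic}$. This gives $d_{+}=(j+1-K_jc)/(\alpha+1)$.
\item Multiply $cn+d_{+}-jc\in\mathbb{Z}$ by $\alpha(\alpha+1)$ and match the rational part and the $\alpha$-part, which is legitimate because $r$ is not a square. This forces $n=-(K_j+r+j)/(r-1)$.
\item Every $i$ in the window has $|i|\le r-1$, so $|ic|<\alpha<s+1$ and $\fl{ic}\ge-(s+1)$. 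Hence $K_j\ge-r(s+1)$.
\item Therefore $n\le(rs-j)/(r-1)\le rs/(r-1)<r$, since $s<r-1$ for $r\ge 3$.
\end{itemize}
It is this upper bound $n<r$, not a sign condition, that rules out every hit with $n\ge r$ and makes $D_{+}$ admissible. Your fallback names the right target (``smaller than $r$'') but gives no argument for it.
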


\begin{proof}
By Proposition~\ref{prop:general_shift} the equation for a
given~$n$ reads $\Phi(\theta_{n})\in(\alpha(D-1),\alpha D]$.
As $\inf\Phi\le\Phi(\theta_{n})\le\sup\Phi$ and the orbit
$(\theta_{n})_{n\ge r}$ is dense, the equation can hold for
all $n\ge r$ only if $\sup\Phi\le\alpha D$ and
$\inf\Phi\ge\alpha(D-1)$, that is
$D\in[\sup\Phi/\alpha,\ \inf\Phi/\alpha+1]$. For interior~$D$
the strict inequalities $\sup\Phi\le\alpha D$ and
$\inf\Phi>\alpha(D-1)$ conversely give it. We treat
the two endpoints.

\emph{Left endpoint} $D_{-}=\sup\Phi/\alpha$. The upper bound
$\Phi(\theta_{n})\le\alpha D_{-}=\sup\Phi$ holds for
every~$\theta$, and the lower bound
$\Phi(\theta_{n})>\alpha(D_{-}{-}1)=\sup\Phi-\alpha$ holds
because $\inf\Phi>\sup\Phi-\alpha$. Using
$\inf\Phi+\sup\Phi=r$ (Lemma~\ref{lem:palindrome}) this reads
$\sup\Phi<(r+\alpha)/2$, valid for non-square $r\ge 3$ by the
proof of Theorem~\ref{thm:universal}. So $D_{-}$ is
admissible.

\emph{Right endpoint} $D_{+}=\inf\Phi/\alpha+1$. Only the
lower bound is in question:
$\Phi(\theta_{n})>\alpha(D_{+}{-}1)=\inf\Phi$ fails exactly
when $\theta_{n}$ lands on the finite set where $\Phi$ attains
its infimum. The sawtooth is piecewise linear with slope~$r$
and drops by~$1$ at each point $\frc{jc}$, $0\le j\le r-1$, so
its minimum is the least post-jump value
\[
  \Phi\!\bigl(\frc{jc}\bigr)=\sum_{i=j-r+1}^{j}\frc{ic}
  =\tfrac{\alpha}{2}(2j-r+1)-K_{j},
  \qquad K_{j}=\sum_{i=j-r+1}^{j}\fl{ic},
\]
reached at an index~$j$ with $\Phi(\frc{jc})=\inf\Phi$. For
this~$j$ one has $d_{+}=(j+1-K_{j}c)/(\alpha+1)$. If
$\theta_{n}=\frc{jc}$ for some~$n$, then
$cn+d_{+}-jc\in\mathbb{Z}$. Multiplying by $\alpha(\alpha+1)$
and using $\alpha c=1$ and $\alpha^{2}=r$, the rational part
and the part linear in~$\alpha$ (independent over~$\mathbb{Q}$)
match separately, forcing
\[
  n=\frac{-K_{j}-r-j}{\,r-1\,}.
\]
Since $|i|\le r-1$ gives $|ic|\le(r-1)/\alpha<\alpha<s+1$ with
$s=\fl{\sqrt{r}}$, every $\fl{ic}\ge-(s+1)$, hence
$K_{j}\ge-r(s+1)$ and
\[
  n\le\frac{r(s+1)-r-j}{r-1}\le\frac{rs}{r-1}<r,
\]
the last inequality because $s\le\sqrt{r}<r-1$ for $r\ge 3$.
Thus no $n\ge r$ reaches the infimum, the lower bound is
strict along the orbit, and $D_{+}$ is admissible.
\end{proof}

We now prove Theorem~\ref{thm:shift_interval}, in which
$\Omega_{r}=\sup\Phi-\inf\Phi$ and $\alpha=\sqrt{r}$.

\begin{proof}[Proof of Theorem~\ref{thm:shift_interval}]
The slope is forced to $c=1/\!\sqrt{r}$ by
Theorem~\ref{thm:slope} (with $k=1$). We prove the shift
characterization and the bound $\Omega_{r}\le\alpha$. For
non-square $r\ge 3$, Lemma~\ref{lem:endpoint} gives the
admissible values of~$D$ as the closed interval
$\sup\Phi/\alpha\le D\le\inf\Phi/\alpha+1$, nondegenerate if
and only if $\Omega_{r}<\alpha$. Since
$d=(D+(r{-}1)/2)/(\alpha+1)$ is affine and increasing in~$D$,
the admissible shifts form a closed interval. Its midpoint
corresponds to
$D=\tfrac12(\sup\Phi+\inf\Phi)/\alpha+\tfrac12$, and by
$\sup\Phi+\inf\Phi=r$ (Lemma~\ref{lem:palindrome}) this gives
\[
  d=\frac{\tfrac12(r/\alpha+1)+\tfrac{r-1}{2}}{\alpha+1}
   =\frac{r/\alpha+r}{2(\alpha+1)}=\frac{\sqrt{r}}{2}.
\]
The half-width is
$\tfrac12\bigl(1-\Omega_{r}/\alpha\bigr)/(\alpha+1)=\rho_{r}$,
so $J_{r}=[\sqrt{r}/2-\rho_{r},\ \sqrt{r}/2+\rho_{r}]$. By the
proof of Theorem~\ref{thm:universal},
$\sup\Phi<(r+\alpha)/2$ for $r\ge 3$, hence
$\Omega_{r}=2\sup\Phi-r<\alpha$ and $\rho_{r}>0$. For $r=2$ one
has $\sup\Phi=(r+\alpha)/2$ and $\Omega_{2}=\sqrt{2}=\alpha$,
so $\rho_{2}=0$ and $J_{2}=\{\sqrt{2}/2\}$, admissible by
Theorem~\ref{thm:canonical}.
\end{proof}

\begin{corollary}\label{cor:uniqueness}
For non-square~$r$, the full-window almost Golomb equation
has a unique Beatty solution of positive slope if and only if
$r=2$. For every non-square $r\ge 3$ it has a nondegenerate
one-parameter family of Beatty solutions of positive slope.
\end{corollary}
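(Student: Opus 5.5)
This is a direct consequence of Theorem~\ref{thm:shift_interval}, so the plan is to read off the corollary from that theorem. Only the uniqueness part of Theorem~\ref{thm:shift_interval} is used, together with the nondegeneracy statement.

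First, fix a non-square $r$ and let $a(n)=\fl{cn+d}$ be a Beatty solution with $c>0$ of the full-window equation. By Theorem~\ref{thm:shift_interval} (whose slope part is Theorem~\ref{thm:slope} with $k=1$), we must have $c=1/\!\sqrt{r}$ and $d\in J_{r}$. Conversely, every $d\in J_{r}$ gives a solution. So the set of Beatty solutions of positive slope is parametrized by $J_{r}$.

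Second, we check that distinct shifts in $J_{r}$ give distinct sequences, so that the parametrization is injective on sequences. Take $d<d'$ with the same irrational slope $c$. The orbit $\frc{cn+d}$, $n\ge r$, is dense in $[0,1)$, so some $n$ satisfies $\frc{cn+d}\in[1-(d'-d),1)$ whenever $d'-d<1$. For that $n$, $\fl{cn+d'}=\fl{cn+d}+1$. Since $J_{r}$ has length $2\rho_{r}<1$ (indeed $\rho_{r}\le 1/(2(\sqrt{r}+1))$), this covers all pairs in $J_{r}$. Thus the solutions are in bijection with $J_{r}$.

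Finally, the last clause of Theorem~\ref{thm:shift_interval} finishes the argument. For $r=2$, $\Omega_{2}=\sqrt{2}$ gives $\rho_{2}=0$, so $J_{2}=\{\sqrt2/2\}$ and the solution is unique; it exists by Theorem~\ref{thm:canonical}. For non-square $r\ge3$, $\Omega_{r}<\sqrt{r}$ gives $\rho_{r}>0$, so $J_{r}$ is a nondegenerate interval and we obtain a one-parameter family of distinct solutions. The only mildly delicate point is the injectivity step, which is exactly where density of the irrational rotation is used.
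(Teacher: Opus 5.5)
Your proposal is correct and follows the paper's approach: the corollary is stated as an immediate reading of Theorem~\ref{thm:shift_interval}, with $J_{2}=\{\sqrt{2}/2\}$ for $r=2$ and $\rho_{r}>0$ for non-square $r\ge 3$. Your density argument that distinct shifts in $J_{r}$ give distinct sequences proves a point the paper only asserts in the introduction, and that injectivity is what makes the non-uniqueness for $r\ge 3$ a statement about sequences and not merely about parameters.
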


\begin{remark}
For $r=3$ one has $\sup\Phi=4-\sqrt{3}$ and
$\inf\Phi=\sqrt{3}-1$, so
\[
  J_{3}=\Bigl[\,2-\tfrac{2}{\sqrt{3}},\ \tfrac{5\sqrt{3}-6}{3}\,\Bigr],
\]
centered at $\sqrt{3}/2$ with half-width
$\rho_{3}=(3\sqrt{3}-5)/(6+2\sqrt{3})$. Here $J_{r}$ is the set of shifts for which the equation holds,
whereas $I_{r}$ is the set for which the once-composed
equation~\eqref{eq:main} holds. This set is determined in
Section~\ref{sec:interval} for $r=2$ (the interval $I_{2}=I$
of Theorem~\ref{thm:interval}) and $r=3$ (the interval $I_{3}$
of Theorem~\ref{thm:interval_r3}). Whenever both are
defined, $J_{r}\subseteq I_{r}$, since the strong equation
implies the once-composed one. For $r=3$ the inclusion is
strict and the two intervals share the center~$\sqrt{3}/2$.
For $r=2$ the singleton $J_{2}=\{\sqrt{2}/2\}$ is the left
endpoint of $I_{2}$ (Theorem~\ref{thm:interval}). The relation
for higher orders remains open. In the
full range $n\ge 1$ the admissible shifts form a nonempty
subinterval of $J_{r}$ for $r\in\{3,5,6\}$, obtained by adding
to the defining conditions the finitely many truncated-window
equations with $1\le n<r$. For $r=3$ and $r=6$ these add no
constraint, and the subinterval is all of $J_{3}$, resp.\
$J_{6}$. For $r=5$ the equation at $n=2$ forces $a(2)=2$, that
is $d\ge 2-2/\!\sqrt{5}$, and the full-range admissible set is
the proper subinterval
\[
  \Bigl[\,2-\tfrac{2}{\sqrt{5}},\ \tfrac{25-\sqrt{5}}{20}\,\Bigr]
  \subsetneq J_{5},
\]
on which $(a(1),a(2),a(3),a(4))=(1,2,2,2)$ and the equation
holds for all $n\ge 1$. Thus the full-range equation has a
nondegenerate interval of Beatty solutions for each
$r\in\{3,5,6\}$, whereas the shift is unique for $r=2$.
\end{remark}

\subsection{Coexistence of solutions}

For each non-square~$r$ and each odd perfect square~$r$,
the almost Golomb equation~\eqref{eq:AG} of order~$r$ admits
at least two monotone solutions of different natures in the
range $n\ge r$. The greedy solution~\cite{Cloitre_AG} is
$r$-regular. The Beatty solution satisfies
$a(n)/n\to 1/\!\sqrt{r}$, and for $r\in\{2,3,5,6\}$ it
solves the equation in the full range $n\ge 1$.
Both satisfy the fixed-point relation $a\circ S=\mathrm{id}$
on the full-window range $n\ge r$, for different reasons. By
Corollary~\ref{cor:uniqueness} the Beatty branch is itself a
one-parameter family for every non-square $r\ge 3$. These are
solutions among many. Already for $r=2$, the full set of
monotone solutions has the cardinality of the
continuum~\cite{Cloitre_tree}.

\begin{remark}
The greedy branch is governed by finite-state
local structure and is therefore amenable to
$r$-regular methods~\cite{Cloitre_AG,AS}.
The Beatty branch is governed by irrational rotation
and the continuous functional
equation~\eqref{eq:continuous}.
\end{remark}

\section{The triple-nested equation}\label{sec:interval}

This section studies the once-composed equation~$(E_{2})$,
obtained by composing the almost Golomb
equation~\eqref{eq:AG} with~$a$. For $r=2$ it reads
$a(a(S(n)))=a(n)$ and reveals a continuous family of Beatty
solutions parametrized by a shift~$d$ ranging over an explicit
interval. We treat $r=2$ first, then $r=3$, and close in
Section~\ref{ssec:composed} with a defect criterion for all
the iterated equations~$(E_{k})$. The goal of the first part
is Theorem~\ref{thm:interval}. By Theorem~\ref{thm:slope} (with
$k=2$ and $r=2$) any Beatty solution of~\eqref{eq:main} with
$c>0$ has slope $c=1/\!\sqrt{2}$. The admissible shifts are
those for which the equation holds for all $n\ge 2$, namely
$d\in I=[\sqrt{2}/2,\,2(\sqrt{2}-1)]$.

The proof occupies Sections~\ref{ssec:continuous}%
--\ref{ssec:sufficiency}. Necessity is handled by
two failure lemmas that locate, in each of the three
ranges of $\theta_{n}=\frc{cn+d}$, a
sub-interval where the equation~\eqref{eq:main} fails
(Section~\ref{ssec:necessity}). Orbit density, via
Weyl equidistribution, then produces infinitely many
failures. Sufficiency is a case analysis of the same
ranges (Section~\ref{ssec:sufficiency}).
Section~\ref{sec:pell_ostrowski} then places the
interval~$I$ and its endpoints inside the
Pell--Ostrowski setting of Fokkink.
Section~\ref{sec:interval_r3} proves the
analogous result for $r=3$ by the same method.

\subsection{The continuous equation}
\label{ssec:continuous}

For the strong equation $\varphi(g(x))=x$
and the triple-nested $\varphi(\varphi(g(x)))=\varphi(x)$
(where $g(x)=\sum\varphi(x{-}j)$),
the continuous equation gives the same unique nondecreasing affine
solution~\eqref{eq:fsol} in both cases:
if $\varphi(g(x))=x$, composing with~$\varphi$
immediately yields $\varphi(\varphi(g(x)))=\varphi(x)$,
and conversely, injectivity of~$\varphi$ recovers
the strong equation from the triple-nested one.

In the discrete setting, the floor breaks this
equivalence. For $r=2$, the strong equation pins down the
unique shift $d=\sqrt{2}/2$ (the left endpoint of the Beatty
interval~$I$). This is special to $r=2$, since for
non-square $r\ge 3$ the strong equation already tolerates an
interval $J_{r}$ of shifts (Theorem~\ref{thm:shift_interval}).
The weaker triple-nested equation tolerates a wider range
of shifts. The floor in the outer composition
$a(\cdot)$ absorbs small errors, opening the
interval~$I$ to the right.

\subsection{Expansion}
Throughout this section, $c=1/\!\sqrt{2}$,
$d\in\mathbb{R}$, and $a(n)=\fl{cn+d}$.
Write $\theta_{n}=\frc{cn+d}\in[0,1)$, so that
$a(n)=cn+d-\theta_{n}$, and say that index~$n$
is a \emph{repeat} when $a(n{+}1)=a(n)$, and a
\emph{non-repeat} when $a(n{+}1)=a(n){+}1$.
Since $a(n{+}1)-a(n)=\fl{\theta_{n}+c}$, this amounts
to $\theta_{n}<1{-}c$ (repeat) or
$\theta_{n}\ge 1{-}c$ (non-repeat).
Set also $e(n)=-\theta_{n}\in(-1,0]$, so that
$a(n)=cn+d+e(n)$.
Here $S(n)=a(n)+a(n{-}1)$. Then
\begin{equation*}  S(n) = c(2n{-}1)+2d+e(n)+e(n{-}1).
\end{equation*}
Applying~$a$ to~$S(n)$ and using $2c^{2}=1$:
\begin{equation}\label{eq:aS}
  a(S(n)) = \frac{2n{-}1}{2} + d(2c{+}1)
  + c\bigl(e(n)+e(n{-}1)\bigr) + e(S(n)).
\end{equation}
A second application of~$a$ yields
$a(a(S(n)))= cn + R(n)$,
where
\begin{equation*}  R(n)=R_{0}+c^{2}(e(n)+e(n{-}1))
  +c\,e(S(n))+e(a(S(n))),
\end{equation*}
with constant part
\begin{equation*}  R_{0}
  =d(c+2)-\frac{c}{2}.
\end{equation*}
The equation~\eqref{eq:main} holds if and only if
$\fl{cn+R(n)}=\fl{cn+d}$ for all $n\ge 2$.

\subsection{Necessity}
\label{ssec:necessity}

The shift $d$ ranges over the real line, and the
interval~$I$ lies inside $[1-c,\,2-c)$, the set of shifts
with $a(1)=1$. The failure of~\eqref{eq:main} outside $I$ is
produced by a local argument on the three ranges of
$\theta_{n}\in[0,1)$ combined with density of the orbit
$(\theta_{n})_{n\ge 1}$. We describe the dynamics once and
read off each endpoint as a special case.

Expansion~\eqref{eq:aS} together with $2c^{2}=1$ gives
\begin{equation*}  a(S(n))=\fl{n-\tfrac{1}{2}+D-c\Theta_{2}(n)},
  \qquad D=d(2c+1),\quad \Theta_{2}(n)=\theta_{n}+\theta_{n-1}.
\end{equation*}
Writing $a(S(n))=n+m(n)$, the integer $m(n)$ is
determined by
\[
  m(n)=k \iff D-c\Theta_{2}(n)\in\Bigl[k+\tfrac{1}{2},\,k+\tfrac{3}{2}\Bigr).
\]
Two simple implications are immediate. If $m(n)\ge 1$,
then $a(S(n))\ge n+1$, so survival of the equation
$a(a(S(n)))=a(n)$ forces $a(n+1)=a(n)$, that is,
$n$ is a repeat. Symmetrically, if $m(n)\le -1$
then $n-1$ must be a repeat. These implications are
the two basic necessary conditions
\begin{itemize}
\item[(N$_{+}$)] $m(n)\ge 1\ \Rightarrow\ a(n+1)=a(n)$
($n$ is a repeat),
\item[(N$_{-}$)] $m(n)\le -1\ \Rightarrow\ a(n-1)=a(n)$
($n-1$ is a repeat).
\end{itemize}
Taken as a global statement over all $d$, (N$_{+}$) and
(N$_{-}$) are only necessary. If $|m(n)|\ge 2$, the equation
fails at~$n$ regardless of repeats. Indeed $c>1/2$ excludes
two consecutive repeats, so $a(n{+}2)\ge a(n)+1$ and
$a(n{-}2)\le a(n)-1$. If $m(n)\ge 2$ then
$a(a(S(n)))\ge a(n{+}2)>a(n)$, and if $m(n)\le -2$ then
$a(a(S(n)))\le a(n{-}2)<a(n)$. The bound $|m(n)|\le 1$ must
therefore be ensured a priori.
We show in Section~\ref{ssec:sufficiency} that this
bound holds uniformly on the interval~$I$, so that
on~$I$ the two conditions (N$_{+}$) and (N$_{-}$)
become jointly sufficient as well. The necessity
lemmas below exploit the two implications directly.

The three ranges of $\theta=\theta_{n}$ are
\[
  \mathrm{I}:\ \theta\in[0,1{-}c),\qquad
  \mathrm{II}:\ \theta\in[1{-}c,c),\qquad
  \mathrm{III}:\ \theta\in[c,1),
\]
which match the three branches of the decomposition
in Lemma~\ref{lem:absorption} below. In class~I,
$\theta_{n}<1-c$ means $n$ is a repeat. In classes~II
and~III, $n$ is a non-repeat, so (N$_{+}$) forbids
$m(n)\ge 1$ there. In class~II, furthermore,
$\theta_{n-1}=\theta-c+1\in[2-2c,1)\subset[1-c,1)$,
so $n-1$ is also a non-repeat, and (N$_{-}$) forbids
$m(n)\le -1$ on class~II.

Write $\Theta_{2}(n)$ explicitly in each class.
In class~III, $\theta_{n-1}=\theta-c\in[0,1-c)$,
hence $\Theta_{2}(n)=2\theta-c$ and
\begin{equation}\label{eq:cTheta_III}
  c\Theta_{2}(n)=2c\theta-\tfrac{1}{2}.
\end{equation}
In class~II, $\theta_{n-1}=\theta+1-c$, hence
$\Theta_{2}(n)=2\theta+1-c$ and
\begin{equation}\label{eq:cTheta_II}
  c\Theta_{2}(n)=2c\theta+c-\tfrac{1}{2}.
\end{equation}

The orbit $(\theta_{n})_{n\ge 1}$ is the forward
orbit of $\{c+d\}$ under the irrational rotation
$\theta\mapsto\frc{\theta+c}$. By Weyl's
equidistribution theorem, this orbit is
equidistributed in $[0,1)$, so it visits every
non-empty open sub-interval of $[0,1)$ at infinitely
many indices~$n$.

\begin{lemma}\label{lem:lower}
If $d<\sqrt{2}/2$, then~\eqref{eq:main}
fails at infinitely many~$n$.
\end{lemma}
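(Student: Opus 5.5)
The plan is to work entirely in class~II, $\theta=\theta_{n}\in[1-c,c)$. There both $n$ and $n-1$ are non-repeats, so (N$_{-}$) forbids $m(n)\le -1$. I will show that for $d<\sqrt{2}/2$ the value $m(n)$ is forced to be $\le -1$ on a nonempty open subinterval of class~II, and then finish by equidistribution.

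First, compute the relevant quantity on class~II. By~\eqref{eq:cTheta_II},
\[
  D-c\Theta_{2}(n)=D-2c\theta-c+\tfrac12 .
\]
The characterization of $m(n)$ gives $m(n)\le -1$ exactly when $D-c\Theta_{2}(n)<\tfrac12$, that is, when
\[
  \theta>\theta^{*}:=\frac{D-c}{2c}.
\]
At the canonical shift $d=c=\sqrt{2}/2$ one has $D=c(2c+1)=1+c$, so $\theta^{*}=1/(2c)=c$. This is exactly the right end of class~II, which explains why $d=\sqrt{2}/2$ is the borderline. Since $D=d(2c+1)$ is strictly increasing in~$d$, the hypothesis $d<\sqrt{2}/2$ gives $D<1+c$ and hence $\theta^{*}<c$.

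Next, define the open interval
\[
  U=\bigl(\max(\theta^{*},1-c),\,c\bigr),
\]
which is nonempty because $1-c<c$ and $\theta^{*}<c$. Take any $n\ge 2$ with $\theta_{n}\in U$. Then $m(n)\le -1$, so $a(S(n))\le n-1$. Monotonicity of~$a$ gives $a(a(S(n)))\le a(n-1)$. Since $n-1$ is a non-repeat (class~II), $a(n-1)=a(n)-1<a(n)$, so~\eqref{eq:main} fails at~$n$. If $\theta^{*}$ is very negative, for instance when $d$ is far below, then $U$ is all of the interior of class~II and the same argument applies verbatim. The case $m(n)\le -2$ is covered as well, because the inequality only used $m(n)\le -1$.

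Finally, the orbit $(\theta_{n})_{n\ge 1}$ is the forward orbit of an irrational rotation by $c=1/\sqrt{2}$. By Weyl's theorem, as recalled just before the statement, it visits the nonempty open interval $U$ for infinitely many~$n$, and in particular for infinitely many $n\ge 2$. This yields infinitely many failures.

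The only step needing care is the bookkeeping of the half-open intervals defining $m(n)$, that is, confirming that the strict inequality $\theta>\theta^{*}$ is the correct condition for $m(n)\le -1$. Since $U$ is open, boundary effects cannot matter, so I do not expect a real obstacle.
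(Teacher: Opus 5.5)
Your proposal is correct and follows the paper's proof essentially step for step. Both arguments work in class~II, where $\theta\in[1-c,c)$. Both derive the threshold $\theta>(D-c)/(2c)$, which lies below $c$ exactly when $D<1+c$, and both use the resulting violation of (N$_{-}$). You re-derive that violation directly from monotonicity and $a(n-1)=a(n)-1$, and then conclude by Weyl density of the rotation orbit in the open interval accumulating at $\theta=c$.
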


\begin{proof}
At $d=c$ one has $D=c(2c+1)=2c^{2}+c=1+c$.
So $d<c$ is equivalent to $D<1+c$.
Work in class~II, where $\theta\in[1{-}c,c)$ and
$c\Theta_{2}=2c\theta+c-1/2$ by~\eqref{eq:cTheta_II}.
As $\theta\to c^{-}$,
$c\Theta_{2}\to 2c^{2}+c-1/2=c+1/2$, and so
\[
  D-c\Theta_{2}\ \longrightarrow\ D-c-\tfrac{1}{2}.
\]
The hypothesis $D<1+c$ gives $D-c-1/2<1/2$, so
the inequality $D-c\Theta_{2}(n)<1/2$ defines an
open sub-interval of class~II accumulating at
$\theta=c$. Explicitly, solving $D-c\Theta_{2}<1/2$
for $\theta$ yields
\begin{equation*}  \theta>\frac{D-c}{2c},
\end{equation*}
and the right-hand side is $<c$ by hypothesis.
For every~$n$ with $\theta_{n}$ in this
sub-interval, $m(n)\le -1$.

Both $n$ and $n-1$ are non-repeats throughout
class~II, so (N$_{-}$) is violated at each such~$n$.
Density of the orbit provides infinitely many such~$n$,
and~\eqref{eq:main} fails at infinitely many~$n$.
\end{proof}

The range above the interval is treated the same way.

\begin{lemma}\label{lem:upper}
If $d>2(\sqrt{2}-1)$, then~\eqref{eq:main}
fails at infinitely many~$n$.
\end{lemma}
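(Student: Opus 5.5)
The plan mirrors the proof of Lemma~\ref{lem:lower}, but works in class~III and uses the condition (N$_{+}$) instead of (N$_{-}$).

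\emph{Step 1: translate the hypothesis into a condition on $D$.} With $c=1/\!\sqrt{2}$ one has $2c+1=\sqrt{2}+1$. At $d=2(\sqrt{2}-1)$ this gives
\[
  D=d(2c+1)=2(\sqrt{2}-1)(\sqrt{2}+1)=2 .
\]
Since $D$ is increasing in $d$, the hypothesis $d>2(\sqrt{2}-1)$ is equivalent to $D>2$.

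\emph{Step 2: locate a failure set in class~III.} Here $\theta=\theta_{n}\in[c,1)$, so $n$ is a non-repeat. By~\eqref{eq:cTheta_III}, $c\Theta_{2}(n)=2c\theta-\tfrac12$, which is increasing in $\theta$. As $\theta\to c^{+}$ it tends to $2c^{2}-\tfrac12=\tfrac12$, so
\[
  D-c\Theta_{2}(n)\ \longrightarrow\ D-\tfrac12 .
\]
Because $D>2$, this limit exceeds $\tfrac32$. Solving $D-c\Theta_{2}\ge\tfrac32$ explicitly gives $\theta\le (D-1)/(2c)$, and $(D-1)/(2c)>c$ holds exactly when $D>2$. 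Hence
\[
  U=\bigl(c,\ \min\{(D-1)/(2c),\,1\}\bigr)
\]
is a nonempty open sub-interval of class~III. For every $n$ with $\theta_{n}\in U$, the characterization of $m(n)$ gives $m(n)\ge 1$, while $n$ is a non-repeat. This contradicts (N$_{+}$), so~\eqref{eq:main} fails at $n$. This holds whether $m(n)=1$, which violates the repeat requirement, or $m(n)\ge 2$, which fails unconditionally as shown before the lemmas.

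\emph{Step 3: conclude by equidistribution.} By Weyl's theorem, applied as in Lemma~\ref{lem:lower}, the orbit $(\theta_{n})$ visits $U$ for infinitely many $n$. Hence~\eqref{eq:main} fails at infinitely many $n$.

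There is no real obstacle; the point requiring care is Step~2. One must check that the left end of $U$ sits at $\theta=c$, where $\theta_{n-1}=\theta-c\in[0,1-c)$, so formula~\eqref{eq:cTheta_III} is valid on $U$. One must also check that $U$ is nonempty precisely when $D>2$, which shows the endpoint $2(\sqrt{2}-1)$ is sharp for this mechanism.
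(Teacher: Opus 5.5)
Your proposal is correct and follows the paper's proof essentially step for step: you reduce the hypothesis to $D>2$, isolate the open sub-interval of class~III just to the right of $\theta=c$ on which $m(n)\ge 1$ at a non-repeat, which violates (N$_{+}$), and conclude by Weyl density. Your side remark about $m(n)\ge 2$ is harmless but not needed, since (N$_{+}$) is a valid necessary condition for every $m(n)\ge 1$.
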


\begin{proof}
Set $d_{\mathrm{R}}=2(\sqrt{2}-1)=4c-2$. Compute $D$ at $d_{\mathrm{R}}$:
\[
  D(d_{\mathrm{R}})=(4c-2)(2c+1)=8c^{2}+4c-4c-2=4-2=2,
\]
using $2c^{2}=1$. So $d>d_{\mathrm{R}}$ is equivalent to $D>2$.
Work in class~III, where $\theta\in[c,1)$ and
$c\Theta_{2}=2c\theta-1/2$ by~\eqref{eq:cTheta_III}.
As $\theta\to c^{+}$,
$c\Theta_{2}\to 2c^{2}-1/2=1/2$, and so
\[
  D-c\Theta_{2}\ \longrightarrow\ D-\tfrac{1}{2}.
\]
The hypothesis $D>2$ gives $D-1/2>3/2$, so the
inequality $D-c\Theta_{2}(n)\ge 3/2$ defines an
open sub-interval of class~III accumulating at
$\theta=c$. Explicitly, solving $D-c\Theta_{2}\ge 3/2$
for $\theta$ yields
\begin{equation*}  \theta\le\frac{D-1}{2c},
\end{equation*}
and the right-hand side is $>c$ by hypothesis.
For every~$n$ with $\theta_{n}$ in this
sub-interval, $m(n)\ge 1$.

Throughout class~III, $n$ is a non-repeat, so (N$_{+}$)
is violated. Density of the orbit supplies infinitely
many such~$n$, and~\eqref{eq:main} fails at infinitely
many~$n$.
\end{proof}

\begin{remark}
The two endpoints are included for slightly different reasons.
At the left endpoint $d=c$, one has $D=1+c$, and the
obstruction in Lemma~\ref{lem:lower} would require
\[
  \theta>\frac{D-c}{2c}=c,
\]
impossible in class~II, where $\theta<c$.

At the right endpoint $d=d_{\mathrm{R}}$, one has $D=2$, and the
obstruction in Lemma~\ref{lem:upper} can occur in class~III
only at the single point $\theta=c$. If $\{cn+d_{\mathrm{R}}\}=c$,
then
\[
  (n+3)c-2\in\mathbb{Z},
\]
impossible for $n\ge 2$, since $n+3\ne 0$ and
$c\notin\mathbb{Q}$. Thus neither endpoint is excluded by the
obstructions above, and only the right endpoint requires the
exclusion of a critical orbit point. Admissibility itself is
proved in Section~\ref{ssec:sufficiency}.
\end{remark}

\subsection{Sufficiency}
\label{ssec:sufficiency}

The sufficiency of $d\in I$ rests on the following
observation. The first nesting $a(S(n))$ may exceed
$n$ by one at repeat positions, and the outer $a$
cancels that excess because $a(n+1)=a(n)$ at every
repeat position. This is isolated as a lemma, from
which the theorem follows.

\begin{lemma}\label{lem:absorption}
Let $c=1/\!\sqrt{2}$, $a(n)=\fl{cn+d}$, and
$S(n)=a(n)+a(n-1)$. For every $d\in I$ and every
$n\ge 2$, $a(S(n))\in\{n,n+1\}$, with the finer
dichotomy
\begin{itemize}
\item if $\theta_{n}\ge 1-c$, that is, if $n$ is a
  non-repeat, then $a(S(n))=n$,
\item if $\theta_{n}<1-c$, that is, if $n$ is a repeat,
  then $a(S(n))\in\{n,n+1\}$.
\end{itemize}
\end{lemma}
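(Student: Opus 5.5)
The plan is to work entirely with the integer $m(n)=a(S(n))-n$ introduced in the necessity subsection, which satisfies
\[
  m(n)=k\iff D-c\Theta_{2}(n)\in\bigl[k+\tfrac12,\,k+\tfrac32\bigr),
  \qquad D=d(2c+1),\ \Theta_{2}(n)=\theta_{n}+\theta_{n-1}.
\]
First I translate the hypothesis $d\in I$ into a condition on $D$. The computations in Lemmas~\ref{lem:lower} and~\ref{lem:upper} give $D=1+c$ at $d=c$ and $D=2$ at $d=2(\sqrt2-1)$. Since $D$ is increasing in $d$, the hypothesis becomes $D\in[1+c,\,2]$.

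Next I treat the three ranges I, II, III of $\theta=\theta_{n}$ separately. In each range I compute the exact interval swept by $D-c\Theta_{2}(n)$ and compare it with the windows $[\tfrac12,\tfrac32)$ (for $m=0$) and $[\tfrac12,\tfrac52)$ (for $m\in\{0,1\}$). For $n\ge2$, $\theta_{n-1}=\frc{\theta_{n}-c}$, so $\Theta_{2}$ is an affine function of $\theta$ on each range.

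\emph{Class III}, $\theta\in[c,1)$. By~\eqref{eq:cTheta_III}, $c\Theta_{2}=2c\theta-\tfrac12\in[\tfrac12,\,2c-\tfrac12)$. Hence
\[
  D-c\Theta_{2}\in\bigl(D-2c+\tfrac12,\ D-\tfrac12\bigr].
\]
The lower end is $\ge\tfrac12$ because $D\ge1+c>2c$. The upper end is $<\tfrac32$ when $D<2$.

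\emph{Class II}, $\theta\in[1-c,c)$. By~\eqref{eq:cTheta_II}, $c\Theta_{2}\in[3c-\tfrac32,\,c+\tfrac12)$. Hence
\[
  D-c\Theta_{2}\in\bigl(D-c-\tfrac12,\ D-3c+\tfrac32\bigr].
\]
The lower end is $\ge\tfrac12$ exactly when $D\ge1+c$. The upper end is $<\tfrac32$ because $D\le2<3c$.

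\emph{Class I}, $\theta\in[0,1-c)$. Here $\theta_{n-1}=\theta+1-c$, so $c\Theta_{2}=2c\theta+c-\tfrac12\in[c-\tfrac12,\,3c-\tfrac32)$. Hence
\[
  D-c\Theta_{2}\in\bigl(D-3c+\tfrac32,\ D-c+\tfrac12\bigr].
\]
The lower end is $\ge\tfrac12$ since $D\ge1+c>3c-1$. The upper end is $<\tfrac52$ since $D\le2<2+c$. This gives $m(n)\in\{0,1\}$ in class~I.

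Putting the three classes together, $m(n)=0$ on the non-repeat classes II and III, and $m(n)\in\{0,1\}$ on the repeat class I, which is the claimed dichotomy.

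The main obstacle is the one place where a closed endpoint of $I$ meets an open side of the window. This is the right endpoint $D=2$ in class~III: there the supremum $D-\tfrac12=\tfrac32$ is attained exactly at $\theta=c$, where $m(n)$ would be $1$. To exclude it I reuse the argument of the remark after Lemma~\ref{lem:upper}. The condition $\frc{cn+d_{\mathrm R}}=c$ forces $(n+3)c-2\in\mathbb{Z}$, which is impossible for $n\ge2$ because $c$ is irrational. So the orbit never hits $\theta=c$, and the strict inequality survives along the orbit.

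At the left endpoint $D=1+c$, the class~II bound $D-c-\tfrac12=\tfrac12$ is only an infimum approached as $\theta\to c^{-}$, and the window is closed on the left there, so nothing more is needed. I will double-check this endpoint bookkeeping carefully, since all other inequalities hold with visible slack.
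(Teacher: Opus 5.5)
Your proof is correct and takes essentially the same route as the paper. You convert $d\in I$ into $D\in[1+c,2]$, run the same three-range case analysis of $\theta_n$ (your window on $D-c\Theta_{2}(n)$ is the paper's condition $\xi\in[k,k+1)$ with $\xi=D-c\Theta_{2}(n)-\tfrac12$), and exclude the single critical point $\theta_n=c$ at $d=2(\sqrt2-1)$ by the irrationality of $c$, just as the paper does.
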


\begin{proof}
Write $\theta=\theta_{n}$ and $D=d(2c{+}1)$.
We distinguish three ranges of~$\theta$.

Suppose first that $\theta\in[c,1)$.
Then $S(n)=2a(n)$ and
$a(S(n))=\fl{n+\xi}$ with
$\xi=D-2c\theta\in(D-2c,\;D-1]$.
Since $d\ge c$ gives $\xi>0$
and $d\le 4c{-}2$ gives $\xi\le 1$
with equality only when $d=4c{-}2$ and $\theta=c$
(requiring $c(n{+}3)\in\mathbb{Z}$, impossible),
$\xi\in(0,1)$ and $a(S(n))=n$.

Suppose next that $\theta\in[1{-}c,c)$.
Then $S(n)=2a(n)-1$ and
$\xi=D-c(2\theta{+}1)\in(0,1)$
by the same bounds on~$D$, hence $a(S(n))=n$.
The two sub-cases cover non-repeat positions.

Suppose finally that $\theta\in[0,1{-}c)$, so
$n$ is a repeat.
Then $\xi\in(0,2)$, whence
$a(S(n))\in\{n,n{+}1\}$.
\end{proof}

\begin{proof}[Proof of Theorem~\ref{thm:interval}, sufficiency]
Lemma~\ref{lem:absorption} gives $a(S(n))\in\{n,n+1\}$
for every $d\in I$ and $n\ge 2$, so the shift $m(n)$
introduced in Section~\ref{ssec:necessity} takes
values in~$\{0,1\}$. The a priori bound
$|m(n)|\le 1$ discussed there therefore holds on~$I$,
and the conditions (N$_{+}$) and (N$_{-}$) become
sufficient as well as necessary. Concretely, the
lemma gives $a(S(n))=n$ at non-repeat positions, so
$a(a(S(n)))=a(n)$ trivially. At repeat positions
$a(S(n))\in\{n,n+1\}$, and since $n$ is a repeat,
$a(n{+}1)=a(n)$, so $a(a(S(n)))=a(n)$ in both
subcases. The triple-nested equation therefore holds
for every $n\ge 2$ and every $d\in I$.
\end{proof}

\subsection{The Ostrowski--Pell setting of the \texorpdfstring{$r=2$}{r=2} triple-nested family}
\label{sec:pell_ostrowski}

The endpoint formulas, the return-time structure
of the defect set, and the underlying irrational
rotation are naturally expressed in the
Ostrowski--Pell setting attached to the
quadratic irrational $\tau=1+\sqrt{2}$, organized
by Fokkink~\cite{Fokkink} as a continuation of the
Wythoff array of Conway and Ryba.

\subsubsection*{The shared continued fraction}

The slope $c=1/\!\sqrt{2}$ has continued-fraction
expansion
\[
  c=[0;1,2,2,2,\ldots],
\]
whose convergents $p_{k}/q_{k}$ have denominators
\[
  q_{k}=1,3,7,17,41,99,239,577,\ldots
\]
(\href{https://oeis.org/A001333}{\texttt{A001333}}),
satisfying the recurrence
$q_{k+1}=2q_{k}+q_{k-1}$. The characteristic
equation $x^{2}=2x+1$ of this recurrence has
dominant root $\tau=1+\sqrt{2}$, the silver ratio, with
$\tau^{-1}=\sqrt{2}-1$.
Fokkink's Pell tower is built from the same
recurrence as the $h=2$ instance of the family
of bi-infinite sequences
\[
  X_{n+1}=h\,X_{n}+X_{n-1}, \qquad h\ge 1,
\]
where $h$ is a positive integer parameter of the
tower, unrelated to the shift~$d$ of the Beatty
family. The case $h=1$ recovers the Fibonacci
recurrence and the Wythoff array of Morrison and
Kimberling.

\subsubsection*{The shared numeration and the wall terms}

Every positive integer admits a unique
\emph{Ostrowski representation}
\[
  n=\sum_{k\ge 1}e_{k}q_{k}
\]
with digits $e_{k}\in\{0,1,2\}$, subject to the condition
that $e_{k}=2$ implies $e_{k-1}=0$ for $k\ge 2$.
Fokkink's tower carries its own numeration, built on the
denominators $D_{k}=1,2,5,12,\ldots$ of
$\sqrt{2}-1=[0;\overline{2}]$. The two weight sequences
satisfy the same recurrence and differ in their initial
terms, so the numeration used here is not the one attached
to the tower. Fokkink distinguishes two natural Beatty
sequences attached to the tower. The first
column~$A_{1,k}$ is the non-homogeneous Beatty
sequence
\[
  A_{1,k}=\Bigl\lfloor\frac{k\tau}{\tau-1}
  -\frac{1}{\tau(\tau-1)}\Bigr\rfloor
  =\Bigl\lfloor k\cdot(1{+}c)-\tfrac{1}{2}(2{-}\sqrt{2})\Bigr\rfloor
\]
of slope $\tau/(\tau-1)=1+c\approx 1.707$, by
Corollary~2.4 of Fokkink~\cite{Fokkink}. The sequence
$A_{k,0}$ of wall terms is the non-homogeneous Beatty
sequence
\begin{equation}\label{eq:wall_col}
  A_{k,0}=\Bigl\lfloor\frac{k\tau}{\tau+1}\Bigr\rfloor
  =\Bigl\lfloor\frac{k}{\sqrt{2}}\Bigr\rfloor
\end{equation}
of slope $\tau/(\tau+1)=1/\sqrt{2}=c$, by
Corollary~2.5 of Fokkink~\cite{Fokkink}. For the Pell tower
the wall terms form
\href{https://oeis.org/A049472}{\texttt{A049472}}.
Our canonical Beatty solution
\[
  a_{\mathrm{B}}(n)=\bigl\lfloor(n{+}1)/\!\sqrt{2}\bigr\rfloor
\]
is a shifted version of the wall terms~\eqref{eq:wall_col},
not of the first column. The difference word of
$a_{\mathrm{B}}$ is Sturmian of slope~$c$, and its
multiplicity structure gives a Rayleigh--Fraenkel
complementary partition of $\mathbb{Z}_{\ge 1}$
with slopes $1+\sqrt{2}$ and $(2+\sqrt{2})/2$
(see~\cite{Fraenkel}), the two sequences
\href{https://oeis.org/A003151}{\texttt{A003151}} and
\href{https://oeis.org/A003152}{\texttt{A003152}}.

\subsubsection*{Endpoint and defect interpretation}

The endpoints of the interval~$I$, the way the
equation fails beyond each of them, and the boundary
defect set are all expressed in this setting. The left endpoint
$d=\sqrt{2}/2$ is the unique shift for which the
strong equation $a(S(n))=n$ holds, producing
exactly the wall terms~\eqref{eq:wall_col}
(shifted). The right endpoint
$d=2(\sqrt{2}-1)=2/(1+\sqrt{2})$ is the maximal
shift for which the defect of the
first nesting is still absorbed by the repeat
structure of the underlying Sturmian difference
word. Both endpoints lie in
$\mathbb{Q}(\sqrt{2})\cap[0,1)$.
The equation~\eqref{eq:main} holds at both endpoints of~$I$
and fails just beyond each of them. This sharpness is
produced by orbit density near the single critical point
$\theta=c$, as in Lemmas~\ref{lem:lower}
and~\ref{lem:upper}.
The Pell-convergent identities
\[
  cq_{k}=p_{k}+(-1)^{k}\varepsilon_{k},\qquad
  2cp_{k}=q_{k}-2c(-1)^{k}\varepsilon_{k},
\]
with $\varepsilon_{k}=|cq_{k}-p_{k}|\to 0$, provide
explicit Ostrowski witnesses of the dense orbit
approaching~$c$.
The defect set at the upper endpoint, studied in
Section~\ref{sec:defect}, is the return-time set
of the irrational rotation
$\theta\mapsto\frc{\theta+c}$ to the explicit
interval $[1-c,1/2]$, and its gap alphabet
$\{3,4,7\}=\{q_{2},\,q_{3}-q_{2},\,q_{3}\}$ is built from two
consecutive convergent denominators and their difference.

\subsubsection*{Generalization to \texorpdfstring{$r=s^{2}+1$}{r=s\textasciicircum 2+1}}

Remark~\ref{rem:t_one} singled out the family
$r=s^{2}+1$, $s\ge 1$, the boundary case $t=1$. This is
exactly the family for which
$c=1/\!\sqrt{r}$ has continued-fraction expansion
$[0;s,\overline{2s}]$ of minimal period one, so its
convergent denominators obey the recurrence
$X_{n+1}=2s\,X_{n}+X_{n-1}$ of Fokkink's tower of parameter
$h=2s$. The initial quotient~$s$ differs from the tower
parameter, so the two numerations built on these
denominators are not the same. The Pell tower
($s=1$) is the simplest case. For every $s\ge 2$ the order
$r=s^{2}+1$ is a non-square, so its strong-equation
interval~$J_{r}$ is non-degenerate
(Theorem~\ref{thm:shift_interval}) and satisfies
$J_{r}\subseteq W_{r}^{(2)}$ (Corollary~\ref{cor:chain}). The
triple-nested equation therefore admits a non-trivial interval
of shifts for every $s\ge 1$, equal to~$I$ when $s=1$
(Theorem~\ref{thm:interval}) and containing~$J_{r}$ when
$s\ge 2$. The exact endpoints of $W_{r}^{(2)}$ and the
associated defect structures for $s\ge 2$ are left for future
work.

\subsection{The triple-nested equation for \texorpdfstring{$r=3$}{r=3}}
\label{sec:interval_r3}

The same argument extends to $r=3$. The three
sliding terms produce a larger case decomposition,
and the repeat structures of $n$ and of $n-1$ no
longer coincide, but the underlying logic, local
cases plus orbit density, gives a full
\emph{if and only if}.

Fix $r=3$, $c=1/\!\sqrt{3}$, and
$a(n)=\fl{cn+d}$. Write
$S(n)=a(n)+a(n{-}1)+a(n{-}2)$ and
$\Theta(n)=\theta_{n}+\theta_{n-1}+\theta_{n-2}$
with $\theta_{n}=\frc{cn+d}$. Since $3c^{2}=1$,
\begin{equation*}  c\,S(n)+d=n-1+D-c\Theta(n),\qquad
  D=d(\sqrt{3}+1),
\end{equation*}
and $a(S(n))=n-1+\fl{D-c\Theta(n)}$. Setting
$a(S(n))=n+m(n)$, the shift is
\begin{equation}\label{eq:m_r3}
  m(n)=\fl{D-c\Theta(n)}-1.
\end{equation}
As for $r=2$, the triple-nested equation
$a(a(S(n)))=a(n)$ implies the two basic necessary
conditions
\begin{itemize}
\item[(N$_{+}$)] $m(n)\ge 1\ \Rightarrow\ a(n+1)=a(n)$
($n$ is a repeat),
\item[(N$_{-}$)] $m(n)\le -1\ \Rightarrow\ a(n-1)=a(n)$
($n-1$ is a repeat),
\end{itemize}
but they are jointly equivalent to the equation only
under the a priori bound $|m(n)|\le 1$. We show
below that this bound holds uniformly on $I_{3}$,
so that on $I_{3}$ the necessity and the
sufficiency parts can be read off from
(N$_{+}$) and (N$_{-}$) together.

\begin{theorem}\label{thm:interval_r3}
A Beatty sequence $a(n)=\fl{cn+d}$ with $c>0$ satisfies
$a(a(S(n)))=a(n)$ for all $n\ge 3$ if and only if
$c=1/\!\sqrt{3}$ and
\begin{equation*}  d\in I_{3}=\Bigl[\,\frac{\sqrt{3}+3}{6},\;
  \frac{5\sqrt{3}-3}{6}\,\Bigr].
\end{equation*}
The interval is centered at $\sqrt{3}/2$ with half-width
$(2\sqrt{3}-3)/6$, and both endpoints lie in
$\mathbb{Q}(\sqrt{3})$.
\end{theorem}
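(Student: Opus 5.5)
By Theorem~\ref{thm:slope} (with $k=2$, $r=3$), a Beatty solution with $c>0$ has $c=1/\!\sqrt{3}$, so only the shift must be determined. I would follow the $r=2$ template: a local analysis in $\theta=\theta_n$, then Weyl density for necessity and an absorption lemma for sufficiency.

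\emph{Partition of the circle.} The jump points of $\theta\mapsto\Theta(n)$ are $0$, $2c-1$ and $c$. The repeat threshold is $1-c$. Since $2c-1<1-c<c$, I split $[0,1)$ into the four ranges $[0,2c{-}1)$, $[2c{-}1,1{-}c)$, $[1{-}c,c)$ and $[c,1)$. Using $\theta_{n-1}=\frc{\theta-c}$ and $\theta_{n-2}=\frc{\theta-2c}$, one gets $\Theta=3\theta-3c+k$ with $k=3,2,1$ on $[0,2c{-}1)$, $[2c{-}1,c)$ and $[c,1)$ respectively. Hence $c\Theta=\sqrt{3}\,\theta-1+kc$, and by~\eqref{eq:m_r3},
\[
  m(n)=\bigl\lfloor D+1-kc-\sqrt{3}\,\theta\bigr\rfloor-1,
\]
which is piecewise constant-or-decreasing in $\theta$. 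The repeat data are also explicit: $n$ is a repeat iff $\theta<1-c$, and $n-1$ is a repeat iff $\frc{\theta-c}<1-c$, that is iff $\theta\ge c$.

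\emph{Necessity.} At the endpoints of $I_3$ one computes $D=(\sqrt3+3)(\sqrt3+1)/6=1+2c$ and $D=(5\sqrt3-3)(\sqrt3+1)/6=2+c$. Mirroring Lemmas~\ref{lem:lower} and~\ref{lem:upper}, both obstructions sit at the critical point $\theta=c$.
\begin{itemize}
\item For the lower bound, work on $[2c{-}1,c)\cap[1{-}c,c)$, where both $n$ and $n-1$ are non-repeats. There $m(n)\le-1$ iff $\sqrt3\,\theta>D-2c$. As $\theta\to c^-$ the left side tends to $1$, so if $D<1+2c$ an open interval below $c$ violates (N$_-$).
\item For the upper bound, work on $[c,1)$, where $n$ is a non-repeat. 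There $m(n)\ge1$ iff $D-c-\sqrt3\,\theta\ge1$, which near $\theta=c^+$ reads $D-c-1\ge1$. So if $D>2+c$ an open interval above $c$ violates (N$_+$).
\end{itemize}
Weyl equidistribution then gives infinitely many failures in each case.

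\emph{Sufficiency.} This is an absorption lemma in the style of Lemma~\ref{lem:absorption}. For $D\in[1+2c,\,2+c]$, I bound $D+1-kc-\sqrt3\,\theta$ on each of the four ranges and show the following:
\begin{itemize}
\item $|m(n)|\le1$ everywhere;
\item $m(n)\le0$ whenever $\theta\ge1-c$, so (N$_+$) holds;
\item $m(n)\ge0$ whenever $\theta<c$, so (N$_-$) holds, with $m=-1$ allowed only on $[c,1)$, where $n-1$ is a repeat.
\end{itemize}
Given the a priori bound $|m|\le1$, (N$_\pm$) then imply $a(a(S(n)))=a(n)$ exactly as in the $r=2$ sufficiency proof.

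At each endpoint, equality occurs only at a boundary value $\theta=c$ or $\theta\in\{0,2c-1\}$. Such a value forces $cn+d-jc\in\mathbb{Z}$ for some small $j$, and comparing rational and $\sqrt3$-parts shows this is impossible for $n\ge3$, as in the remark after Lemma~\ref{lem:upper}. The center $\sqrt3/2$ and the half-width $(2\sqrt3-3)/6$ then follow from $d=D/(\sqrt3+1)$.

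\emph{Main obstacle.} The hardest step is the sufficiency bookkeeping: verifying on all four ranges, including the truncated range $[0,2c{-}1)$ where $k=3$, that no range forces $m=\pm2$ or a forbidden sign for $D$ anywhere in the closed interval. The endpoint exclusion of critical orbit points also needs care. I would tabulate the extreme values of $D+1-kc-\sqrt3\,\theta$ per range exactly in $\mathbb{Q}(\sqrt3)$.
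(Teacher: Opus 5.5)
Your proposal is correct and follows the paper's proof. Your four $\theta_n$-ranges are the paper's cases D, A, B, C reparametrized by $\theta_n=\frc{\theta_{n-2}+2c}$, and your critical point $\theta=c$ is the paper's $t=1-c$; the necessity step (density near that point violating (N$_\pm$)), the sufficiency step (the a priori bound $|m|\le 1$ together with (N$_\pm$)), and the irrationality exclusion at $d_{\mathrm{R}}$ all coincide with the paper's. One small correction: only $\theta=c$ at the right endpoint actually needs excluding, since the points $0$ and $2c-1$ never produce equality, and the left endpoint needs no exclusion because $\theta<c$ is already strict there.
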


By Theorem~\ref{thm:slope} (with $k=2$, $r=3$) the slope is
necessarily $1/\!\sqrt{3}$. This subsection characterizes the
shift. We describe the four
combinatorial cases, prove sufficiency, then prove necessity
by accumulating the obstruction at $\theta=1-c$.

\subsubsection*{The four cases}

Parametrize by $t=\theta_{n-2}\in[0,1)$, so that
$\theta_{n-1}=\frc{t+c}$ and
$\theta_{n}=\frc{t+2c}$. Since $c\in(1/2,1)$ and
$2c\in(1,2)$, the wraps occur at
$t=1-c$ for $\theta_{n-1}$ and at $t=2-2c$ and
$t=1-2c\,(<0)$ for $\theta_{n}$. A direct case
analysis against the threshold
$\theta_{\cdot}<1-c$ that defines a repeat yields
the four cases
\[
\begin{array}{l|l|l|l|l}
  \text{case} & \text{range of }t & \theta_{n-1} & \theta_{n} & \text{status of }(n,n-1) \\
  \hline
  \text{A} & [0,\,2{-}3c) & t+c & t+2c-1 & \text{$n$ repeat, $n-1$ non-repeat}\\
  \text{B} & [2{-}3c,\,1{-}c) & t+c & t+2c-1 & \text{both non-repeat}\\
  \text{C} & [1{-}c,\,2{-}2c) & t+c-1 & t+2c-1 & \text{$n$ non-repeat, $n-1$ repeat}\\
  \text{D} & [2{-}2c,\,1) & t+c-1 & t+2c-2 & \text{$n$ repeat, $n-1$ non-repeat}
\end{array}
\]

The sum $\Theta(n)=t+\theta_{n-1}+\theta_{n}$ is
piecewise affine in~$t$ and takes the values
\begin{equation}\label{eq:Theta_cases}
  \Theta(t)=\begin{cases}
    3t+3c-1,&t\in\mathrm{A}\cup\mathrm{B},\\[2pt]
    3t+3c-2,&t\in\mathrm{C},\\[2pt]
    3t+3c-3,&t\in\mathrm{D}.
  \end{cases}
\end{equation}
The ranges of $\Theta$ on the four cases are
\begin{equation}\label{eq:Theta_ranges}
\begin{aligned}
  \Theta(\mathrm{A}) &=[\sqrt{3}-1,\;5-2\sqrt{3}),\\
  \Theta(\mathrm{B}) &=[5-2\sqrt{3},\;2),\\
  \Theta(\mathrm{C}) &=[1,\;4-\sqrt{3}),\\
  \Theta(\mathrm{D}) &=[3-\sqrt{3},\;\sqrt{3}).
\end{aligned}
\end{equation}
The argument uses two extrema of~$\Theta$.

\emph{The lower extremum.} The non-repeat
positions for~$n$ are exactly $\mathrm{B}\cup\mathrm{C}$, and
\begin{equation}\label{eq:lower_extremum}
  \inf_{t\in\mathrm{B}\cup\mathrm{C}}\Theta(t)=1,
\end{equation}
attained only at the left endpoint $t=1-c$ of
case~C.

\emph{The upper extremum.} The non-repeat
positions for~$n-1$ are exactly
$\mathrm{A}\cup\mathrm{B}\cup\mathrm{D}$, and
\begin{equation}\label{eq:upper_extremum}
  \sup_{t\in\mathrm{A}\cup\mathrm{B}\cup\mathrm{D}}\Theta(t)=2,
\end{equation}
approached only at the left limit $t\to(1-c)^{-}$
from case~B.

Both extrema follow from~\eqref{eq:Theta_cases}
and~\eqref{eq:Theta_ranges} by direct inspection.

\subsubsection*{The threshold values of \texorpdfstring{$D$}{D}}

Compute $D$ at the two endpoints of $I_{3}$.
At $d_{\mathrm{L}}=(\sqrt{3}+3)/6$:
\[
  D_{\mathrm{L}}=\frac{\sqrt{3}+3}{6}\cdot(\sqrt{3}+1)
      =\frac{6+4\sqrt{3}}{6}
      =1+\frac{2}{\sqrt{3}}
      =1+2c.
\]
At $d_{\mathrm{R}}=(5\sqrt{3}-3)/6$:
\[
  \begin{aligned}
    D_{\mathrm{R}}&=\frac{5\sqrt{3}-3}{6}\cdot(\sqrt{3}+1)
      =\frac{15+5\sqrt{3}-3\sqrt{3}-3}{6}\\
    &=\frac{12+2\sqrt{3}}{6}
      =2+\frac{1}{\sqrt{3}}
      =2+c.
  \end{aligned}
\]
So $d\in I_{3}$ is equivalent to $D\in[1+2c,\,2+c]$.

\subsubsection*{Sufficiency}

Assume $d\in I_{3}$, equivalently $D\in[1+2c,\,2+c]$.
The shift $m(n)$ is then bounded a priori, so (N$_{+}$) and
(N$_{-}$) together are sufficient. The sum~$\Theta(n)$ ranges over
$\Theta(\mathrm{A})\cup\Theta(\mathrm{B})\cup
\Theta(\mathrm{C})\cup\Theta(\mathrm{D})
\subset[\sqrt{3}-1,\,4-\sqrt{3})$
by~\eqref{eq:Theta_ranges}. Multiplying by $c$ and
using $c\sqrt{3}=1$,
\[
  c\Theta(n)\in[1-c,\,4c-1),
\]
whence
\[
  D-c\Theta(n)\in(D-4c+1,\,D-1+c]
  \subset(1+2c-4c+1,\,2+c-1+c]
  =(2-2c,\,1+2c].
\]
In particular $D-c\Theta(n)\in(0,3)$, so
$\fl{D-c\Theta(n)}\in\{0,1,2\}$ and by~\eqref{eq:m_r3}
\begin{equation}\label{eq:m_bounded_r3}
  m(n)\in\{-1,0,1\}\qquad\text{for every }n\ge 3,
  \ d\in I_{3}.
\end{equation}
With~\eqref{eq:m_bounded_r3} at hand, (N$_{+}$) and
(N$_{-}$) jointly imply the triple-nested equation
on~$I_{3}$. The two conditions are verified case by case.

Requirement (N$_{+}$) forbids $m(n)\ge 1$ at
non-repeats of~$n$, which by~\eqref{eq:m_r3} is the
inequality $D-c\Theta(n)\ge 2$, equivalently
\begin{equation}\label{eq:thr_plus}
  \Theta(n)\le \frac{D-2}{c}=\sqrt{3}(D-2).
\end{equation}
The hypothesis $D\le 2+c$ gives
$\sqrt{3}(D-2)\le \sqrt{3}c=1$.

For $D<2+c$ (the interior of~$I_{3}$ on the right),
the bound in~\eqref{eq:thr_plus} is strict,
$\sqrt{3}(D-2)<1$, and~\eqref{eq:lower_extremum} gives
$\Theta(n)\ge 1>\sqrt{3}(D-2)$ on
$\mathrm{B}\cup\mathrm{C}$. Hence (N$_{+}$) holds
vacuously.

For $D=2+c$ (the right endpoint $d=d_{\mathrm{R}}$),
equality in~\eqref{eq:lower_extremum} is possible only at the single
point $t=1-c$. Writing $d_{\mathrm{R}}=(5c-1)/2$, the
equation $\{c(n-2)+d_{\mathrm{R}}\}=1-c$ unfolds to
$c(n-1)+(5c-1)/2-1\in\mathbb{Z}$. Multiplying by~$2$,
this reads $(2n+3)c=2k+3$ for some integer~$k$.
Since $2n+3\ne 0$ and $c$ is irrational, no
integer~$n$ solves this equation, the threshold is
never attained, and (N$_{+}$) holds.

Requirement (N$_{-}$) forbids $m(n)\le -1$ at
non-repeats of~$n-1$, which by~\eqref{eq:m_r3} is
the inequality $D-c\Theta(n)<1$, equivalently
\begin{equation*}  \Theta(n)>\frac{D-1}{c}=\sqrt{3}(D-1).
\end{equation*}
The hypothesis $D\ge 1+2c$ gives
$\sqrt{3}(D-1)\ge 2\sqrt{3}c=2$, and by~\eqref{eq:upper_extremum}
$\Theta(n)\le 2$ on
$\mathrm{A}\cup\mathrm{B}\cup\mathrm{D}$ with
the supremum not attained. For $D>1+2c$ the bound
is strict and (N$_{-}$) holds vacuously. At
$D=1+2c$ (the left endpoint $d=d_{\mathrm{L}}=(1+c)/2$),
the bound becomes $\Theta(n)>2$, which is impossible
on $\mathrm{A}\cup\mathrm{B}\cup\mathrm{D}$ since
the supremum~$2$ is not attained. Hence (N$_{-}$)
holds.

Sufficiency is established for every $d\in I_{3}$.

\subsubsection*{Necessity}

\emph{Right bound.} Assume $d>d_{\mathrm{R}}$, equivalently
$D>2+c$, so that $\sqrt{3}(D-2)>1$. On case~C the function
$\Theta(t)=3t+3c-2$ is continuous and increasing with
$\Theta(1-c)=1$. Hence there is an $\varepsilon>0$ with
\[
  U_{\mathrm{R}}=(1-c,\,1-c+\varepsilon)\subset\mathrm{C},
  \qquad
  \Theta(t)<\sqrt{3}(D-2)\ \ (t\in U_{\mathrm{R}}).
\]
For every orbit point in $U_{\mathrm{R}}$ one has $D-c\Theta(n)>2$,
hence $m(n)\ge 1$, while $n$ is a non-repeat (case~C). This
violates (N$_{+}$). The orbit $(\theta_{n-2})_{n\ge 3}$ is
dense in $[0,1)$ by Weyl's theorem, so it visits $U_{\mathrm{R}}$
infinitely often, and~\eqref{eq:main} fails at infinitely
many~$n$.

\emph{Left bound.} Assume $d<d_{\mathrm{L}}$, equivalently
$D<1+2c$. Then $\sqrt{3}(D-1)<2$, so the threshold
$\sqrt{3}(D-1)$ lies strictly below the supremum~$2$
on $\Theta(\mathrm{B})=[5-2\sqrt{3},2)$. The set
\[
  U_{\mathrm{L}}=\{t\in\mathrm{B}:\Theta(t)>\sqrt{3}(D-1)\}
       =\{t:t_{\star}<t<1-c\},
\]
where
$t_{\star}=\max\bigl((\sqrt{3}(D-1)+1)/3-c,\,2-3c\bigr)$,
is a non-empty open sub-interval of case~B
accumulating at the left limit of $t=1-c$. On
$U_{\mathrm{L}}$, we have $D-c\Theta(n)<1$, hence $m(n)\le -1$,
while $n-1$ is a non-repeat (case~B). This violates
(N$_{-}$). Density of the orbit yields infinitely
many failures.

\emph{Endpoints.} At $d=d_{\mathrm{R}}$, $D=2+c$ and the
threshold for $m(n)\ge 1$ is $\Theta\le 1$. The only
value in $\Theta(\mathrm{B}\cup\mathrm{C})$ satisfying this
is $\Theta=1$ at $t=1-c$, never attained. At
$d=d_{\mathrm{L}}$, $D=1+2c$ and the threshold for
$m(n)\le -1$ is $\Theta>2$, never attained on
$\mathrm{A}\cup\mathrm{B}\cup\mathrm{D}$. Both endpoints
are therefore included in~$I_{3}$.

This completes the proof of
Theorem~\ref{thm:interval_r3}.

\begin{remark}
The interval $I_{3}$ is symmetric around the
canonical shift $\sqrt{3}/2$, with half-width
$(2\sqrt{3}-3)/6$. Symmetry for $r=3$ contrasts
with the asymmetry of $I$ for $r=2$. In both cases
the endpoints arise in the same way. The threshold for
$m(n)=+1$ and the threshold for $m(n)=-1$ are
governed by extrema of~$\Theta$ over the non-repeat
sets of~$n$ and of $n-1$ respectively, and for $r=3$
these two extrema sit symmetrically about the center.
For $r=2$, the two-term window gives only one
non-repeat range coupling both constraints,
producing asymmetry.
\end{remark}

\begin{remark}
For a general non-square order~$r$, the same expansion reduces
the triple-nested equation~\eqref{eq:main} to finitely many
piecewise-affine conditions along the irrational rotation
orbit. For $r\ge 4$, however, the absorption may involve runs
of several consecutive equal values, rather than only the
repeat status of $(n,n-1)$ as for $r\in\{2,3\}$, and it is not
known whether the resulting admissible set $W_{r}^{(2)}$ is
always an interval, nor where its endpoints lie.
\end{remark}

At the right endpoint $d=(5\sqrt{3}{-}3)/6$,
the defect set begins
\[
  16,\,23,\,42,\,49,\,61,\,68,\,87,\,94,\,
  113,\,120,\ldots
\]
(\href{https://oeis.org/A395252}{\texttt{A395252}}),
with gap values numerically observed to be
$\{7,\,12,\,19\}$. The values $7$ and $19$ are consecutive
convergent denominators of the continued fraction
$[0;1,1,2,1,2,\ldots]$ of $1/\!\sqrt{3}$, and $12=19-7$. The defect set
appears numerically to have density $(2{-}\sqrt{3})/3$ in
$\mathbb{Z}_{\ge 1}$, matching the length of the rotation
interval that controls the absorption. A substitution
analogue of
Theorem~\ref{thm:defect_subst} for $r=3$ is formulated as an
open problem in Section~\ref{sec:open}.

\subsection{The iterated equations}
\label{ssec:composed}

The equation and its triple-nested form are the first two
members of the family of iterated equations $(E_{k})$ of
Section~\ref{sec:intro},
\[
  (E_{k})\qquad a^{\circ k}\bigl(S(n)\bigr)=a^{\circ(k-1)}(n)
  \quad (n\ge r),\qquad a^{\circ 0}=\mathrm{id}.
\]
By Theorem~\ref{thm:slope} a Beatty solution of $(E_{k})$
with $c>0$ has slope $c=1/\!\sqrt{r}$. For this slope, let
\[
  W_{r}^{(k)}=\bigl\{\,d\in\mathbb{R}:\
  a(n)=\fl{n/\!\sqrt{r}+d}\ \text{satisfies}\ (E_{k})
  \ \text{for all}\ n\ge r\,\bigr\},
\]
where, as throughout the paper, only shifts giving a positive
sequence are kept, namely $a(n)\ge 1$ for every $n\ge 1$.
Since the sequence is nondecreasing this is the single
requirement $a(1)\ge 1$, equivalently $d\ge 1-1/\!\sqrt{r}$.
The three
classification results of this paper read
$W_{r}^{(1)}=J_{r}$ for every non-square~$r$
(Theorem~\ref{thm:shift_interval}), $W_{2}^{(2)}=I$
(Theorem~\ref{thm:interval}), and $W_{3}^{(2)}=I_{3}$
(Theorem~\ref{thm:interval_r3}).

A single integer statistic passes from one member of the
family to the next. Let
\[
  \mu(n)=a\bigl(S(n)\bigr)-n
\]
be the signed defect of the equation at index~$n$, so that
$(E_{1})$ is the statement $\mu\equiv 0$ for all $n\ge r$. The
other members of the family reduce to the same defect, read
through a lower composition of~$a$.

\begin{proposition}\label{prop:composed}
For every $k\ge 1$,
\[
  (E_{k})\ \text{holds for all}\ n\ge r
  \iff
  a^{\circ(k-1)}\bigl(n+\mu(n)\bigr)=a^{\circ(k-1)}(n)
  \ \text{for all}\ n\ge r.
\]
\end{proposition}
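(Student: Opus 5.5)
This is essentially a rewriting identity, and the plan is to prove it pointwise in $n$: for each fixed $n\ge r$, the equation $(E_{k})$ at $n$ is equivalent to $a^{\circ(k-1)}(n+\mu(n))=a^{\circ(k-1)}(n)$. Quantifying over all $n\ge r$ then gives the stated equivalence. No property of Beatty sequences, and neither the slope nor the shift, should be needed. The argument only uses that $a$ maps integers to integers and that compositions associate.

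The single step is to write
\[
a^{\circ k}\bigl(S(n)\bigr)=a^{\circ(k-1)}\bigl(a(S(n))\bigr),
\]
valid for every $k\ge 1$ with $a^{\circ 0}=\mathrm{id}$. Then substitute $a(S(n))=n+\mu(n)$, which is just the definition of $\mu$. The left side of $(E_{k})$ at $n$ therefore equals $a^{\circ(k-1)}(n+\mu(n))$, while the right side is $a^{\circ(k-1)}(n)$, and the two equations coincide. For $k=1$ this says exactly that $(E_{1})$ holds at $n$ if and only if $\mu(n)=0$, which matches the remark preceding the proposition.

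The only point needing care is that every evaluation is well defined. Here $S(n)$ is an integer, so $a(S(n))$ and hence $n+\mu(n)$ are integers. The sequence $a$ is defined on all integers, since $a(m)=\fl{cm+d}$ makes sense for every $m$, or through the padding convention. So applying $a^{\circ(k-1)}$ at $n+\mu(n)$ causes no domain issue, even if $\mu(n)$ is negative. I do not expect a genuine obstacle.

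I would add one remark after the proof. Since $a$ is nondecreasing and $\mu(n)$ is an integer, the condition on the right can be read as follows: $a^{\circ(k-1)}$ is constant on the integer segment between $n$ and $n+\mu(n)$. This yields the increasing chain $W_{r}^{(k)}\subseteq W_{r}^{(k+1)}$, because constancy of $a^{\circ(k-1)}$ on a segment implies constancy of $a^{\circ k}$ on it.
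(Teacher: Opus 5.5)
Your proposal is correct and matches the paper's proof: substitute $a(S(n))=n+\mu(n)$ into $a^{\circ k}(S(n))=a^{\circ(k-1)}(a(S(n)))$ and compare with the right-hand side of $(E_{k})$. Your closing remark on constancy between $n$ and $n+\mu(n)$ and on the increasing chain is also how the paper itself follows up the proposition.
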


\begin{proof}
By the definition of~$\mu$ one has $a(S(n))=n+\mu(n)$, hence
\[
  a^{\circ k}\bigl(S(n)\bigr)
  =a^{\circ(k-1)}\bigl(a(S(n))\bigr)
  =a^{\circ(k-1)}\bigl(n+\mu(n)\bigr).
\]
The equation $(E_{k})$ equates the left-hand side with
$a^{\circ(k-1)}(n)$, which is the stated equivalence.
\end{proof}

Applying~$a$ once more carries each member of the family to
the next, which orders the shift sets.

\begin{corollary}\label{cor:chain}
$(E_{k})$ implies $(E_{k+1})$, so the admissible shifts form
an increasing chain
\[
  W_{r}^{(1)}\subseteq W_{r}^{(2)}\subseteq
  W_{r}^{(3)}\subseteq\cdots.
\]
\end{corollary}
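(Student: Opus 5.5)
The plan is to apply $a$ to both sides of $(E_{k})$ and read off the inclusion of the admissible sets from the definition of $W_{r}^{(k)}$.

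\emph{First step.} Fix $r$, a shift $d$, and $a(n)=\fl{n/\!\sqrt{r}+d}$. Suppose $(E_{k})$ holds for every $n\ge r$, that is,
\[
  a^{\circ k}\bigl(S(n)\bigr)=a^{\circ(k-1)}(n)\qquad(n\ge r).
\]
Both sides are integers at which $a$ can be evaluated, and $a$ is a function, so equal arguments give equal values. Applying $a$ to both sides gives
\[
  a^{\circ(k+1)}\bigl(S(n)\bigr)=a^{\circ k}(n)\qquad(n\ge r),
\]
which is exactly $(E_{k+1})$ for every $n\ge r$.

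\emph{Second step.} The index range $n\ge r$ is unchanged under this operation. The positivity requirement $d\ge 1-1/\!\sqrt{r}$ in the definition of $W_{r}^{(k)}$ does not depend on~$k$. Hence every $d\in W_{r}^{(k)}$ also lies in $W_{r}^{(k+1)}$, and induction on~$k$ gives the whole chain $W_{r}^{(1)}\subseteq W_{r}^{(2)}\subseteq W_{r}^{(3)}\subseteq\cdots$.

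\emph{Alternative via Proposition~\ref{prop:composed}.} The same conclusion follows from the defect criterion. If $a^{\circ(k-1)}(n+\mu(n))=a^{\circ(k-1)}(n)$ for all $n\ge r$, then applying $a$ gives the condition with $k$ in place of $k-1$, which is the criterion for $(E_{k+1})$.

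\emph{Obstacle.} There is no real obstacle here. The only point to check is that the iterates of $a$ are evaluated at integer arguments where $a$ is defined. This holds because $a$ is defined on all of $\mathbb{Z}$, by the Beatty formula, or by the padding convention $a(k)=0$ for $k<1$.
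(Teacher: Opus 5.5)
Your proposal is correct and matches the paper's proof: applying $a$ to both sides of $(E_{k})$ gives $a^{\circ(k+1)}(S(n))=a^{\circ k}(n)$ for every $n\ge r$. Since the positivity condition $d\ge 1-1/\!\sqrt{r}$ does not depend on~$k$, this yields $W_{r}^{(k)}\subseteq W_{r}^{(k+1)}$.
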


\begin{proof}
If $(E_{k})$ holds, then
\[
  a^{\circ(k+1)}\bigl(S(n)\bigr)
  =a\bigl(a^{\circ k}(S(n))\bigr)
  =a\bigl(a^{\circ(k-1)}(n)\bigr)
  =a^{\circ k}(n),
\]
which is $(E_{k+1})$. Every shift admissible for $(E_{k})$ is
therefore admissible for $(E_{k+1})$.
\end{proof}

Since $a$ is nondecreasing, every iterate $a^{\circ j}$ is
nondecreasing. The equality in
Proposition~\ref{prop:composed} therefore says that
$a^{\circ(k-1)}$ takes the same value at $n$ and at
$n+\mu(n)$, and hence the same value at every index lying
between them. In this reading
$(E_{k})$ asks that the displacement $n\mapsto n+\mu(n)$ leave
$a^{\circ(k-1)}$ unchanged.

For non-square~$r$ and $d\in J_{r}$ the defect vanishes, with
$\mu(n)=0$ for every $n\ge r$. For the two intervals
determined above at $k=2$, the case analyses show that
\[
  \mu(n)\in\{-1,0,1\}
\]
for every $n\ge r$, with $r=2$, $d\in I_{2}=I$, or $r=3$,
$d\in I_{3}$. Thus, for $r\in\{2,3\}$ and $d\in I_{r}\setminus J_{r}$, every
nonzero defect has size one. This bound is
established by the case analyses of
Sections~\ref{ssec:sufficiency} and~\ref{sec:interval_r3}. By
Proposition~\ref{prop:composed} the equation $(E_{2})$ says
that $a(n+\mu(n))=a(n)$ for every $n\ge r$, and where the
defect is nonzero this equates the values of~$a$ at two
consecutive indices. No analogous bound is asserted here
for arbitrary shifts or for $r\ge 4$.

If $a^{\circ(k-1)}$ takes the same value at two indices then
$a^{\circ k}$ takes the same value at those two indices, so the condition of
Proposition~\ref{prop:composed} can only become easier as~$k$
increases, in agreement with Corollary~\ref{cor:chain}.
Section~\ref{sec:open} takes up whether the increasing
chain $(W_{r}^{(k)})_{k\ge 1}$ is eventually constant and
what shape its members have.

\section{Internal structure}\label{sec:structure}

Throughout this section we consider the case $r=2$
and write $a_{\mathrm{B}}$ for the canonical Beatty
solution~\eqref{eq:canonical} and
$a_{\mathrm{G}}$ for the greedy solution.

\subsection{Sturmian words}

The difference sequence
$w_{\mathrm{B}}(n)=a_{\mathrm{B}}(n{+}1)-a_{\mathrm{B}}(n)
\in\{0,1\}$, $n\ge 1$, is the lower mechanical word of slope
$c=1/\!\sqrt{2}$ and intercept $\sqrt{2}-1$,
\[
  w_{\mathrm{B}}(n)
  =\fl{nc+(\sqrt{2}-1)}-\fl{(n{-}1)c+(\sqrt{2}-1)},
  \qquad n\ge 1.
\]
The word is Sturmian and balanced, and every two factors of
the same length differ by at most one in the number
of~$1$'s.

The multiplicity of a value~$v$, that is the number of
indices~$k$ with $a_{\mathrm{B}}(k)=v$, equals
\[
  m(v)=\Bigl\lceil\tfrac{v+1-d}{c}\Bigr\rceil
       -\Bigl\lceil\tfrac{v-d}{c}\Bigr\rceil\in\{1,2\},
  \qquad d=\tfrac1{\sqrt2},
\]
and the sequence $(m(v))_{v\ge 1}$ is Sturmian. (The
difference $w_{\mathrm{B}}(k)=a_{\mathrm{B}}(k{+}1)-a_{\mathrm{B}}(k)$
records instead whether the positions $k,k{+}1$ carry the
same value, and does not by itself give the multiplicity of
a value.)

The characteristic word of slope~$c$ is the fixed point of
the primitive morphism
\[
  \sigma_{\mathrm{P}}\colon\ 1\mapsto 110,\qquad 0\mapsto 1,
\]
whose incidence matrix has characteristic polynomial
$\lambda^{2}-2\lambda-1$ and Perron eigenvalue $1+\sqrt{2}$,
and $w_{\mathrm{B}}$ is the shift of this fixed point by one
letter, since $w_{\mathrm{B}}(n)=t(n{+}1)$ for
$t(n)=\fl{(n{+}1)c}-\fl{nc}$. The word $(t(n))_{n\ge 1}$ is
\href{https://oeis.org/A080764}{\texttt{A080764}}.
The correspondence between the
eventually periodic continued fraction
$c=[0;1,2,2,2,\ldots]$ and morphism-invariant characteristic
words is classical, see Lothaire~\cite[Chapter~2]{Lothaire}.
The eigenvalue $1+\sqrt{2}$ is the silver ratio that also
governs the defect substitution of
Section~\ref{sec:defect}.

\subsection{Bifurcations}

The two solutions~$a_{\mathrm{G}}$
and~$a_{\mathrm{B}}$ first diverge at
$n=12$ (Section~\ref{sec:intro}), and their
difference
words diverge one step earlier, with
$w_{\mathrm{G}}(11)=0$ and $w_{\mathrm{B}}(11)=1$.
Figure~\ref{fig:oscillation} plots the normalized difference
$(a_{\mathrm{G}}(n)-a_{\mathrm{B}}(n))/n$, illustrating the
distinct asymptotic behavior of the two branches.
This is the smallest instance of a broader
pattern.  For $r=2$, the greedy solution~$a_{\mathrm{G}}$
is one of continuum many monotone solutions of~\eqref{eq:AG},
organized by the companion paper~\cite{Cloitre_tree} into a
binary branching structure.

More precisely, \cite{Cloitre_tree} constructs a family of
monotone solutions of~\eqref{eq:AG}
with $r=2$ in bijection with the Cantor
space~$\{0,1\}^{\mathbb{N}}$, the greedy solution
corresponding to~$0^{\infty}$ and the lazy solution
to~$1^{\infty}$, and characterizes the $2$-regular solutions
inside the tree as those with eventually constant choice
sequences. The Beatty solution~$a_{\mathrm{B}}$ studied here
does not belong to this tree family. It is characterized by
its Sturmian difference word rather than by a branching
pattern. Whether it is the only Sturmian
solution outside the tree is asked in~\cite{Cloitre_tree}.

\begin{figure}[!htbp]
\centering
\includegraphics[width=\textwidth]{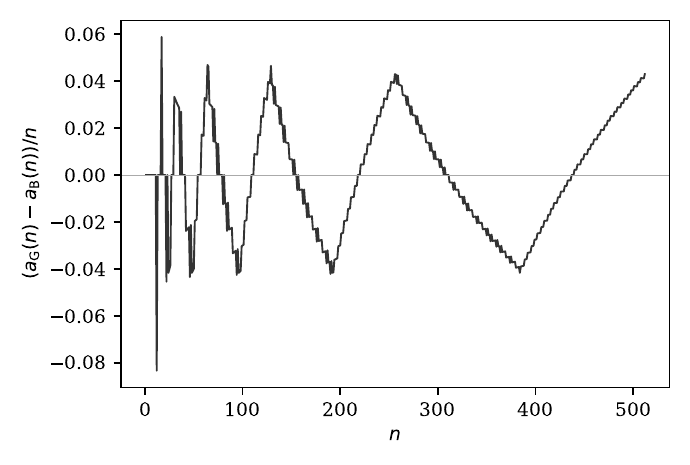}
\caption{The normalized difference
$(a_{\mathrm{G}}(n)-a_{\mathrm{B}}(n))/n$ for $1\le n\le 512$,
illustrating the distinct asymptotic behavior of the greedy and
Beatty branches.}
\label{fig:oscillation}
\end{figure}

\section{The defect at the right endpoint}
\label{sec:defect}

Throughout this section fix the right endpoint shift
$d_{\mathrm{R}}=2(\sqrt{2}-1)$ and write
$a_{\mathrm{R}}(n)=\fl{cn+d_{\mathrm{R}}}$ for the corresponding Beatty
sequence, whose window sum is
$S(n)=a_{\mathrm{R}}(n)+a_{\mathrm{R}}(n{-}1)$. It is distinct from the
canonical solution $a_{\mathrm{B}}(n)=\fl{(n{+}1)/\!\sqrt{2}}$
of Section~\ref{sec:structure}, which has $d=\sqrt{2}/2$ and
satisfies the strong equation with no defect.

At the right endpoint $d=2(\sqrt{2}{-}1)$ of the
interval~$I$, the triple-nested equation holds but
the strong equation $a_{\mathrm{R}}(S(n))=n$ fails
at a sparse set of positions which we call the
\emph{defect set}~$\mathcal{D}$. This section
describes the structure of~$\mathcal{D}$ in three
steps. First we characterize~$\mathcal{D}$ as the
set of return times of the Sturmian rotation
$\theta\mapsto\frc{\theta+c}$ to the interval
$[1-c,1/2]$ (Theorem~\ref{thm:absorption}),
which explains why every defect position is a
repeat of~$a_{\mathrm{R}}$, so that the extra
increment in $a_{\mathrm{R}}(S(n))$ is absorbed
by the outer~$a$.
Second, a direct first-return analysis, together with the
return-time form of the three-distance theorem, gives the
three gap values between consecutive defect
positions, namely~$3$, $4$, and~$7$. Third, the
gap sequence on the alphabet $\{3,4,7\}$ generates the
minimal subshift of an explicit primitive substitution with
Perron eigenvalue~$1+\sqrt{2}$
(Theorem~\ref{thm:defect_subst}), tying the defect structure
to the Pell recurrence.

For $d=2(\sqrt{2}{-}1)$, the triple-nested equation
$a_{\mathrm{R}}(a_{\mathrm{R}}(S(n)))=a_{\mathrm{R}}(n)$
holds by Theorem~\ref{thm:interval}, and expansion
of the defining formula shows
$a_{\mathrm{R}}(S(n))\in\{n,\,n{+}1\}$. The signed defect
of Section~\ref{ssec:composed},
\[
  \mu(n)=a_{\mathrm{R}}(S(n))-n,
\]
therefore takes only the values $0$ and $1$ here, and we write
$\mathcal{D}=\{n\ge 1:\mu(n)=1\}$ for the defect set.
The first elements of~$\mathcal{D}$
(\href{https://oeis.org/A395251}{\texttt{A395251}}) are
\[
  6,\,9,\,13,\,16,\,23,\,30,\,33,\,40,\,47,\,50,\,
  54,\,57,\,64,\,67,\,71,\,\ldots
\]

Write $\theta_{n}=\frc{cn+d}\in[0,1)$, so that
$a_{\mathrm{R}}(n)=cn+d-\theta_{n}$.

\subsection{Defect positions as return times}

The next theorem identifies the defect positions as the
return times of the Sturmian rotation to an explicit
interval, and shows that every defect position is a repeat
of~$a_{\mathrm{R}}$.

\begin{theorem}\label{thm:absorption}
For $d=2(\sqrt{2}{-}1)$ and $n\ge 2$:
\[
  \mu(n)=1\iff
  \theta_{n-1}\in[1-c,\,1/2].
\]
Consequently:
\begin{enumerate}
\item[\textup{(a)}] Every $n\in\mathcal{D}$ is a repeat
  of~$a_{\mathrm{R}}$, i.e.
  $a_{\mathrm{R}}(n{+}1)=a_{\mathrm{R}}(n)$.
\item[\textup{(b)}] The set~$\mathcal{D}$ has natural
  density
\[
  \rho(\mathcal{D})=c-\tfrac{1}{2}
  =\frac{\sqrt{2}-1}{2}\approx 0.2071.
\]
\end{enumerate}
\end{theorem}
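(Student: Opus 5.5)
We prove the equivalence by writing $\mu(n)$ explicitly as a function of the single phase $t=\theta_{n-1}$, and then read off (a) and (b) directly.

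\textbf{The main equivalence.} Keep the notation of Section~\ref{ssec:necessity}. The expansion there gives
\[
  a_{\mathrm{R}}(S(n))=\fl{n-\tfrac12+D-c\Theta_{2}(n)},
  \qquad \Theta_{2}(n)=\theta_{n}+\theta_{n-1}.
\]
At $d=d_{\mathrm{R}}$ the computation in Lemma~\ref{lem:upper} gives $D=2$. Hence
\[
  \mu(n)=\fl{\tfrac32-c\Theta_{2}(n)}.
\]
By Lemma~\ref{lem:absorption}, $\mu(n)\in\{0,1\}$, so $\mu(n)=1$ exactly when $c\Theta_{2}(n)\le\tfrac12$. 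Since $1/(2c)=c$, this is the condition $\Theta_{2}(n)\le c$.

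\textbf{Splitting on the position of $t$.} Set $t=\theta_{n-1}$. Then $\theta_{n}=\frc{t+c}$, and there are two cases.
\begin{itemize}
\item If $t\in[0,1-c)$, then $\theta_{n}=t+c$ and $\Theta_{2}=2t+c$. The condition $\Theta_{2}\le c$ forces $t=0$. This value is never attained for $n\ge2$: it would require $c(n-1)+d_{\mathrm{R}}\in\mathbb{Z}$, that is $(n+3)c-2\in\mathbb{Z}$, which is impossible by the irrationality argument already used in the Remark after Lemma~\ref{lem:upper}.
\item If $t\in[1-c,1)$, then $\theta_{n}=t+c-1$ and $\Theta_{2}=2t+c-1$. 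The condition $\Theta_{2}\le c$ becomes $t\le\tfrac12$.
\end{itemize}
Together these give $\mu(n)=1\iff\theta_{n-1}\in[1-c,\tfrac12]$. The interval is nonempty because $1-c\approx0.293<\tfrac12$.

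\textbf{Part (a).} If $\theta_{n-1}\in[1-c,\tfrac12]$, then
\[
  \theta_{n}=\theta_{n-1}+c-1\in\bigl[0,\,c-\tfrac12\bigr].
\]
Since $c-\tfrac12\approx0.207<1-c$, we get $\theta_{n}<1-c$, which is exactly the repeat condition. So every $n\in\mathcal{D}$ is a repeat.

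\textbf{Part (b).} The orbit $(\theta_{n-1})_{n\ge2}$ is equidistributed in $[0,1)$ by Weyl's theorem, and the endpoints of $[1-c,\tfrac12]$ have measure zero. The natural density of $\mathcal{D}$ is therefore the length of this interval, namely $\tfrac12-(1-c)=c-\tfrac12$.

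The only delicate point is the isolated boundary value $t=0$ in the first case, which is settled by the irrationality of $c$.
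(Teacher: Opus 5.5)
Your proof is correct and follows essentially the same route as the paper: set $D=2$ to reduce $\mu(n)=1$ to $\theta_{n}+\theta_{n-1}\le c$, split on whether $\theta_{n-1}<1-c$, exclude $\theta_{n-1}=0$ by the irrationality of $c$, and read off (a) and (b) from the translated interval and Weyl equidistribution. The only difference is that you cite Lemma~\ref{lem:absorption} to get $\mu(n)\in\{0,1\}$, whereas the paper reads the condition directly off the floor. This is harmless, and in fact unnecessary, since $0\le c\Theta_{2}(n)<2c<\tfrac32$ already forces $\fl{\tfrac32-c\Theta_{2}(n)}\in\{0,1\}$.
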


\begin{proof}
Using $a_{\mathrm{R}}(n)=cn+d-\theta_{n}$ and
$a_{\mathrm{R}}(n{-}1)=c(n{-}1)+d-\theta_{n-1}$:
\[
  S(n)=a_{\mathrm{R}}(n)+a_{\mathrm{R}}(n{-}1)
  =c(2n{-}1)+2d-\theta_{n}-\theta_{n-1}.
\]
Since $c^{2}=1/2$:
\[
  c\,S(n)+d=n-\tfrac{1}{2}+d(2c{+}1)
  -c(\theta_{n}+\theta_{n-1}).
\]
At $d=2(\sqrt{2}{-}1)=4c-2$,
\[
  d(2c{+}1)=(4c-2)(2c{+}1)
  =8c^{2}-2=2,
\]
so
\begin{equation}\label{eq:cSn_formula}
  c\,S(n)+d=n+\tfrac{3}{2}-c(\theta_{n}+\theta_{n-1}).
\end{equation}
Applying the floor, $a_{\mathrm{R}}(S(n))=n+1$
precisely when $n{+}1\le c\,S(n)+d<n+2$, which using
\eqref{eq:cSn_formula} rearranges to
\[
  -\tfrac{1}{2}\le -c(\theta_{n}+\theta_{n-1})
  <\tfrac{1}{2},
\]
equivalently $\theta_{n}+\theta_{n-1}\le 1/(2c)=c$
(the lower bound is automatic).

Now $\theta_{n}=\frc{\theta_{n-1}+c}$. Since $c<1$, two
cases arise.

If $\theta_{n-1}<1-c$,
then $\theta_{n}=\theta_{n-1}+c$, and
\[
  \theta_{n}+\theta_{n-1}=2\theta_{n-1}+c\le c
\]
forces $\theta_{n-1}=0$. This cannot occur. With $d=4c-2$,
$\theta_{n-1}=0$ would give $c(n-1)+d=c(n+3)-2\in\mathbb{Z}$,
impossible for $n\ge 2$ since $n+3\ne 0$ and $c\notin\mathbb{Q}$.

If $\theta_{n-1}\ge 1-c$,
then $\theta_{n}=\theta_{n-1}+c-1$, and
\[
  \theta_{n}+\theta_{n-1}=2\theta_{n-1}+c-1\le c
\]
becomes $\theta_{n-1}\le 1/2$, yielding
$\theta_{n-1}\in[1-c,1/2]$.

This interval is non-empty since $1/2>1-c$, with
length $1/2-(1-c)=c-1/2=(\sqrt{2}{-}1)/2$. The
characterization is established.

For \textup{(a)}, in the effective case
$\theta_{n}=\theta_{n-1}+c-1\in[0,c-1/2]$.
Since $c-1/2<1-c$, we have $\theta_{n}<1-c$, so
$\theta_{n}+c<1$ and
$a_{\mathrm{R}}(n{+}1)-a_{\mathrm{R}}(n)
=\fl{\theta_{n}+c}=0$.

For \textup{(b)}, the sequence
$(\theta_{n})_{n\ge 0}$ is the orbit of the
irrational rotation $\theta\mapsto\frc{\theta+c}$,
uniformly equidistributed in $[0,1)$ by Weyl's
theorem. The set of $n$ with
$\theta_{n-1}\in[1-c,1/2]$ therefore has natural
density equal to the length of that interval.
\end{proof}

The absorption now reads explicitly. At a
defect position, the point~$\theta_{n}$
lies in the low interval $[0,c-1/2]$, forcing
$a_{\mathrm{R}}(n{+}1)=a_{\mathrm{R}}(n)$. The
equation~\eqref{eq:main} then survives because the
extra~$+1$ in $a_{\mathrm{R}}(S(n))$ is absorbed by
the outer application of~$a_{\mathrm{R}}$,
\[
  a_{\mathrm{R}}(a_{\mathrm{R}}(S(n)))
  =a_{\mathrm{R}}(n{+}1)=a_{\mathrm{R}}(n).
\]

\subsection{The gap sequence}

The characterization
$\mathcal{D}=\{n\ge 2:\theta_{n-1}\in[1-c,1/2]\}$
expresses $\mathcal{D}$ as the return-time set of
the rotation $\theta\mapsto\frc{\theta+c}$ to the
interval $[1-c,1/2]$. By a standard return-time form of the
three-distance theorem~\cite{Sos} (see Alessandri and
Berth\'e~\cite{AlessandriBerthe} for a modern
exposition within combinatorics on words),
the first-return time to the interval takes at most three
values. The
return-time analysis of
Proposition~\ref{prop:defect_substitution} below
identifies these values as $3$, $4$, and~$7$, and
computes the corresponding letter frequencies.

\begin{proposition}\label{prop:defect_substitution}
The gap sequence of~$\mathcal{D}$ on the alphabet
$\{3,4,7\}$ has letter frequencies
\[
  f_{3}=\sqrt{2}-1,\quad f_{4}=3-2\sqrt{2},\quad f_{7}=\sqrt{2}-1,
\]
matching Theorem~\ref{thm:absorption}(b), and the
average gap is $2(\sqrt{2}+1)$. The gap sequence
of~$\mathcal{D}$ is
\href{https://oeis.org/A395253}{\texttt{A395253}}.
\end{proposition}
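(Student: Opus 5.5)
By Theorem~\ref{thm:absorption}, $\mathcal{D}$ is the set of $n\ge 2$ with $\varphi_{n}:=\theta_{n-1}\in J:=[1-c,\,1/2]$, where $\varphi_{n+1}=\frc{\varphi_{n}+c}$. The plan is to compute the first-return map $T_{J}$ of the rotation $R\colon x\mapsto\frc{x+c}$ to~$J$ exactly. From it we read off the gap values, the subintervals on which each gap occurs, and their lengths. Equidistribution then turns these lengths into frequencies.

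\emph{Step 1 (return times).} For $x\in J$ we follow $x+kc \bmod 1$ for $k=1,\dots,7$, using $c\approx0.7071$ and $|J|=c-\tfrac12\approx0.2071$. Each iterate $R^{k}x$ is an affine function $x+kc-p$ on~$J$, with a known integer~$p$ away from the finitely many breakpoints. We solve $R^{k}x\in J$ for $k\le 7$ and keep the least such~$k$. The candidates $kc-p\in(-|J|,|J|)$ are the good approximations $3c-2\approx0.1213$ and $4c-3\approx-0.1716$. Their sum $7c-5\approx-0.0503$ also qualifies. This gives the partition of~$J$ into
\[
  J_{3}=\bigl[1-c,\ \tfrac12-(3c-2)\bigr]=[1-c,\ \tfrac52-3c],\qquad
  J_{4}=[\,4-4c,\ \tfrac12\,],
\]
with the remaining middle piece $J_{7}=(\tfrac52-3c,\ 4-4c)$. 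On $J_{7}$ neither $x+3c-2$ nor $x+4c-3$ lies in~$J$, while $x+7c-5$ does. We will check that no $k\in\{1,2,5,6\}$ gives an earlier return, since $kc$ is too far from an integer. We will also check that for $k=7$ the translate lands inside~$J$. Both checks are exact comparisons in $\mathbb{Q}(\sqrt2)$. This is consistent with the three-distance theorem, and $7=3+4$ as expected.

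\emph{Step 2 (lengths).} We compute $|J_{3}|=\tfrac52-3c-(1-c)=\tfrac32-2c=\tfrac{3-2\sqrt2}{2}$ and $|J_{4}|=\tfrac12-4+4c=4c-\tfrac72$. Then $|J_{7}|=|J|-|J_{3}|-|J_{4}|$. These come out in the ratio $(\sqrt2-1):(3-2\sqrt2):(\sqrt2-1)$ after normalising by $|J|$. Concretely, we verify $|J_{3}|/|J|=\tfrac{3-2\sqrt2}{\sqrt2-1}=\sqrt2-1$, and similarly for the other two pieces. The endpoint assignments between $J_{3}$, $J_{4}$ and $J_{7}$ are harmless, because the orbit is injective and hits each boundary point at most once.

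\emph{Step 3 (frequencies and mean).} By Weyl equidistribution of $(\varphi_{n})$, the proportion of visits to $J$ that fall in $J_{g}$ tends to $|J_{g}|/|J|$. This gives $f_{3},f_{4},f_{7}$, and $f_{3}+f_{4}+f_{7}=1$ serves as a check. The mean gap is $3f_{3}+4f_{4}+7f_{7}=10(\sqrt2-1)+4(3-2\sqrt2)=2+2\sqrt2$. This equals $1/\rho(\mathcal{D})=2/(\sqrt2-1)$, which matches Theorem~\ref{thm:absorption}(b) by Kac's lemma.

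The main obstacle is Step~1. We must certify that the minimal return time on the middle piece is exactly~$7$, not some smaller~$k$. We must also get the cut points right, and it is easy to misplace which side of $J$ a translate lands on. I would handle this by tabulating $\frc{kc}$ for $k\le7$ and doing each inequality exactly in $\mathbb{Q}(\sqrt2)$, after first sanity-checking the result against the listed initial gaps $3,4,3,7,7,3,\dots$ of $\mathcal{D}$.
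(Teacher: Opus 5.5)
Your route is the paper's: split $J=[1-c,\tfrac12]$ by first-return time into three intervals, compute their lengths in $\mathbb{Q}(\sqrt2)$, and turn lengths into frequencies by Weyl equidistribution. Your exclusion of the return times $k\in\{1,2,5,6\}$ is a slightly cleaner substitute for the paper's explicit location of $R_c^{5}\theta$ and $R_c^{6}\theta$ on $K_7$: for these $k$ the distance from $kc$ to the nearest integer exceeds $|J|$, so no point of $J$ can return then. However, Step 1 contains a concrete error, and it breaks Step 2 as written.

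A point $x\in J$ returns at time $4$ exactly when $x+4c-3\in[1-c,\tfrac12]$, that is, $x\in[4-5c,\,\tfrac72-4c]$. Since $\tfrac72-4c>\tfrac12$, intersecting with $J$ gives $J_4=[4-5c,\,\tfrac12]$, not $[4-4c,\,\tfrac12]$. With your endpoint the computation collapses:
\begin{itemize}
\item $4-4c\approx 1.17>\tfrac12$, so your $J_4$ is empty;
\item your length $|J_4|=4c-\tfrac72\approx-0.67$ is negative;
\item the remainder $|J|-|J_3|-|J_4|$ equals $\tfrac32-c>|J|$.
\end{itemize}
So the ratio $(\sqrt2-1):(3-2\sqrt2):(\sqrt2-1)$ does not follow from the lengths you wrote. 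The phrase ``similarly for the other two pieces'' covers exactly the step that fails.

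The fix is immediate. Take $J_7=(\tfrac52-3c,\,4-5c)$. Then $|J_4|=5c-\tfrac72=\tfrac{5\sqrt2-7}{2}$ and $|J_7|=\tfrac32-2c=|J_3|$, and $|J_4|/|J|=(5\sqrt2-7)(\sqrt2+1)=3-2\sqrt2$. On the corrected $J_7$ one checks three facts: $x+3c-2>\tfrac12$, $x+4c-3<1-c$, and $x+7c-5\in(4c-\tfrac52,\,2c-1)\subset J$. Hence the first return there is exactly $7$. With these values your Step 3 goes through unchanged, including the mean gap $2(\sqrt2+1)=1/\rho(\mathcal{D})$.
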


\begin{proof}
The defect set is
$\mathcal{D}=\{n\ge 2:\theta_{n-1}\in J\}$ with
$J=[1-c,\,1/2]$, so $|J|=c-1/2=(\sqrt{2}-1)/2$.
For each return time $i\ge 1$, let
\[
  K_{i}=\{\theta\in J: R_{c}^{i}\theta\in J
  \text{ and } R_{c}^{k}\theta\notin J
  \text{ for }1\le k<i\},
\]
where $R_{c}\colon\theta\mapsto\frc{\theta+c}$.
We compute the sub-intervals directly from the
iterates. For $\theta\in J=[1-c,1/2]$, a direct reduction
modulo~$1$ gives
\[
  R_{c}\theta=\theta+c-1,\qquad
  R_{c}^{2}\theta=\theta+2c-1,\qquad
  R_{c}^{3}\theta=\theta+3c-2
\]
for $\theta\in J$. Neither $R_{c}\theta\in[0,c-1/2]$
nor $R_{c}^{2}\theta\in[c,2c-1/2]$ meets~$J$, so
every return time is at least~$3$.

Return time~$3$ occurs when
$\theta+3c-2\in[1-c,1/2]$, equivalently
$\theta\in[3-4c,\,5/2-3c]$. Since $1-c>3-4c$
and $5/2-3c<1/2$, intersection with~$J$ gives
\[
  K_{3}=[1-c,\,5/2-3c],\qquad
  |K_{3}|=(5/2-3c)-(1-c)=3/2-2c=\tfrac{3-2\sqrt{2}}{2}.
\]

For $\theta\in J\setminus K_{3}=(5/2-3c,\,1/2]$, the
return time exceeds~$3$. The next iterate
$R_{c}^{4}\theta=\theta+4c-3$ lies in~$J$ when
$\theta\in[4-5c,\,7/2-4c]$. Intersection with
$(5/2-3c,\,1/2]$ gives
\[
  K_{4}=[4-5c,\,1/2],\qquad
  |K_{4}|=1/2-(4-5c)=5c-7/2=\tfrac{5\sqrt{2}-7}{2}.
\]

The remainder
$K_{7}=(5/2-3c,\,4-5c)=J\setminus(K_{3}\sqcup K_{4})$
has length
\[
  |K_{7}|=(4-5c)-(5/2-3c)=3/2-2c=\tfrac{3-2\sqrt{2}}{2},
\]
For $\theta\in K_{7}$, the fifth and sixth iterates satisfy
\[
  R_{c}^{5}\theta=\theta+5c-3\in\bigl(2c-\tfrac12,\,1\bigr),
  \qquad
  R_{c}^{6}\theta=\theta+6c-4\in\bigl(3c-\tfrac32,\,c\bigr),
\]
both disjoint from $J=[1-c,1/2]$, since $3c-\tfrac32>\tfrac12$.
On the other hand,
\[
  R_{c}^{7}\theta=\theta+7c-5\in\bigl(4c-\tfrac52,\,2c-1\bigr)
  \subset J,
\]
so the first-return time is $7$ throughout $K_{7}$, and
$J=K_{3}\sqcup K_{7}\sqcup K_{4}$ is the exact first-return
partition.

The orbit $(\theta_{n-1})_{n\ge 2}$ equidistributes
in $[0,1)$ by Weyl's theorem, so each $K_{i}$ is
visited with density~$|K_{i}|$, and the frequency
of gap value~$i$ among defect positions is
\[
  f_{i}=\frac{|K_{i}|}{|J|}.
\]
Substituting the lengths and using
$(\sqrt{2}-1)(\sqrt{2}+1)=1$ to rationalize yields
\[
  f_{3}=f_{7}=\sqrt{2}-1,\qquad
  f_{4}=3-2\sqrt{2}.
\]
The average gap is
\[
  3f_{3}+4f_{4}+7f_{7}
  =3(\sqrt{2}{-}1)+4(3{-}2\sqrt{2})+7(\sqrt{2}{-}1)
  =2\sqrt{2}+2
  =2(\sqrt{2}{+}1),
\]
which equals the reciprocal of the density
$\rho(\mathcal{D})=(\sqrt{2}-1)/2$ established in
Theorem~\ref{thm:absorption}(b).
\end{proof}

We now come to the substitution structure of the gap
sequence. Let $T\colon J\to J$ denote the first-return map
of $R_{c}$ to $J$. On the partition of
Proposition~\ref{prop:defect_substitution}, $T$ acts by
\[
  T\theta=\theta+3c-2 \text{ on } K_{3},\qquad
  T\theta=\theta+4c-3 \text{ on } K_{4},\qquad
  T\theta=\theta+7c-5 \text{ on } K_{7}.
\]
For $\theta\in J$, the itinerary
$C(\theta)=(\iota(T^{k}\theta))_{k\ge 0}$, where
$\iota(\theta')=i$ when $\theta'\in K_{i}$, records the
successive gap values along the orbit. The gap sequence
of~$\mathcal{D}$ is the itinerary of the first orbit point
in~$J$.

\begin{lemma}\label{lem:selfind}
Let $\eta=\sqrt{2}-1=\tau^{-1}$, and define the
orientation-reversing similarity
\[
  \psi\colon J\to K_{3},\qquad
  \psi(\theta)=\tfrac{5-3\sqrt{2}}{2}
  -\eta\bigl(\theta-(1-c)\bigr).
\]
Set $X_{1}=\tfrac{19-13\sqrt{2}}{2}$ and
$X_{2}=6-4\sqrt{2}$. Then the following hold.
\begin{enumerate}
\item[\textup{(a)}] $\psi(K_{4})=[1{-}c,\,X_{1}]$,
$\psi(K_{7})=(X_{1},\,X_{2})$, and
$\psi(K_{3})=[X_{2},\,\tfrac{5-3\sqrt{2}}{2}]$.
\item[\textup{(b)}] The points of
$[X_{2},\,\tfrac{5-3\sqrt{2}}{2}]$ return to $K_{3}$ after
the itinerary $3,4$, the points of $(X_{1},\,X_{2})$ after
the itinerary $3,7,7$, and the points of $[1{-}c,\,X_{1}]$
after the itinerary $3,7$.
\item[\textup{(c)}] For every $\theta\in K_{i}$,
\[
  \psi(T\theta)=T^{\rho(i)}\bigl(\psi(\theta)\bigr),
  \qquad \rho(3)=2,\quad\rho(4)=2,\quad\rho(7)=3.
\]
\end{enumerate}
\end{lemma}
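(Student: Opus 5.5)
The plan is a direct computation in $\mathbb{Q}(\sqrt{2})$, using only the explicit partition $J=K_{3}\sqcup K_{7}\sqcup K_{4}$ and the translation formulas for $T$ from Proposition~\ref{prop:defect_substitution}. Throughout, $c=\sqrt{2}/2$, $\eta=\sqrt{2}-1$, and the endpoints are $1-c$, $5/2-3c$, $4-5c$, $1/2$.

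\emph{Part (a).} Since $\psi$ is affine with slope $-\eta$, it suffices to evaluate it at these four endpoints.
\begin{itemize}
\item At $1-c$ it gives $\tfrac{5-3\sqrt{2}}{2}=5/2-3c$, the right end of $K_{3}$.
\item At $1/2$ it gives $\tfrac{5-3\sqrt{2}}{2}-\eta(c-\tfrac12)$. Expanding $\eta(c-\tfrac12)=(\sqrt2-1)(\sqrt2-1)/2=(3-2\sqrt2)/2$ yields $1-\tfrac{\sqrt2}{2}=1-c$, so $\psi(J)=K_{3}$.
\item At $5/2-3c$ and $4-5c$, the same arithmetic should give $X_{2}$ and $X_{1}$ respectively.
\end{itemize}
Because $\psi$ reverses orientation, these values place $\psi(K_{3})$ on the right, $\psi(K_{7})$ in the middle and $\psi(K_{4})$ on the left. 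The open/closed status of each endpoint is carried over directly.

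\emph{Part (b).} For each of the three subintervals of $K_{3}$, I follow $T$ step by step.
\begin{itemize}
\item Applying $T|_{K_{3}}$ translates by $3c-2$. I check which $K_{i}$ the image lies in.
\item I apply the corresponding translation and repeat until the image is back in $K_{3}$.
\item For example, $[X_{2},\tfrac{5-3\sqrt2}{2}]+3c-2$ should land inside $K_{4}$. Adding $4c-3$ then returns the total translation $7c-5$, and I check that this lands in $K_{3}$.
\item Similarly, $3,7$ has total translation $10c-7$, and $3,7,7$ has total translation $17c-12$.
\end{itemize}
Each step is a containment between intervals whose endpoints lie in $\mathbb{Z}[\sqrt2]/2$. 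I decide each one by a sign check on $a+b\sqrt2$ via squaring. The intermediate intervals must also avoid $K_{3}$ before the final step; this avoidance is part of what "return to $K_{3}$ after the itinerary" means.

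\emph{Part (c).} This follows from (a) and (b) together with affineness. Take $\theta\in K_{i}$.
\begin{itemize}
\item By (b), $T^{\rho(i)}$ acts on $\psi(K_{i})$ as a single translation: by $7c-5$ for $i=3$, by $10c-7$ for $i=4$, and by $17c-12$ for $i=7$.
\item $\psi(T\theta)=\psi(\theta)-\eta\,t_{i}$, where $t_{i}\in\{3c-2,\,4c-3,\,7c-5\}$ is the translation of $T$ on $K_{i}$.
\item So it suffices to verify $-\eta t_{i}$ equals the corresponding total translation. For $i=3$: $-(\sqrt2-1)(3\sqrt2/2-2)=-(3-2\sqrt2-3\sqrt2/2+2)$, which should simplify to $7c-5$.
\item The other two cases are analogous; with $c=\sqrt2/2$, a product with $\eta$ shifts the coefficients by the silver-ratio recursion.
\end{itemize}
I also need $T\theta\in J$ so that $\psi(T\theta)$ is defined; this holds by construction.

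\emph{Main obstacle.} The main difficulty is the bookkeeping in (b): verifying that intermediate iterates miss $K_{3}$ and land in the claimed $K_{i}$, including the endpoint conventions at $X_{1}$ and $X_{2}$. I would organize it as a small table of interval endpoints in the form $a+b\sqrt2$, and compare endpoints by exact sign tests.
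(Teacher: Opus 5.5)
Your proposal is correct and follows essentially the same route as the paper. For (a) you evaluate the affine map $\psi$ at the four partition endpoints and use orientation reversal. For (b) you track the images of the three pieces of $K_{3}$ through successive translations. For (c) you match $-\eta t_{i}$ against the total translations $7c-5$, $10c-7$ and $17c-12$. The containments you identify as the main bookkeeping all check out exactly in $\mathbb{Q}(\sqrt{2})$, including the endpoint conventions at $X_{1}$ and $X_{2}$. For example, $T\bigl([X_{2},\tfrac{5-3\sqrt{2}}{2}]\bigr)$ is exactly $K_{4}$, and every intermediate image lies in $K_{4}$ or $K_{7}$.
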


\begin{proof}
(a) The map $\psi$ is affine, so it suffices to evaluate it
at the endpoints $1-c$, $\tfrac{5-3\sqrt{2}}{2}$, $4-5c$,
and $\tfrac{1}{2}$ of the partition, using $c=\sqrt{2}/2$.
The computations
$\psi(\tfrac12)=1-c$, $\psi(4-5c)=X_{1}$,
$\psi(\tfrac{5-3\sqrt{2}}{2})=X_{2}$, and
$\psi(1-c)=\tfrac{5-3\sqrt{2}}{2}$
are direct, and $\psi$ reverses orientation.

(b) We follow the orbit of each piece. First,
$T\bigl([X_{2},\tfrac{5-3\sqrt{2}}{2}]\bigr)
=[4-5c,\tfrac12]=K_{4}$, since
$X_{2}+3c-2=4-5c$ and $\tfrac{5-3\sqrt{2}}{2}+3c-2=\tfrac12$,
and $T(K_{4})=[1-c,\,4c-\tfrac52]\subset K_{3}$ as in the
proof of Proposition~\ref{prop:defect_substitution}. This
gives the itinerary $3,4$ with return after two steps.
Second,
$T\bigl([1{-}c,X_{1}]\bigr)=[2c-1,\,\tfrac{15}{2}-5\sqrt{2}]
\subset K_{7}$ and one further step lands in
$[9c-6,\,\tfrac{5-3\sqrt{2}}{2}]\subset K_{3}$, giving the
itinerary $3,7$. Third,
$T\bigl((X_{1},X_{2})\bigr)
=(\tfrac{15}{2}-5\sqrt{2},\,4-5c)\subset K_{7}$, the next
step gives $(\tfrac{5-3\sqrt{2}}{2},\,2c-1)\subset K_{7}$,
and the third step gives
$(4c-\tfrac52,\,9c-6)\subset K_{3}$, giving the itinerary
$3,7,7$. All endpoint computations take place in
$\mathbb{Q}(\sqrt{2})$.

(c) On each piece, both sides of the identity are
translations of $\psi(\theta)$, so it suffices to compare
the translation amounts. For $\theta\in K_{3}$, the left
side translates $\psi(\theta)$ by $-\eta(3c-2)$ and the
right side by $(3c-2)+(4c-3)=7c-5$, and
$(\sqrt{2}-1)(3c-2)=5-7c$. For $\theta\in K_{7}$, the
amounts are $-\eta(7c-5)$ and $(3c-2)+2(7c-5)=17c-12$,
and $(\sqrt{2}-1)(7c-5)=12-17c$. For $\theta\in K_{4}$, the
amounts are $-\eta(4c-3)$ and $(3c-2)+(7c-5)=10c-7$, and
$(\sqrt{2}-1)(4c-3)=7-10c$. Each of the three products is
verified by expanding with $c=\sqrt{2}/2$.
\end{proof}

The self-induction of the return map is realized by an
explicit substitution on the three gap values.

\begin{theorem}\label{thm:defect_subst}
Define the substitution
\begin{equation}\label{eq:sigma}
  \sigma\colon\quad
  3\mapsto 3\,4,\qquad
  4\mapsto 3\,7,\qquad
  7\mapsto 3\,7\,7,
\end{equation}
with incidence matrix
\[
  M=\begin{pmatrix}
  1 & 1 & 1\\
  1 & 0 & 0\\
  0 & 1 & 2
  \end{pmatrix},
\]
characteristic polynomial
$\lambda^{3}-3\lambda^{2}+\lambda+1
=(\lambda-1)(\lambda^{2}-2\lambda-1)$, and Perron eigenvalue
$1+\sqrt{2}$. Every entry of $M^{2}$ is positive, so
$\sigma$ is primitive. The gap sequence of~$\mathcal{D}$
belongs to the minimal subshift $X_{\sigma}$ generated
by~$\sigma$, and its letter frequencies
$(\sqrt{2}-1,\,3-2\sqrt{2},\,\sqrt{2}-1)$ equal the
normalized Perron eigenvector of~$M$.
\end{theorem}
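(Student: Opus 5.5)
The plan has four parts, taken in order: the algebraic facts about $\sigma$, the self-similarity of $\psi(J)=K_{3}$ via Lemma~\ref{lem:selfind}, the iteration of that self-similarity down to arbitrarily fine cylinders, and finally the frequencies.

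\emph{Algebraic facts.} I first check the claims about $\sigma$ itself. The incidence matrix $M$ has $(i,j)$ entry equal to the number of occurrences of letter $i$ in $\sigma(j)$; this can be read off directly. Expanding $\det(\lambda I-M)$ gives $(\lambda-1)(\lambda^{2}-2\lambda-1)$, so the Perron root is $1+\sqrt{2}$. Squaring $M$ by hand shows all entries of $M^{2}$ are positive, hence $\sigma$ is primitive. By standard theory (Queffélec), the subshift $X_{\sigma}$ is minimal and uniquely ergodic, and its letter frequencies form the normalized right Perron eigenvector. Solving $(M-\tau I)v=0$ with $\tau=1+\sqrt2$ gives $v\propto(\tau,1,\tau)$, i.e.\ $(\sqrt2-1,\,3-2\sqrt2,\,\sqrt2-1)$ after normalizing. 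This matches Proposition~\ref{prop:defect_substitution}.

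\emph{Self-similarity.} The core is a renormalization argument based on Lemma~\ref{lem:selfind}. By (b) and (c), conjugating $T$ through $\psi$ realizes $\sigma$. For $\theta\in K_{i}$, the orbit of $\psi(\theta)\in K_{3}$ under $T$, up to its next return to $\psi(J)=K_{3}$, has itinerary exactly $\sigma(i)$. This uses (a): $\psi(K_{3})$ has itinerary $3,4$, $\psi(K_{4})$ has $3,7$, and $\psi(K_{7})$ has $3,7,7$. The return then lands at $\psi(T\theta)$ by (c). Inducting along the orbit gives
\[
C(\psi(\theta))=\sigma\bigl(C(\theta)\bigr).
\]
One point needs verifying: the intermediate steps counted by $\rho(i)$ really avoid $K_{3}$. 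This is the content of (b), since the intermediate letters are $4$ or $7$.

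\emph{Factors of $C(\theta)$.} Iterating the identity, for every $k$ and every $\theta\in J$,
\[
C(\psi^{k}(\theta))=\sigma^{k}(C(\theta)).
\]
The sets $\psi^{k}(K_{i})$ are intervals of length $\eta^{k}|K_{i}|$. Together with their images under $T^{j}$ for $j<|\sigma^{k}(i)|$, they tile $J$. This is a Kakutani–Rokhlin tower partition, exhausting $J$ by a length count using the Perron eigenvector. Hence every $\theta\in J$ lies in some level of some tower. The first $|\sigma^{k}(i)|-j$ letters of $C(\theta)$ are then a suffix of $\sigma^{k}(i)$, followed by a concatenation of blocks $\sigma^{k}(\cdot)$. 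Consequently every factor of $C(\theta)$ of length at most roughly $\min_{i}|\sigma^{k}(i)|$ is a factor of $\sigma^{k}(ab)$ for some two-letter word $ab$ occurring in $C(\theta')$. The admissible two-letter words themselves lie in the language of $\sigma$: I will check this by listing the pairs $T(K_{i})\cap K_{j}\neq\emptyset$ using the explicit translations, and confirming each pair $ij$ occurs in $\sigma^{2}(3)$. It follows that every factor of the gap sequence lies in $\mathcal{L}(\sigma)$, so the gap sequence belongs to $X_{\sigma}$.

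\emph{Main obstacle and frequencies.} The hardest step is the tiling claim: that the towers over $\psi^{k}(K_{i})$ exactly partition $J$. The lengths should sum to $|J|$ by the eigenvector identity $\eta\,M^{\!\top}$-compatibility of lengths, since $(|K_{3}|,|K_{4}|,|K_{7}|)$ is a left eigenvector scaled by $\eta$. Disjointness follows from injectivity of $T$ and the first-return property; I will verify the case $k=1$ directly from (b), then induct. Finally, equality of the empirical frequencies with the Perron data is already given by Proposition~\ref{prop:defect_substitution}, which completes the proof.
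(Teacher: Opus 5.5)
Your proof is correct in outline and rests on the same self-induction $C(\psi\theta)=\sigma(C(\theta))$ as the paper. The desubstitution, however, is organized differently.

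\emph{The paper's route.} The paper removes one level at a time and inducts on factor length: a factor of length $\ell\ge 4$ of $\sigma(W)$ lies in $\sigma(F')$ with $|F'|\le\lceil\ell/2\rceil+1<\ell$. This reduction stalls at $\ell=3$, so the paper has to determine all factors of length three. That requires the extra transition facts that $43$ is always followed by $7$ and $77$ by $3$, and a check of seven words in $\sigma^{4}(3)$.

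\emph{Your route.} You remove $k$ levels at once. If $\theta=T^{j}\psi^{k}\theta'$, then $C(\theta)$ is a suffix of $\sigma^{k}(C(\theta'))$. Since $|\sigma^{k}(i)|\ge 2^{k}$, every factor of length at most $2^{k}$ lies in $\sigma^{k}(ab)$ for some two-letter factor $ab$. Only the five words $34,37,43,73,77$ then need checking. The price is that you need the representation $\theta=T^{j}\psi^{k}\theta'$ at every level~$k$, whereas the paper uses it only at level one.

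Two points need repair.

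\emph{The two-letter check.} $\sigma^{2}(3)=3437$ contains $34,43,37$ but neither $73$ nor $77$. Check instead in $\sigma^{3}(3)=343734377$.

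\emph{Exact coverage.} A length count only shows that disjoint towers cover $J$ up to finitely many points. The gap sequence, however, is the itinerary of one specific point. Since the first defect is $n=6$, that point is $\theta_{5}=9c-6=T^{2}(1-c)$. It lies on the orbit of the endpoint $1-c$ of~$J$, which sits on tower boundaries: $1-c$ is an endpoint of $\psi(K_{4})$, and $T^{2}\psi^{2}(1-c)=1-c$. This is exactly the kind of point a measure count does not control.

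Your tiling is in fact true. It follows by induction from level one and Lemma~\ref{lem:selfind}(c). But it is more than you need, and a weaker statement suffices. The endpoint computations of Lemma~\ref{lem:selfind}(b) give exactly
\[
  J\setminus K_{3}=T(K_{3})\sqcup T^{2}\bigl(\psi(K_{7})\bigr),
\]
so every $\theta\in J$ equals $T^{j}\psi(\theta_{1})$ with $j\le 2$ and $\theta_{1}\in J$. Repeat this for $\theta_{1}$, and move $T$ through $\psi$ using Lemma~\ref{lem:selfind}(c). By induction this gives $\theta=T^{j}\psi^{k}\theta'$ for every~$k$, which is all your factor argument uses. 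It is also the fact the paper invokes when it goes back at most two steps to reach $K_{3}$.

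The algebraic facts and the frequency part are fine. Appealing to unique ergodicity of $X_{\sigma}$ is a valid alternative to quoting Proposition~\ref{prop:defect_substitution}. One terminology slip: the length vector $(|K_{3}|,|K_{4}|,|K_{7}|)\propto(\tau,1,\tau)$ is a \emph{right} Perron eigenvector of~$M$, not a left one.
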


\begin{proof}
Lemma~\ref{lem:selfind} gives, for every $\theta\in J$,
\begin{equation}\label{eq:conj_coding}
  C(\psi\theta)=\sigma\bigl(C(\theta)\bigr).
\end{equation}
Indeed, parts (a) and (b) show that $\psi\theta$ begins the
itinerary $\sigma(\iota(\theta))$, and part (c) shows that
after this itinerary the orbit of $\psi\theta$ stands at
$\psi(T\theta)$, so induction on the returns
proves~\eqref{eq:conj_coding}.

Since $X_{\sigma}$ is the set of infinite words all of whose
factors are factors of the fixed point
$\sigma^{\infty}(3)$, it suffices to prove that every factor
of every itinerary is a factor of $\sigma^{\infty}(3)$.

The map $T$ permutes the pieces as follows. From the proof
of Proposition~\ref{prop:defect_substitution},
$T(K_{3})=[2c-1,\tfrac12]$ meets $K_{7}$ and $K_{4}$ but not
$K_{3}$, $T(K_{4})\subset K_{3}$, and
$T(K_{7})=(4c-\tfrac52,\,2c-1)$ meets $K_{3}$ and $K_{7}$
but not $K_{4}$. Moreover
$T\bigl(T(K_{4})\bigr)\subset K_{7}$, since
$T(K_{4})=[1-c,\,4c-\tfrac52]$ translates by $3c-2$ into
$[2c-1,\,7c-\tfrac92]\subset K_{7}$, and
$T\bigl(T(K_{7})\cap K_{7}\bigr)\subset K_{3}$, since
$(\tfrac{5-3\sqrt{2}}{2},\,2c-1)$ translates by $7c-5$ into
$(4c-\tfrac52,\,9c-6)\subset K_{3}$. Consequently the
factors of length two of any itinerary lie in
$\{34,37,43,73,77\}$, the word $43$ is always followed
by~$7$, and the word $77$ is always followed by~$3$. The
factors of length three therefore lie in
$\{343,373,377,437,734,737,773\}$, and each of these words
is a factor of $\sigma^{4}(3)$, by inspection.

Now let $F$ be a factor of an itinerary $C(\theta)$, of
length $\ell\ge 4$. Shifting $\theta$ along its orbit, $F$
is a factor of $C(\theta_{0})$ for a point $\theta_{0}$
whose backward orbit meets $K_{3}$ within two steps, because
the return itineraries of Lemma~\ref{lem:selfind}(b) spend
at most two consecutive steps outside $K_{3}$. Hence $F$ is
a factor of $C(\theta_{3})$ for some $\theta_{3}\in K_{3}$,
and $C(\theta_{3})=\sigma(C(\theta_{4}))$ with
$\theta_{4}=\psi^{-1}(\theta_{3})$
by~\eqref{eq:conj_coding}. Every factor of length $\ell$ of
$\sigma(W)$ is contained in $\sigma(F')$ for a factor $F'$
of $W$ of length at most $\lceil\ell/2\rceil+1<\ell$, since
each letter image has length at least two. By induction on
$\ell$, the factor $F'$ is a factor of
$\sigma^{\infty}(3)$, hence so is $F$.

The letter frequencies are computed in
Proposition~\ref{prop:defect_substitution}, and the
normalized Perron eigenvector of $M$ is
$(\sqrt{2}-1,\,3-2\sqrt{2},\,\sqrt{2}-1)$ by a direct
computation.
\end{proof}

\begin{corollary}\label{cor:343}
In the gap sequence of~$\mathcal{D}$, the factors of length
two are exactly $34$, $37$, $43$, $73$, and $77$. In
particular, every occurrence of the letter~$4$ is preceded
and followed by the letter~$3$.
\end{corollary}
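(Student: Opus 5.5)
The statement to prove is Corollary~\ref{cor:343}: the length-two factors of the gap sequence are exactly $34,37,43,73,77$, and every $4$ is preceded and followed by~$3$. I would read both inclusions off the first-return dynamics of $T$ on $J=K_{3}\sqcup K_{7}\sqcup K_{4}$ from Proposition~\ref{prop:defect_substitution}.

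\emph{The five words contain all factors.} The gap sequence is the itinerary $C(\theta^{*})$ of the first orbit point $\theta^{*}\in J$. A factor $ij$ occurs exactly when some $T^{k}\theta^{*}$ lies in $K_{i}\cap T^{-1}K_{j}$. The proof of Theorem~\ref{thm:defect_subst} already computes the images of the pieces:
\[
T(K_{3})=[2c-1,\tfrac12]\subset K_{7}\cup K_{4},\qquad T(K_{4})\subset K_{3},\qquad T(K_{7})\subset K_{3}\cup K_{7}.
\]
This excludes $33$, $44$, $47$ and $74$. So every length-two factor lies in $\{34,37,43,73,77\}$.

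\emph{Each of the five words actually occurs.} For each $ij$ in the list, the set $U_{ij}=K_{i}\cap T^{-1}K_{j}$ is a translate of $T(K_{i})\cap K_{j}$. So it suffices to show that $T(K_{i})\cap K_{j}$ has nonempty interior. I would check this with the explicit endpoints in $\mathbb{Q}(\sqrt2)$:
\begin{itemize}
\item $T(K_{3})=[2c-1,\tfrac12]$ straddles the point $4-5c$, since $2c-1<4-5c<\tfrac12$. It therefore meets both $K_{7}$ and $K_{4}$ in intervals of positive length.
\item $T(K_{4})=[1-c,4c-\tfrac52]$ is a nondegenerate subinterval of $K_{3}$.
\item $T(K_{7})=(4c-\tfrac52,2c-1)$ straddles $\tfrac52-3c$, since $4c-\tfrac52<\tfrac52-3c<2c-1$ numerically ($0.328<0.379<0.414$). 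It therefore meets both $K_{3}$ and $K_{7}$ in open intervals.
\end{itemize}
The orbit $(T^{k}\theta^{*})_{k\ge0}$ is the sequence of visits of the equidistributed Weyl orbit $(\theta_{n-1})$ to~$J$. Hence it visits each open set $U_{ij}$ infinitely often, and each of the five words occurs.

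An alternative route is to use Theorem~\ref{thm:defect_subst}, which puts the gap sequence in the minimal subshift $X_{\sigma}$, and to read the length-two factors of $\sigma^{3}(3)$ directly. The equidistribution argument is cleaner, though, because it avoids needing minimality to transfer occurrences.

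\emph{The consequence for the letter~$4$.} In the list, the only word ending in $4$ is $34$ and the only word starting with $4$ is $43$. So every occurrence of $4$ is preceded and followed by~$3$. The one exception is an initial letter $4$, which has no predecessor. To rule this out I would compute $\theta^{*}$ (the first defect is $n=6$, so $\theta^{*}=\theta_{5}$) and check that $\theta_{5}\notin K_{4}$. Alternatively, the statement can be read as concerning interior occurrences.

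\emph{Main obstacle.} The step I expect to need the most care is the strict inequalities showing that the intersections have nonempty interior, especially $4c-\tfrac52<\tfrac52-3c$. Each of these reduces to a comparison such as $7c<5$, i.e.\ $49<50$, which is exact integer arithmetic.
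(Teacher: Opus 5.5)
Your argument is correct. The exclusion half is exactly the pair analysis the paper cites from the proof of Theorem~\ref{thm:defect_subst}: reading off from $T(K_{3})$, $T(K_{4})$, $T(K_{7})$ that $33$, $44$, $47$, $74$ cannot occur.

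The occurrence half is where you genuinely differ. The paper deduces it from three ingredients:
\begin{itemize}
\item Theorem~\ref{thm:defect_subst}, which puts the gap sequence in $X_{\sigma}$;
\item minimality of $X_{\sigma}$ for the primitive $\sigma$;
\item the fact that all five words appear in $\sigma^{3}(3)=343734377$.
\end{itemize}
You instead observe that $K_{i}\cap T^{-1}K_{j}$ is a translate of $T(K_{i})\cap K_{j}$, which has nonempty interior; every required strict inequality reduces to $49<50$. You then note that the dense Weyl orbit, whose visits to $J$ are precisely the points $T^{k}\theta^{*}$, enters each such set infinitely often.

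Your route uses only the first-return partition of Proposition~\ref{prop:defect_substitution}. It is therefore independent of the self-induction in Lemma~\ref{lem:selfind} and of any minimality argument. It also yields the frequency $|K_{i}\cap T^{-1}K_{j}|/|J|$ of each two-letter factor. The paper's route costs one line once the substitution theorem is available.

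You are also right to flag the initial-letter case, which the paper's proof passes over, and your proposed check succeeds. One has $\theta^{*}=\theta_{5}=9c-6\approx 0.364$, which lies in $K_{3}=[1-c,\,\tfrac52-3c]$; the upper bound reduces to $288<289$. So the gap sequence begins with $3$, consistent with $9-6=3$, and no $4$ stands in initial position.
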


\begin{proof}
The inclusion is the pair analysis in the proof of
Theorem~\ref{thm:defect_subst}. Each of the five words
occurs in $\sigma^{3}(3)=343734377$,
for~$\sigma$ as in~\eqref{eq:sigma}, and $X_{\sigma}$ is
minimal because $\sigma$ is primitive, so every element of
$X_{\sigma}$ has the same factor set, and the five words
occur in the gap sequence.
\end{proof}

\begin{remark}
Identity~\eqref{eq:conj_coding} is an exact self-induction.
The first-return map of $T$ on $K_{3}$ is conjugate to $T$
by the contraction $\psi$ of ratio
$\sqrt{2}-1=1/(1+\sqrt{2})$, so the expansion factor is the
Perron eigenvalue $1+\sqrt{2}$, the
silver ratio of the Pell recurrence of
Section~\ref{sec:pell_ostrowski}. The transition
computations agree with the gap sequence of $\mathcal{D}$
computed to $2\cdot 10^{6}$ terms.
\end{remark}

\begin{remark}
The three gap values $3,4,7$ are exactly
$q_{2}$, $q_{3}-q_{2}$, and $q_{3}$ in the sequence
$(q_{k})$ of convergent denominators, and the Perron eigenvalue
$1+\sqrt{2}$ of~$\sigma$ is the dominant root of
the Pell recurrence $X_{n+1}=2X_{n}+X_{n-1}$. Both
facts
instantiate the Pell--Ostrowski structure described
in Section~\ref{sec:pell_ostrowski}.
\end{remark}

\subsection{A two-letter coarsening}

Encoding the gap values $3$ and $4$ as a letter
$\mathtt{S}$ and the gap value $7$ as a letter $\mathtt{L}$
yields a two-letter sequence with frequencies $2-\sqrt{2}$
for~$\mathtt{S}$ and $\sqrt{2}-1$ for~$\mathtt{L}$. This
sequence is not Sturmian. The words $\mathtt{SS}$ and
$\mathtt{LL}$ both occur, by Corollary~\ref{cor:343} applied
to the factors $34$ and $77$, so two factors of length two
differ by two occurrences of~$\mathtt{L}$, and the sequence
is not balanced.
A direct computation from Theorem~\ref{thm:defect_subst}
gives factor complexity
\[
  p(\ell)=2\ell+1\qquad(3\le\ell\le 15).
\]
These initial values are compatible with the linear
complexity expected for the coding of the return map $T$ by
the disconnected atom $K_{3}\cup K_{4}$, whose parts are not
unions of adjacent intervals. We do not claim that the formula
$p(\ell)=2\ell+1$ holds for every~$\ell$.

\section{Open problems}\label{sec:open}
\begin{enumerate}
\item Is the affine solution~\eqref{eq:fsol}
  the unique continuous
  monotone solution of the continuous
  equation~\eqref{eq:continuous}?
  By Theorem~\ref{thm:continuous}, any such
  solution is a homeomorphism satisfying
  $\varphi^{-1}(x)=\sum_{j=0}^{r-1}\varphi(x{-}j)$.
\item The admissible shifts $W_{r}^{(k)}$ of the iterated
  equation $(E_{k})$ form an increasing chain
  $W_{r}^{(1)}\subseteq W_{r}^{(2)}\subseteq\cdots$
  (Corollary~\ref{cor:chain}). For non-square~$r$ one has
  $W_{r}^{(1)}=J_{r}$, while $W_{2}^{(2)}=I$ and
  $W_{3}^{(2)}=I_{3}$. Is each
  $W_{r}^{(k)}$ an interval? Does the chain become constant
  after finitely many steps, and if so, can the eventual set
  $W_{r}^{(\infty)}=\bigcup_{k\ge 1}W_{r}^{(k)}$ and the index
  at which it is reached be given in closed form? By
  Proposition~\ref{prop:composed} these questions ask for
  which shifts $a^{\circ(k-1)}$ takes the same value at $n$
  and at $n+\mu(n)$ for every~$n\ge r$.
\item Among nondecreasing solutions
  of~\eqref{eq:AG} for $r\ge 3$, classify all
  monotone solutions.  For $r=2$ the tree
  structure is described in~\cite{Cloitre_tree}.
  The general case is open.
\item In the Ostrowski numeration system
  for $c=[0;1,2,2,2,\ldots]$
  (see~\cite{Shallit_book}),
  with partial quotients $b_{k}$, convergents $p_{k}/q_{k}$,
  and digits $0\le d_{k}\le b_{k+1}$ subject to the usual
  admissibility rule $d_{k}=b_{k+1}\Rightarrow d_{k-1}=0$,
  prove that
  $a_{\mathrm{B}}(n)=\sum d_{k}p_{k}$
  whenever $n=\sum d_{k}q_{k}$.
  This digit-swap property has been
  confirmed numerically for $n\le 10^{6}$
  and would give a carry-free interpretation
  of the equation
  $a_{\mathrm{B}}(a_{\mathrm{B}}(n)+a_{\mathrm{B}}(n{-}1))=n$.
\item Identify the analogue of
  Theorem~\ref{thm:defect_subst} for each
  $r\ge 3$. For $r=3$ the defect set at the
  right endpoint $d=(5\sqrt{3}{-}3)/6$ begins
\[
  16,\,23,\,42,\,49,\,61,\,68,\,87,\,94,\ldots
\]
  (\href{https://oeis.org/A395252}{\texttt{A395252}}), with
  numerically observed gap
  values $\{7,\,12,\,19\}$ corresponding to the
  consecutive convergent denominators $7$ and
  $19$ of~$1/\!\sqrt{3}$ and their difference.
  A primitive substitution on~$\{7,12,19\}$
  should govern the structure, with Perron
  eigenvalue related to~$\sqrt{3}$.
\end{enumerate}


\appendix

\section{Analytic oscillation estimates}
\label{app:analytic}

This appendix carries out the block estimates behind
Lemmas~\ref{lem:boundary} and~\ref{lem:middle}. Throughout,
$r=s^{2}+t$ is non-square with $s=\fl{\sqrt{r}}$,
$\alpha=\sqrt{r}$, $\beta=\alpha-s$, and $u=\fl{t/2}$, and
$E$, $H$, $P$ are as in Section~\ref{sec:universal}. By
Lemma~\ref{lem:blocks} and Lemma~\ref{lem:window}(a), the
bound $\max P-\min P<\alpha-1$ follows from
inequality~\eqref{eq:boundary_target}.

\begin{proof}[Proof of Lemma~\ref{lem:boundary}]
We evaluate $\operatorname{osc}E$ exactly in the four cases.

Suppose first that $t\le 2$. Then
$\beta=t/(s+\alpha)<1/s$, so $\frc{j\beta}=j\beta$ for
$0\le j\le s-1$ and $\frc{j\beta}=1+j\beta$ for
$-s\le j\le-1$. Summing over the window,
\[
  E(A)=(s-A)+\beta\,\frac{s(2A-s-1)}{2},
  \qquad 0\le A\le s,
\]
a linear function of $A$ with slope $s\beta-1<0$, so
$\operatorname{osc}E=s(1-s\beta)$. For $t=1$, where $u=0$,
inequality~\eqref{eq:boundary_target} reduces to
$\beta(s^{2}+1)>2$, that is, $s^{2}+1>2(s+\alpha)$. This
holds for $s\ge 5$ because $\alpha<s+1$, and for $s=4$
because $17>8+2\sqrt{17}$. The case $s=3$ is the excluded
value $r=10$. For $t=2$, where $u=1$,
inequality~\eqref{eq:boundary_target} reduces to
$\beta(s^{2}+2)>3$, that is, $2(s^{2}+2)>3(s+\alpha)$,
which holds for $s\ge 3$ because $\alpha<s+1$ and
$2s^{2}-6s+1>0$.

Suppose now that $t\ge 2s-1$, and set $\gamma=1-\beta$.
Then $s\gamma<1$. Indeed, for $t=2s$,
\[
  \gamma=(s+1)-\sqrt{s^{2}+2s}
  =\frac{1}{(s+1)+\sqrt{s^{2}+2s}}<\frac{1}{2s+1},
\]
and for $t=2s-1$,
$\gamma=2/\bigl((s+1)+\sqrt{s^{2}+2s-1}\bigr)<1/s$, using
$(s-1)^{2}<s^{2}+2s-1$. Since
$\frc{j\beta}=1-\frc{j\gamma}$ for $j\neq 0$, and the
window of $E(A)$ contains $j=0$ exactly when
$1\le A\le s$, the computation of the first case applied
to~$\gamma$ gives
\[
  \begin{aligned}
    E(A)&=(A-1)-\gamma\,\frac{s(2A-s-1)}{2}
    \quad(1\le A\le s),\\
    E(0)&=\sum_{j=1}^{s}j\gamma=\gamma\,\frac{s(s+1)}{2}.
  \end{aligned}
\]
On $[1,s]$ the function $E$ is linear with positive slope
$1-s\gamma$, and $E(0)-E(1)=\gamma s>0$, so the minimum is
$E(1)$ and
\[
  \operatorname{osc}E
  =\max\bigl(E(0),E(s)\bigr)-E(1)
  =\max\bigl(\gamma s,\;(s-1)(1-s\gamma)\bigr).
\]
For $t=2s-1$, where $u+1=s$,
inequality~\eqref{eq:boundary_target} splits into the two
branches $2\gamma s<s-1$, which holds because $\gamma s<1$
and $s\ge 3$, and
$(s-1)(1-s\gamma)+\gamma s<s-1$, which is equivalent to
$\gamma s(s-2)>0$. For $t=2s$, where $u+1=s+1$, the two
branches are $\gamma s+\gamma(s+1)<s-1$, which holds
because $\gamma(2s+1)<1\le s-1$, and
$(s-1)(1-s\gamma)+\gamma(s+1)<s-1$, which is equivalent to
$s+1<s(s-1)$, valid for $s\ge 3$.

In every case inequality~\eqref{eq:boundary_target} holds,
and Proposition~\ref{prop:oscillation} establishes the equation.
\end{proof}

The middle range uses the following block-deviation estimate.

\begin{lemma}\label{lem:blockdev}
Let $\gamma\in(0,1/2]$ be irrational, and partition the
integers into the blocks $\{j:\fl{j\gamma}=m\}$,
$m\in\mathbb{Z}$. Set
\[
  K(\gamma)=
  \begin{cases}
    1-\gamma, & 0<\gamma\le 1/3,\\[2pt]
    (1+\gamma)^{2}/(8\gamma), & 1/3\le\gamma\le 1/2.
  \end{cases}
\]
\begin{enumerate}
\item[\textup{(a)}] Each block is a set of consecutive
integers on which the values $\frc{j\gamma}$ form an
arithmetic progression with step $\gamma$, first value
$\delta\in[0,\gamma)$, and length $L$ satisfying
$\gamma L=1-\eta$ with $|\eta|<\gamma$ and
$\delta<\gamma+\eta$.
\item[\textup{(b)}] For every block,
$\bigl|\sum_{j\in\mathrm{block}}
(\frc{j\gamma}-\tfrac12)\bigr|\le K(\gamma)$.
\item[\textup{(c)}] For every set of $\ell$ consecutive
integers contained in one block,
$\bigl|\sum(\frc{j\gamma}-\tfrac12)\bigr|
\le(1+\gamma)^{2}/(8\gamma)$.
\end{enumerate}
\end{lemma}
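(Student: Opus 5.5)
Part (a) comes first, since (b) and (c) are estimates for sums of arithmetic progressions. In the block $m$, the integers $j$ with $\fl{j\gamma}=m$ are exactly those with $m/\gamma\le j<(m+1)/\gamma$. They are consecutive, and $\frc{j\gamma}=j\gamma-m$ increases by $\gamma$ at each step. The first value is $\delta=j_0\gamma-m$ with $j_0=\lceil m/\gamma\rceil$, so $\delta\in[0,\gamma)$. The last value $\delta+(L-1)\gamma$ is $<1$, while $\delta+L\gamma\ge1$. Writing $\gamma L=1-\eta$, these two inequalities become $\eta\ge\delta-\gamma+\ldots$, precisely $1-\gamma\le \delta+\gamma L-\gamma<1$ rearranged, which gives $\delta<\gamma+\eta$ and $\eta\le\delta<\gamma$. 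Irrationality excludes equality where needed, and $\eta>-\gamma$ holds since $\delta\ge0$, so $|\eta|<\gamma$.

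For (b) and (c) I write the sum over $\ell$ consecutive terms starting at value $x$ as
\[
F(x,\ell)=\sum_{i=0}^{\ell-1}\Bigl(x+i\gamma-\tfrac12\Bigr)=\ell\Bigl(x-\tfrac12\Bigr)+\gamma\frac{\ell(\ell-1)}{2}.
\]
This is the total of $\ell$ terms whose average is the midpoint $\bar x-\tfrac12$, where $\bar x=x+(\ell-1)\gamma/2$.

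For (c), all values lie in $[0,1)$ and the span $(\ell-1)\gamma$ is $<1$. Hence $|F|\le \ell\,\bigl(\tfrac12-\tfrac{(\ell-1)\gamma}{2}\bigr)$. This is the worst case when the progression hugs $0$ or $1$, up to a loss of at most one term's slack. I then maximize $g(\ell)=\ell(1-(\ell-1)\gamma)/2$ over real $\ell$. The maximum is at $\ell=(1+\gamma)/(2\gamma)$, with value $(1+\gamma)^2/(8\gamma)$. The care needed here is to check that the sup bound is attained in the limit and never exceeded, using the constraint that the values stay in $[0,1)$.

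For (b), the block is a full block, so (a) pins the position: $\bar x=\delta+(L-1)\gamma/2=\tfrac12+(\delta-\gamma/2-\eta/2)+\ldots$. Substituting $\gamma L=1-\eta$ gives $\sum=L(\delta-(\gamma+\eta)/2)$. With $\eta\le\delta<\gamma+\eta$, the factor satisfies $|\delta-(\gamma+\eta)/2|\le(\gamma+\eta)/2$. Therefore $|\sum|\le L(\gamma+\eta)/2=(1-\eta)(\gamma+\eta)/(2\gamma)$, to be maximized over admissible $\eta\in(-\gamma,\gamma)$. The constraint $\eta\le\delta<\gamma$, together with integrality of $L=(1-\eta)/\gamma$, restricts $\eta$. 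The unconstrained maximum is at $\eta=(1-\gamma)/2$, which gives $(1+\gamma)^2/(8\gamma)$ again. This value is admissible only if $(1-\gamma)/2<\gamma$, i.e.\ $\gamma>1/3$. For $\gamma\le1/3$ the maximum is at the boundary $\eta\to\gamma$, which gives $(1-\gamma)\cdot2\gamma/(2\gamma)=1-\gamma$.

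The main obstacle is this case split in (b): identifying exactly which $\eta$ are admissible, and checking that the two formulas match at $\gamma=1/3$. Both formulas equal $2/3$ there, so $K$ is continuous.
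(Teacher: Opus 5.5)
Your proposal is correct and follows the paper's proof essentially step for step. In (a) you use the same endpoint constraints on the block. In (b) you use the same identity $\sum=L\bigl(\delta-(\gamma+\eta)/2\bigr)$, bounded by $h(\eta)=(1-\eta)(\gamma+\eta)/(2\gamma)$ with the split at $\gamma=1/3$. In (c) you use the same mean-of-progression bound $\ell\bigl(1-(\ell-1)\gamma\bigr)/2$, maximized at $\ell=(1+\gamma)/(2\gamma)$.

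One small precision in (b): for $\eta<0$, the bound $|\delta-(\gamma+\eta)/2|\le(\gamma+\eta)/2$ needs $\delta\ge 0$ from (a). The condition $\eta\le\delta$ you cite would only give $(\gamma-\eta)/2$ there.
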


\begin{proof}
(a) The block with $\fl{j\gamma}=m$ is the integer
interval $[\lceil m/\gamma\rceil,\,\allowbreak
\lceil(m{+}1)/\gamma\rceil)$, on which
$\frc{j\gamma}=j\gamma-m$ increases with step $\gamma$
from $\delta=\lceil m/\gamma\rceil\gamma-m\in[0,\gamma)$.
The length satisfies $1/\gamma-1<L<1/\gamma+1$, so
$\eta=1-\gamma L\in(-\gamma,\gamma)$. The last value of
the block is $\delta+(L-1)\gamma<1$, which reads
$\delta<\gamma+\eta$.

(b) The block sum is
$L\delta+\gamma L(L-1)/2-L/2
=L\bigl(\delta-(\gamma+\eta)/2\bigr)$,
and $L=(1-\eta)/\gamma$. For $\eta\ge 0$ the value
$\delta$ ranges over $[0,\gamma)$, so the block sum is at
most $h(\eta):=(1-\eta)(\gamma+\eta)/(2\gamma)$
in absolute value. On $[0,\gamma)$ the function $h$
increases up to $\eta^{*}=(1-\gamma)/2$ and decreases
afterwards. If $\gamma\le 1/3$ then $\eta^{*}\ge\gamma$
and $h<h(\gamma)=1-\gamma$. If
$\gamma>1/3$ then
$h\le h(\eta^{*})=(1+\gamma)^{2}/(8\gamma)$.
For $\eta<0$ the value $\delta$ ranges over
$[0,\gamma+\eta)$, so the block sum is again at most
$h(\eta)$ in absolute value, and $h$ is
increasing on $(-\gamma,0)$, so
$h<h(0)=1/2\le K(\gamma)$.

(c) The $\ell$ values form an arithmetic progression with
step $\gamma$ inside $[0,1)$, so their mean $\bar{x}$
satisfies
$\gamma(\ell-1)/2\le\bar{x}<1-\gamma(\ell-1)/2$, and
the sum deviates from $\ell/2$ by at most
$(\ell/2)\bigl(1-\gamma(\ell-1)\bigr)$. As a function
of the real variable $\ell$, this expression is maximal at
$\ell=(1+\gamma)/(2\gamma)$, with value
$(1+\gamma)^{2}/(8\gamma)$.
\end{proof}

\begin{proof}[Proof of Lemma~\ref{lem:middle}]
Set $\gamma=\min(\beta,1-\beta)$. Both quantities are
bounded below on the middle range. Indeed
$\beta=t/(s+\alpha)\ge 3/(s+\alpha)>3/(2s+2)$, and
$1-\beta=(2s+1-t)/\bigl((s+1)+\alpha\bigr)
\ge 3/\bigl((s+1)+\alpha\bigr)>3/(2s+2)$, so
\begin{equation}\label{eq:gamma_lb}
  \frac{3}{2s+2}<\gamma\le\frac{1}{2}.
\end{equation}
The target inequality is~\eqref{eq:boundary_target}.

First, $\operatorname{osc}E$ is bounded through the blocks
of the $\gamma$-rotation. If $\beta>1/2$, then
$\frc{j\beta}=1-\frc{j\gamma}$ for $j\neq 0$, so
$E(A)=s-E_{\gamma}(A)-\chi(A)$, where $E_{\gamma}$
denotes the window sum of the $\gamma$-rotation and
$\chi(A)\in\{0,1\}$ records whether the window
contains $j=0$. Hence
$\operatorname{osc}E\le\operatorname{osc}E_{\gamma}+1$,
and this inequality also holds trivially when
$\beta\le 1/2$. Each window of length $s$ meets at most
$s/\fl{1/\gamma}$ full blocks, which are disjoint
intervals of length at least $\fl{1/\gamma}$, and at most
two partial blocks at its ends. By
Lemma~\ref{lem:blockdev},
\[
  \Bigl|E_{\gamma}(A)-\frac{s}{2}\Bigr|
  \le\frac{s}{\fl{1/\gamma}}\,K(\gamma)
  +2\,\frac{(1+\gamma)^{2}}{8\gamma},
\]
so
\begin{equation}\label{eq:oscE_bound}
  \operatorname{osc}E
  \le\frac{2sK(\gamma)}{\fl{1/\gamma}}
  +\frac{(1+\gamma)^{2}}{2\gamma}+1.
\end{equation}

Second, the term $(1-\beta)(u+1)$ is bounded by
\begin{equation}\label{eq:first_bound}
  (1-\beta)(u+1)
  \le(1-\beta)\Bigl(\frac{t}{2}+1\Bigr)
  =s\beta(1-\beta)+\frac{\beta^{2}(1-\beta)}{2}
  +(1-\beta)
  \le s\gamma(1-\gamma)+\frac{9}{8},
\end{equation}
using $t=\beta(2s+\beta)$,
$\beta(1-\beta)=\gamma(1-\gamma)$, and
$\beta^{2}(1-\beta)\le 4/27$.

Set $\Theta(\gamma)=2K(\gamma)/\fl{1/\gamma}
+\gamma(1-\gamma)$.
Combining \eqref{eq:oscE_bound}
and~\eqref{eq:first_bound}, the target
inequality~\eqref{eq:boundary_target} follows from
\begin{equation}\label{eq:final_middle}
  s\,\Theta(\gamma)+\frac{(1+\gamma)^{2}}{2\gamma}
  +\frac{25}{8}<s.
\end{equation}

Third, $\Theta(\gamma)<8/9$ for every
$\gamma\in(0,1/2]$. On $(1/3,1/2]$, where
$\fl{1/\gamma}=2$,
\[
  \Theta(\gamma)
  =\frac{(1+\gamma)^{2}}{8\gamma}+\gamma(1-\gamma),
  \qquad
  \Theta'(\gamma)
  =\frac{1}{8}-\frac{1}{8\gamma^{2}}+1-2\gamma<0,
\]
so $\Theta<\lim_{\gamma\to 1/3^{+}}\Theta=8/9$ there. On
$(1/4,1/3]$, where $\fl{1/\gamma}=3$,
$\Theta=(1-\gamma)(2/3+\gamma)\le 11/16$. For
$\gamma\le 1/4$, the bound
$\fl{1/\gamma}\ge(1-\gamma)/\gamma$ gives
$\Theta\le 2\gamma+\gamma(1-\gamma)\le 3\gamma\le 3/4$.

Finally, \eqref{eq:final_middle} is verified in two
ranges. For $\gamma\ge 1/8$, the map
$\gamma\mapsto(1+\gamma)^{2}/(2\gamma)$ is decreasing,
so the left-hand side of~\eqref{eq:final_middle} is less
than $\tfrac{8}{9}s+\tfrac{81}{16}+\tfrac{25}{8}$, and
$\tfrac{8}{9}s+\tfrac{131}{16}<s$ holds for
$s\ge 74$. For $\gamma<1/8$, we use $\Theta\le 3\gamma$
and $(1+\gamma)^{2}<81/64$
together with~\eqref{eq:gamma_lb}:
\[
  s\,\Theta+\frac{(1+\gamma)^{2}}{2\gamma}+\frac{25}{8}
  <\frac{3s}{8}+\frac{81}{128}\cdot\frac{2s+2}{3}
  +\frac{25}{8}
  =\frac{3s}{8}+\frac{27(s+1)}{64}+\frac{25}{8}<s,
\]
where the last inequality reduces to $13s>227$, valid for
$s\ge 18$. In both ranges
\eqref{eq:final_middle} holds, and
Proposition~\ref{prop:oscillation} establishes the equation.
\end{proof}

\section{Exact finite verification}
\label{app:finite}

This appendix describes the computation behind
Proposition~\ref{prop:finite}. For each non-square $r$ with
$3\le r\le 5473$, the walk values $P(j)$, $0\le j\le r-1$,
are elements $A+B\sqrt{r}$ of $\mathbb{Z}[\sqrt{r}]$. From
$P(j)=j\beta-C(j)$ with $\beta=\sqrt{r}-s$ one has
$A=-(js+C(j))$ and $B=j$, where
$C(j)=\#\{1\le k\le j:\frc{k/\!\sqrt{r}}<\beta\}$ is computed
without floating point. Writing $m_{k}=\fl{k/\!\sqrt{r}}$, one
has $\frc{k/\!\sqrt{r}}<\beta$ if and only if
$r-k>(s-m_{k})\sqrt{r}$, equivalently
$(r-k)^{2}>r(s-m_{k})^{2}$, with strict inequality because $r$
is not a square. Here $m_{k}$ is the unique integer with
$m_{k}^{2}r\le k^{2}<(m_{k}+1)^{2}r$. Two values
$A+B\sqrt{r}$ and $A'+B'\sqrt{r}$ are compared exactly.
Writing $(P,Q)=(A-A',B-B')$, the sign of $P+Q\sqrt{r}$ is read
from the signs of $P$ and $Q$, and when they differ from the
sign of $P^{2}-Q^{2}r$ interpreted according to the sign
of~$P$. If $P>0>Q$ then $P+Q\sqrt{r}>0$ exactly when
$P^{2}>Q^{2}r$, and if $P<0<Q$ the inequality reverses. No
floating-point arithmetic enters any decision.

Taking the maximum and minimum of the $P(j)$ by these
comparisons yields $\operatorname{osc}P=\max P-\min P$ as an
element of $\mathbb{Z}[\sqrt{r}]$, and
$\operatorname{osc}P<\sqrt{r}-1$ is decided by one further
comparison. Each of the $5399$ non-square orders is settled by
a finite sequence of integer comparisons, with no rounding.
The minimum over this range of the margin
$\sqrt{r}-1-\operatorname{osc}P$ is
\[
  3\sqrt{3}-5\approx 0.1962,
\]
attained at $r=3$ and at no other order in the range, so
every margin is positive. That
$3\sqrt{3}-5$ is the exact minimum is itself decided in
$\mathbb{Z}[\sqrt{r}]$. Writing a margin as
$M_{r}=A_{r}+B_{r}\sqrt{r}$, the comparison
$M_{r}\ge 3\sqrt{3}-5$ reduces, since $M_{r}+5>0$, to the sign
of $(A_{r}+5)^{2}+B_{r}^{2}r-27+2B_{r}(A_{r}+5)\sqrt{r}$.

The program \texttt{verify\_certificate.py} that performs
these comparisons accompanies the article as supplementary
material.

\section*{Funding}
The author received no funds, grants, or other support for this
work.

\section*{Data availability}
The integer-arithmetic program of Appendix~\ref{app:finite},
verifying Proposition~\ref{prop:finite} for $3\le r\le 5473$,
is provided with the submission as supplementary material. No
other data are associated with this article.

\section*{Statement on the use of AI}
During the preparation of this work, the author used Claude
(Opus~4.8, Anthropic) to assist with language editing,
manuscript organization, the drafting and checking of
mathematical arguments, and numerical and symbolic
verification. The author reviewed, edited, and independently
verified all definitions, statements, and proofs, and takes
full responsibility for the content of the article.

\end{document}